\documentclass{amsart}

\usepackage{amscd,amsmath,amssymb,amsfonts,bbm, calligra, xspace}
\usepackage[T1]{fontenc}
\usepackage{lmodern}
\usepackage{mathtools}
\usepackage{enumerate, enumitem}
\usepackage{multicol}
\usepackage[all]{xy}
\usepackage{mathrsfs}
\usepackage{footmisc}
\usepackage{yhmath}
\usepackage{stackrel}

\usepackage{xcolor}
\definecolor{darkblue}{rgb}{0,0,0.8}
\definecolor{darkgreen}{rgb}{0,0.4,0}
\usepackage[colorlinks=true,linkcolor=darkblue, citecolor=darkgreen,  urlcolor=darkgreen, unicode,pdfborder={0 0 0},final]{hyperref}

\newtheorem{thm}{Theorem}[section]
\newtheorem{prop}[thm]{Proposition}

\newtheorem{lem}[thm]{Lemma}
\newtheorem{cor}[thm]{Corollary}

\theoremstyle{definition}

\theoremstyle{remark}
\newtheorem{rem}[thm]{Remark}
\newtheorem{rems}[thm]{Remarks}

\numberwithin{equation}{section}

\newcommand{\Gr}{\mathrm{Gr}}
\newcommand{\op}{\mathrm{op}}
\newcommand{\res}{\mathrm{res}}
\newcommand{\sing}{\mathrm{sing}}
\newcommand{\cl}{\mathrm{cl}}
\newcommand{\colim}{\mathrm{colim}}

\newcommand{\RR}{\mathrm{R}}
\newcommand{\PGL}{\mathrm{PGL}}
\newcommand{\BU}{\mathrm{BU}}
\newcommand{\BGL}{\mathrm{BGL}}
\newcommand{\BSL}{\mathrm{BSL}}
\newcommand{\BPGL}{\mathrm{BPGL}}
\newcommand{\BSO}{\mathrm{BSO}}
\newcommand{\GL}{\mathrm{GL}}
\newcommand{\SL}{\mathrm{SL}}
\newcommand{\SO}{\mathrm{SO}}
\newcommand{\an}{\mathrm{an}}
\newcommand{\Tor}{\mathrm{Tor}}
\newcommand{\topo}{\mathrm{top}}
\newcommand{\et}{\mathrm{\acute{e}t}}

\newcommand{\Id}{\mathrm{Id}}

\newcommand{\Ima}{\mathrm{Im}}
\newcommand{\Hom}{\mathrm{Hom}}

\newcommand{\Spec}{\mathrm{Spec}}
\newcommand{\Br}{\mathrm{Br}}
\newcommand{\wBr}{\widetilde{\mathrm{Br}}}
\newcommand{\Pic}{\mathrm{Pic}}
\newcommand{\Sq}{\mathrm{Sq}}
\newcommand{\tors}{\mathrm{tors}}
\newcommand{\per}{\mathrm{per}}
\newcommand{\ind}{\mathrm{ind}}

\newcommand{\isoto}{\myxrightarrow{\,\sim\,}}
\makeatletter
\def\myrightarrow{{\setbox\z@\hbox{$\rightarrow$}\dimen0\ht\z@\multiply\dimen0 6\divide\dimen0 10\ht\z@\dimen0\box\z@}}
\def\myrightarrowfill@{\arrowfill@\relbar\relbar\myrightarrow}
\newcommand{\myxrightarrow}[2][]{\ext@arrow 0359\myrightarrowfill@{#1}{#2}}
\makeatother

\makeatletter
\newcommand{\extp}{\@ifnextchar^\@extp{\@extp^{\,}}}
\def\@extp^#1{\mathop{\bigwedge\nolimits^{\!#1}}}
\makeatother

\def\loccit{\emph{loc}.\kern3pt \emph{cit}.{}\xspace}
\def\eg{e.g.\kern.3em}

\def\resp {\text{resp.}\kern.3em}

\newcommand{\ci}{\mathcal{C}^{\infty}}

\def\Z{\mathbb Z}
\def\C{\mathbb C}

\def\G{\mathbb G}
\def\L{\mathbb L}

\def\Q{\mathbb Q}
\def\P{\mathbb P}
\def\R{\mathbb R}

\def\bS{\mathbb S}

\def\cA{\mathcal{A}}
\def\cB{\mathcal{B}}
\def\cC{\mathcal{C}}
\def\cN{\mathcal{N}}
\def\cM{\mathcal{M}}
\def\cL{\mathcal{L}}
\def\cO{\mathcal{O}}
\def\cE{\mathcal{E}}
\def\cF{\mathcal{F}}
\def\cG{\mathcal{G}}
\def\cH{\mathcal{H}}

\def\cK{\mathcal{K}}
\def\cM{\mathcal{M}}
\def\cP{\mathcal{P}}
\def\cQ{\mathcal{Q}}

\def\cU{\mathcal{U}}

\def\cEnd{\mathcal End}

\def\kp{\mathfrak{p}}

\def\km{\mathfrak{m}}

\def\kX{\mathfrak{X}}

\begin{document}

\title[The Brauer group of a Stein algebra]
{The Brauer group of a Stein algebra}

\author{Olivier Benoist}
\address{D\'epartement de math\'ematiques et applications, \'Ecole normale sup\'erieure, CNRS,
45 rue d'Ulm, 75230 Paris Cedex 05, France}
\email{olivier.benoist@ens.fr}

\author{James Hotchkiss}
\address{Department of Mathematics, Columbia University, New York, NY 10027}
\email{james.hotchkiss@columbia.edu}

\renewcommand{\abstractname}{Abstract}
\begin{abstract}
We investigate the Brauer group of the ring $\cO(S)$ of holomorphic functions on a finite-dimensional Stein space $S$. We provide a purely topological computation of this group and deduce a comparison theorem between the \'etale cohomology of $\Spec(\cO(S))$ and the singular cohomology of $S$ in degree~$2$. Furthermore, we prove a purity theorem when $S$ is nonsingular and study the index of classes in the Brauer group of $\cO(S)$.
\end{abstract}

\maketitle

\section{Introduction}
\label{intro}

\subsection{Stein spaces and their Stein algebras}

\textit{Stein spaces} are the complex-analytic analogues of affine algebraic varieties. They can be characterized among all complex spaces $S$ by the surjectivity of the restriction map $\cO(S)\to\cO(T)$ for all discrete subsets $T\subset S$, or, equivalently, by the vanishing of $H^k(S,\cF)$ for all coherent sheaves~$\cF$ on $S$ and all $k>0$ (see \eg \cite{GRStein}). A \textit{Stein algebra} is the~$\C$\nobreakdash-algebra~$\cO(S)$ of holomorphic functions on some Stein space~$S$.

In algebraic geometry, there is a correspondence (an equivalence of categories) between affine algebraic varieties $V$ over a field $K$ and finitely generated $K$-algebras~$R$ (given by $R=\cO(V)$ and $V=\Spec(R)$). Through this correspondence, geometric properties of $V$ translate into algebraic properties of $R$ and vice versa.

A similar equivalence of categories holds in complex-analytic geometry, between Stein spaces and holomorphic maps on the one hand, and Stein algebras and~$\C$\nobreakdash-al\-ge\-bra morphisms on the other hand (combine \cite[Satz 1]{Forster} and \cite[Theorem~5]{ForsterUniqueness} for finite-dimensional Stein spaces, and see \cite[Theorem~0.1]{Steinalgebra} in general). 

A major difference with the algebraic situation is that Stein algebras are complicated from a commutative algebra point of view (for instance, they are usually not noetherian). This makes it more difficult to relate their algebraic properties to the analytic or topological properties of the associated Stein spaces. An influential work in this direction is Forster's \cite{Forster}, where the relation between coherent sheaves on a Stein space $S$ and modules over the Stein algebra $\cO(S)$ is investigated.

The aim of the present article is to further contribute to this line of research by studying the Brauer group of a Stein algebra. Our theorems concerning the purely algebraic properties of $\cO(S)$ (such as Theorems \ref{th3}, \ref{th5} and \ref{th9}) are proved by exploiting results relating the commutative algebra of $\cO(S)$ to the geometry or the topology of $S$ (such as Corollary \ref{cor1} or Theorem \ref{th7}).

\subsection{Algebraic and topological Brauer groups}

Our first result shows that the Brauer group of a Stein algebra is of topological nature.

\begin{thm}[Propositions \ref{propcompa1} and \ref{propcompa2}]
\label{th1}
The natural morphism
\begin{equation}
\label{isoBr}
\Br(\cO(S))\to \Br_{\topo}(S)
\end{equation}
is an isomorphism for any finite-dimensional Stein space $S$.
\end{thm}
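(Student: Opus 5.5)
The plan is to reduce the statement to the Oka--Grauert principle, after translating Azumaya $\cO(S)$-algebras into holomorphic bundles of matrix algebras. Throughout, $\Br_{\topo}(S)$ is understood as the group of topological Azumaya algebras on $S$ (locally trivial bundles of matrix algebras $M_n(\C)$, i.e.\ topological $\PGL_n$-bundles) modulo Morita equivalence $\cA\sim\cA'$ iff $\cA\otimes\cEnd(E)\simeq\cA'\otimes\cEnd(E')$ for topological vector bundles $E,E'$. The morphism \eqref{isoBr} sends an Azumaya $\cO(S)$-algebra $A$ to the underlying topological bundle of algebras of the holomorphic bundle $\cA$ associated with $A$.

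\textbf{Step 1 (dictionary).} We use the Forster--Serre--Swan correspondence on finite-dimensional Stein spaces: $\Gamma(S,-)$ is an equivalence between holomorphic vector bundles of bounded rank on $S$ and finitely generated projective $\cO(S)$-modules. Finite dimensionality is what gives finite generation of the module of sections, and bounded rank is automatic for a finitely generated module even if $S$ has infinitely many components. Under this equivalence, $A$ becomes a holomorphic bundle of algebras $\cA$. The Azumaya condition $A\otimes A^{\op}\isoto \mathrm{End}(A)$ then implies that each fibre $\cA(s)$ is central simple over $\C$, hence isomorphic to some $M_n(\C)$. Because $\mathrm{Aut}(M_n(\C))=\PGL_n(\C)$ is a complex Lie group acting transitively, $\cA$ is locally trivial as a holomorphic bundle of algebras. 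Conversely, the global sections of a holomorphic $M_n$-bundle form an Azumaya algebra, since the comparison map is an isomorphism of vector bundles and hence of section modules. So, componentwise, Azumaya $\cO(S)$-algebras of degree $n$ correspond to holomorphic $\PGL_n$-bundles, and $\mathrm{End}(P)$ for $P$ projective corresponds to $\cEnd(E)$ for $E$ a holomorphic vector bundle.

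\textbf{Step 2 (surjectivity).} By Grauert's Oka principle, $H^1_{\mathrm{hol}}(S,\PGL_n)\to H^1_{\topo}(S,\PGL_n)$ is bijective for any complex Lie group, in particular for $\PGL_n$ and $\GL_n$. Given a topological Azumaya algebra, we may work component by component: rank is constant on each component, and gluing over components is harmless. The Oka principle endows it with a holomorphic structure, and taking sections via Step 1 yields a preimage in $\Br(\cO(S))$.

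\textbf{Step 3 (injectivity).} Suppose $A$ maps to $0$. Then there are topological vector bundles $E,E'$ with $\cA\otimes\cEnd(E)\simeq\cEnd(E')$ topologically. By Oka for $\GL$, we give $E$ and $E'$ holomorphic structures. Now $\cA\otimes\cEnd(E)$ and $\cEnd(E')$ are holomorphic $\PGL_N$-bundles that are topologically isomorphic, so by Oka for $\PGL_N$ they are holomorphically isomorphic. Applying $\Gamma$ gives $A\otimes\mathrm{End}(P)\simeq \mathrm{End}(P')$ with $P,P'$ finitely generated projective, so $[A]=0$.

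I expect the main obstacle to be Step 1. Two things need care there. First, the Azumaya condition over the non-noetherian ring $\cO(S)$ must be translated into fibrewise central simplicity, using that maximal ideals of points have residue field $\C$ and that the relevant modules are projective. Second, we must check that Forster's equivalence is compatible with tensor products and $\mathrm{End}$, and that it preserves finiteness across possibly infinitely many connected components. The Oka-principle steps themselves are formal once this dictionary is in place.
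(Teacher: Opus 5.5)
Your proposal is correct and follows the paper's route. Step~1 is Proposition~\ref{propcompa2}: the equivalence \eqref{monoidaleq} restricted to locally free sheaves of rank $r$ (whose monoidality gives compatibility with $\otimes$ and $\cEnd$), combined with Grothendieck's characterization of Azumaya algebras by $\cA\otimes\cA^{\op}\isoto\cEnd(\cA)$. Steps~2--3 are Proposition~\ref{propcompa1}: bijectivity of $H^1(S,\GL_r(\cO_S))\to H^1(S,\GL_r(\cC_S))$ and $H^1(S,\PGL_n(\cO_S))\to H^1(S,\PGL_n(\cC_S))$ by Grauert's Oka principle.

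One point to tighten is the holomorphic local triviality in Step~1. Arguing from fibrewise central simplicity and transitivity of $\PGL_n(\C)$ is only convincing for reduced $S$. In general, note that each stalk $\cA_s$ is Azumaya over the henselian local ring $\cO_{S,s}$ with residue field $\C$, hence isomorphic to $M_n(\cO_{S,s})$, and then spread this isomorphism out by coherence. This is what the paper's appeal to \cite[I, Remarque 5.12]{Grothbrauer} supplies.
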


Recall that the \textit{Brauer group} $\Br(R)$ of a ring $R$ was defined by Auslander and Goldman \cite{AG} to be the set of Morita equivalence classes of Azumaya algebras over $R$, endowed with the group law induced by tensor product, generalizing the definition of the Brauer group of a field $K$ as Morita equivalence classes of central simple algebras over $K$ (see \cite{Grothbrauer,CTS}). The \textit{topological Brauer group} $\Br_{\topo}(S)$ of a topological space $S$ is defined in a similar way in \cite[I, \S\S1.1--1.2]{Grothbrauer}, using Morita equivalence classes of topological Azumaya algebras on~$S$ (see \S\ref{pardefBrauer}).

To show that \eqref{isoBr} is an isomorphism, we exploit the analytic Brauer group of~$S$ based on holomorphic Azumaya algebras (see \cite[\S2]{HS}, \cite[\S 1]{Schroeranalytic} or \S\ref{pardefBrauer}). We identify the topological and the analytic Brauer groups of $S$ using Grauert's Oka principle~\cite{GrauertOka}, and the analytic Brauer group of $S$ with the algebraic Brauer group of~$\cO(S)$ by means of Forster's aforementioned results~\cite{Forster} as developed in~\cite[Proposition 2.5, Remark~2.6]{Steinsurface}. The first half of the argument appears in the literature (see \eg \cite[Corollary 4.3]{AW1} or \cite{Sridharan}); the second half is new.

The hypothesis that $S$ is finite-dimensional in Theorem \ref{th1} is essential both for the injectivity and the surjectivity of \eqref{isoBr} (see Propositions \ref{propnotinj} and  \ref{propnotsurj}).

\subsection{Cohomological interpretations of Brauer groups}

Grothen\-dieck constructed a canonical injection
\begin{equation}
\label{BrH2}
\Br(R)\hookrightarrow H^2_{\et}(\Spec(R),\G_m)
\end{equation}
of the Brauer group of a ring $R$ into the second \'etale cohomology group of $\Spec(R)$ with invertible coefficients (see \cite[I, (2.1)]{Grothbrauer}).
Gabber identified the image of~\eqref{BrH2} as the torsion subgroup of~$H^2_{\et}(\Spec(R),\G_m)$ (\cite[Chap.\,II, Theorem~1]{Gabber}, see also the more general~\cite{deJongGabber}).
This yields a cohomological interpretation for the left-hand side of \eqref{isoBr}.

On the topological side, Serre showed that the topological Brauer group $\Br_{\topo}(T)$  of a finite CW complex~$T$ is naturally isomorphic to $H^3(S,\Z)_{\tors}$ (see \cite[I, Corollaire~1.7]{Grothbrauer}). To provide a topological interpretation for the right-hand side of~\eqref{isoBr}, we extend Serre's computation to finite-dimensional CW complexes.

\begin{thm}[Theorem \ref{th2+}]
\label{th2}
Let $T$ be a finite-dimensional CW complex. Then the canonical morphism $\Br_{\topo}(T)\to  H^3(T,\Z)_{\tors}$ is an isomorphism.
\end{thm}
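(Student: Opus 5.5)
Let $T$ be a finite-dimensional CW complex of dimension $d$. The plan is to run Serre's argument, replacing compactness by the dimension bound. Recall the setup. A topological Azumaya algebra of degree $n$ is a locally trivial bundle of algebras $M_n(\C)$, hence a principal $\PGL_n(\C)$-bundle. The canonical morphism sends it to the image of its class under
\[
H^1(T,\PGL_n)\to H^2(T,\cO^*_{\topo})\cong H^3(T,\Z),
\]
where $\cO^*_{\topo}$ is the sheaf of continuous $\C^*$-valued functions. Equivalently, it is obtained from the coboundary $H^2(T,\Z/n)\to H^3(T,\Z)$ composed with $B\PGL_n\to K(\Z/n,2)$. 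This gives a well-defined homomorphism $\Br_{\topo}(T)\to H^3(T,\Z)$ landing in the $n$-torsion.

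\textbf{Injectivity.} Suppose $A$ has degree $n$ and trivial class. Then the $\PGL_n$-bundle lifts to a $\GL_n$-bundle $E$, so $A\cong \cEnd(E)$. This uses that the class in $H^2(T,\cO^*_{\topo})$ is exactly the obstruction to lifting along $1\to\C^*\to\GL_n\to\PGL_n\to1$, which holds on paracompact spaces, and CW complexes are paracompact. Hence $A$ is Morita trivial. The same argument applied to $A\otimes B^{\op}$ shows that $A$ and $B$ with equal classes are Morita equivalent. No dimension hypothesis is needed here.

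\textbf{Surjectivity.} This is the main step. Fix $\alpha\in H^3(T,\Z)$ with $n\alpha=0$, and lift it to $\beta\in H^2(T,\Z/n)$. We must find $m$ and a $\PGL_{m}$-bundle whose class in $H^3$ is $\alpha$. Serre's method is to use the maps $B\PGL_{nk}\to K(\Z/n,2)$ (via $\Z/nk\to\Z/n$, or more precisely via $B\mathrm{SL}_{n^k}/\mu_{n^k}$ and $\Z/n^k\to\Z/n$) and show they become highly connected as $k$ grows, up to a group that can be killed. Concretely, I would consider the map
\[
f_k\colon B\PGL_{n^k}\to K(\Z/n^k,2)
\]
classifying $w$. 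Its homotopy fiber is $B\SL_{n^k}$ modulo the center, roughly $B\SU_{n^k}$. This is $3$-connected, but its higher homotopy groups are nonzero, so there are obstructions to lifting a map $T\to K(\Z/n^k,2)$. These obstructions lie in $H^{j+1}(T,\pi_j(\text{fiber}))$ for $4\le j\le d$, i.e.\ in groups like $H^{j+1}(T,\pi_{j-1}(\SU))$.

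The key input from Serre's argument is that the obstruction classes are torsion, killed by a power of $n$. Rationally, $B\PGL_m\to K(\Z/m,2)$ has fiber whose obstructions come from characteristic classes, and these can be absorbed by passing from $\beta$ to its image under $\Z/n^k\to\Z/n^{k+l}$ (multiplication by $n^l$ on the level of Brauer classes, i.e.\ replacing the target degree). Because $T$ has dimension $d$, only finitely many obstruction stages $j\le d$ occur. At each stage the obstruction can be killed by enlarging $k$ by a bounded amount: the transition maps multiply the obstruction by a power of $n$, and the obstruction group, being of torsion exponent dividing a power of $n$ times a universal constant, is killed.

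I would implement this by induction on skeleta. Suppose we have a $\PGL_{n^k}$-bundle on $T^{(j)}$ realizing $\beta$. Then its extension obstruction to $T^{(j+1)}$ lies in $H^{j+1}(T,\pi_j(B\PGL_{n^k}))$ modulo the ambiguity. Passing to $\PGL_{n^{k+l}}$ via tensoring with $M_{n^l}$ multiplies this by a power of $n$, killing it. In finite dimension we stop after $d$ steps. Dimension is exactly what guarantees that the degree stays bounded: in infinite dimension the degree would need to go to infinity.

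The main obstacle is making the claim "tensoring with $M_{n^l}$ kills the obstruction" precise. To do so I would use the stable range, where $\pi_j(B\PGL_m)=\pi_{j-1}(U)$ is $\Z$ or $0$ for $m$ large, together with the effect of the diagonal map $\PGL_m\to\PGL_{ml}$ on $\pi_{2i}$. That effect is multiplication by $l$ on stable homotopy of $BU$ in the relevant sense, using Bott periodicity. If this clean route fails, the fallback is Serre's original trick for finite complexes, which only used finitely many cells per dimension in a way that generalizes: apply it to the obstruction theory, which is local in dimension, not in cell count.
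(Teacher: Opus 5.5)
Your injectivity argument is fine and agrees with the paper's. Paracompactness is really needed only for the identification $H^2(T,\cC_T^{\times})\cong H^3(T,\Z)$, via fineness of $\cC_T$; the lifting obstruction for $\GL_n\to\PGL_n$ is formal.

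The gap is in surjectivity, exactly at the step you call the key input. You run the obstruction theory over the skeleta of $T$ itself, so the obstructions live in $H^{j+1}(T,\pi_j(\BSL_m(\C)))$. (Note that the homotopy fiber of $\BPGL_m(\C)\to K(\Z/m,2)$ is $\BSL_m(\C)$.) For $m$ large and $j$ even, this group is $H^{j+1}(T,\Z)$. For an infinite, finite-dimensional CW complex these groups are arbitrary; in particular they need not be torsion. So your assertion that the obstruction group has torsion exponent dividing a power of $n$ times a universal constant is false.

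What you would need is that each particular obstruction class is $n$-primary torsion, and nothing in the proposal shows this. In fact it fails for careless choices. If $n$ is odd, twisting a partial lift by $x\in H^4(T,\Z)$ shifts the obstruction in $H^7(T,\Z)$, modulo $n$-primary torsion, by the integral Bockstein of $\Sq^2 x$. This term comes from the first $k$-invariant of $\BSL_m(\C)$; it is $2$-torsion, so tensoring with $M_{n^l}(\C)$ never kills it. It is correct that $\BPGL_{n^k}(\C)\to\BPGL_{n^{k+l}}(\C)$ acts by $n^l$ on the relevant stable homotopy groups, but this only helps once the obstruction is known to be $n$-primary torsion. Your fallback does not help either: $T$ need not have finitely many cells in any dimension.

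The missing idea is to move the problem to a universal, finite situation. Classify $\beta$ by a map $T\to K(\Z/n,2)$, using a model of $K(\Z/n,2)$ with finitely many cells in each dimension. By cellular approximation, push this map into the $d$-skeleton $K(\Z/n,2)_{\le d}$, which is a finite complex. Serre's theorem on that finite complex gives a topological Azumaya algebra whose class is the restriction of the universal class $\partial(\beta_0)$. Its pullback to $T$ then has class $\alpha$. This is the paper's proof, and finite-dimensionality enters only through the cellular approximation.

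If you want to keep your obstruction-theoretic scheme, run it over $K(\Z/n,2)$ instead of over $T$. For $j\geq 4$ the groups $H^{j+1}(K(\Z/n,2),\Z)$ are finite of $n$-power order, so at each stage, pushing the partial section to a high enough level $n^{k+l}$ kills the obstruction. After the finitely many stages $j<d$ you obtain a section over $K(\Z/n,2)_{\le d}$ at some finite level $n^k$. You then pull it back along the compressed classifying map of $\beta$.
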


The finite-dimensionality assumption in Theorem \ref{th2} cannot be dispensed with (see~\cite[Corollary~5.10]{AW1} or~\cite{HornS}). Combining Theorems \ref{th1} and \ref{th2} shows that the Brauer group of a Stein algebra can be computed purely topologically.

\begin{thm}[Theorem \ref{cor1+}]
\label{cor1}
The natural morphism
$$\Br(\cO(S))\to H^3(S,\Z)_{\tors}$$
is an isomorphism for any finite-dimensional Stein space $S$.
\end{thm}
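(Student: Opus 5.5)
The plan is to obtain the statement as the composite of two isomorphisms already established. Theorem \ref{th1} gives the isomorphism $\Br(\cO(S))\to\Br_{\topo}(S)$, and Theorem \ref{th2} gives $\Br_{\topo}(T)\isoto H^3(T,\Z)_{\tors}$ for finite-dimensional CW complexes $T$. Accordingly, the proof will have three steps. First, fix what "natural morphism" means: it is the composite of \eqref{isoBr} with Serre's canonical map. The latter sends a topological Azumaya algebra of rank $n^2$, viewed as a $\PGL_n(\C)$-torsor, to the image of its class under the boundary $H^1(S,\PGL_n(\C))\to H^2(S,\mathbb{G}_m^{\topo})\cong H^3(S,\Z)$, which is $n$-torsion. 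Second, check that $S$ satisfies the hypothesis of Theorem \ref{th2}. Third, conclude that the composite of two isomorphisms is an isomorphism.

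The only real input is the second step. I would use the classical fact that a Stein space $S$ of dimension $n$ admits a triangulation, so it is homeomorphic to a locally finite simplicial complex (\L ojasiewicz, Giesecke). A complex-analytic space of finite dimension has real dimension at most $2n$, so the triangulation is a finite-dimensional simplicial complex, and hence a finite-dimensional CW complex.

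There is one subtlety to address. Theorem \ref{th2} concerns the CW structure, whereas $\Br_{\topo}(S)$ and $H^3(S,\Z)$ are defined for the space $S$ with its analytic topology. A triangulation is a homeomorphism, so both sides are transported along it and the isomorphism carries over verbatim. Naturality of Serre's map under homeomorphisms is clear, since it is defined through sheaf cohomology, which agrees with singular cohomology on CW complexes.

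I expect no step to be a genuine obstacle, since all the substance lies in Theorems \ref{th1} and \ref{th2}. The only thing to handle with care is making sure the two descriptions of the map agree: the one used in Theorem \ref{th2} (via classifying spaces or \v{C}ech cocycles) and the one implicit in the phrase "natural morphism" in the present statement. If they were defined differently, I would verify compatibility by computing both on the $\PGL_n$-torsor associated with an Azumaya algebra.
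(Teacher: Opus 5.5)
Your proposal is correct and follows the paper's argument: the paper composes Propositions \ref{propcompa1} and \ref{propcompa2} (that is, Theorem \ref{th1}) with Theorem \ref{th2+}. The only difference is the topological input. The paper cites Hamm's theorem that $S$ has the homotopy type of a CW complex of dimension $\leq \dim S$, whereas you use a triangulation, a homeomorphism onto a simplicial complex of real dimension $\leq 2\dim S$. This suffices here since only finite-dimensionality matters, and it even spares you the homotopy invariance of $\Br_{\topo}$ and of sheaf cohomology that the paper's route uses tacitly.
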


\subsection{Degree \texorpdfstring{$2$}{2} cohomology with finite coefficients}

Let us present an application of Theorems \ref{th1} and \ref{th2}.
For any Stein space $S$, any~$k\geq 0$, and any $n\geq 1$, one can consider the comparison morphism
\begin{equation}
\label{comparison}
H^k_{\et}(\Spec(\cO(S)),\Z/n)\to H^k(S,\Z/n)
\end{equation}
between the \'etale cohomology of $\Spec(\cO(S))$ and the singular cohomology of $S$. The question whether \eqref{comparison} is an isomorphism when $S$ is finite-dimensional was raised in \cite[Remark 6.7 (v)]{Steinsurface} and proved there if $k=0$ or $k=1$ (see also \cite[Theorem 1.5]{Stein} for a positive answer for all $k\geq 0$ in the easier but related setting of Stein compacta). We answer this question for $k=2$.

\begin{thm}[Theorem \ref{th6+}]
\label{th6}
The natural morphism
\begin{equation}
\label{isoH2}
H^2_{\et}(\Spec(\cO(S)),\Z/n)\to H^2(S,\Z/n)
\end{equation}
is an isomorphism for any finite-dimensional Stein space $S$ and any $n\geq 1$.
\end{thm}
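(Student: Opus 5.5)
The plan is to compare the Kummer exact sequences on both sides and apply the five lemma. On the étale side, the Kummer sequence $0\to\mu_n\to\G_m\xrightarrow{n}\G_m\to 0$ on $\Spec(\cO(S))$ is exact, since $n$ is invertible in $\cO(S)\supset\C$. Because $\C\subset\cO(S)$, we may identify $\mu_n\simeq\Z/n$. This gives
\[
0\to \Pic(\cO(S))/n\to H^2_{\et}(\Spec(\cO(S)),\Z/n)\to H^2_{\et}(\Spec(\cO(S)),\G_m)[n]\to 0 .
\]
Grothendieck's injection \eqref{BrH2} and Gabber's theorem together identify the torsion subgroup of $H^2_{\et}(\Spec(\cO(S)),\G_m)$ with $\Br(\cO(S))$. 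Hence the right-hand term is $\Br(\cO(S))[n]$.

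On the topological side, the analogous sequence comes from $0\to\Z\xrightarrow{n}\Z\to\Z/n\to0$:
\[
0\to H^2(S,\Z)/n\to H^2(S,\Z/n)\to H^3(S,\Z)[n]\to 0 .
\]
Alternatively one can use the sequence $0\to\mu_n\to\C^*\xrightarrow{n}\C^*\to 0$ of sheaves of continuous functions on $S$, together with $\Br_{\topo}(S)$. In that version the outer terms are $\mathrm{Vect}_1^{\topo}(S)/n$ and $\Br_{\topo}(S)[n]$.

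I will construct a morphism between the two short exact sequences that is compatible with \eqref{isoH2}. The natural route passes through the analytic site: the comparison maps $\Spec(\cO(S))_{\et}\leftarrow S_{\an}\to S_{\topo}$ are compatible with the Kummer sequences for $\G_m$, $\cO_S^*$ and $\C^*_{S}$, respectively. Compatibility of the boundary maps then follows from naturality. The outer comparison maps are as follows.

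(i) On Picard groups, $\Pic(\cO(S))\to H^1(S,\cO_S^*)\to H^2(S,\Z)$. The first arrow is an isomorphism by Forster's correspondence between finitely generated projective $\cO(S)$-modules and holomorphic vector bundles (as in \cite[Proposition 2.5]{Steinsurface}). The second is an isomorphism by the exponential sequence together with $H^k(S,\cO_S)=0$ for $k>0$.

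(ii) On the Brauer side, $\Br(\cO(S))[n]\to H^3(S,\Z)[n]$ is an isomorphism by Theorem \ref{cor1} (equivalently, Theorems \ref{th1} and \ref{th2}).

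The five lemma then gives the isomorphism \eqref{isoH2}.

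The main obstacle is checking that the diagram commutes, and in particular that the map $H^2_{\et}(\Spec(\cO(S)),\G_m)_{\tors}\to H^3(S,\Z)$ induced through cohomology agrees, up to sign, with the composite $\Br(\cO(S))\to\Br_{\topo}(S)\to H^3(S,\Z)_{\tors}$ used in Theorem \ref{cor1}. I would handle this by working with $\check{\mathrm{H}}^2$ and cocycles: an Azumaya algebra of degree $d$ gives a $\PGL_d$-torsor, and its image under the boundary of $1\to\mu_d\to\SL_d\to\PGL_d\to1$ or $1\to\G_m\to\GL_d\to\PGL_d\to1$ is compatible with pullback along $S_{\topo}\to S_{\an}\to\Spec(\cO(S))_{\et}$. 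This reduces the check to the definition of Serre's map $\Br_{\topo}\to H^3$ via $\C^*$-gerbes and the exponential sequence $H^2(S,\C^*_{\mathrm{cont}})\simeq H^3(S,\Z)$.

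A second delicate point is the existence of a morphism of sites from $S$ to $\Spec(\cO(S))_{\et}$. Étale $\cO(S)$-algebras $A$ must give analytic spaces étale over $S$, which can be done via $\Spec$ of finitely presented algebras: take the analytic space defined by the same equations in $S\times\C^m$. The rest is diagram bookkeeping.
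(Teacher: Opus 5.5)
Your proposal is correct and matches the paper's proof. Like the paper, you compare the Kummer sequence for $\G_m$ on $\Spec(\cO(S))_{\et}$ with the one for $\cC_S^{\times}$ on $S$, identify the left terms via line bundles (Lemma~\ref{lemlinebundles}) and the right terms via Gabber's theorem together with Theorem~\ref{cor1}, and conclude with the five lemma; the paper asserts the compatibility of the identifications in one line, so your attention to it adds care rather than a different route.
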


To deduce Theorem \ref{th6} from Theorem \ref{th1}, we relate the left-hand side of \eqref{isoH2} with $n$-torsion classes in $\Br(\cO(S))$ and the right-hand side of \eqref{isoH2} with $n$-torsion classes in $\Br_{\topo}(S)$. For these purposes, we respectively use Gabber's theorem that the image of the injection \eqref{BrH2} is the subgroup of torsion classes and~Theorem \ref{th2}.

Theorem \ref{th6} was our original motivation to study Brauer groups of Stein algebras. We do not know a proof of it that does not rely on a geometric incarnation of degree~$2$ cohomology classes by means of Azumaya algebras.

\subsection{Degree \texorpdfstring{$2$}{2} cohomology with invertible coefficients}

One can also consider, for any Stein space $S$ and any $k\geq 0$, a comparison morphism similar to \eqref{comparison} but with invertible coefficients:
\begin{equation}
\label{comparisonGm}
H^k_{\et}(\Spec(\cO(S)),\G_m)\to H^k(S,\cO_S^{\times}).
\end{equation}
The morphism \eqref{comparisonGm} is tautologically an isomorphism for $k=0$. It is also an isomorphism for $k=1$ when $S$ is finite-dimensional (see Lemma \ref{isolinebundles}). It is therefore natural to ask whether \eqref{comparisonGm} is always an isomorphism when $S$ is finite-dimensional. Note that, making use of Kummer exact sequences, this would imply that the morphisms \eqref{comparison} themselves are isomorphisms.

When $k=2$, we show that injectivity holds, although surjectivity fails in general.

\begin{thm}[Theorem \ref{th7+} and Corollary \ref{th7++}]
\label{th7}
The natural morphism
\begin{equation}
\label{compaGm}
H^2_{\et}(\Spec(\cO(S)),\G_m)\to H^2(S,\cO_S^{\times})
\end{equation}
is injective for any finite-dimensional Stein space $S$. There exists a connected Stein manifold such that \eqref{compaGm} is not surjective.
\end{thm}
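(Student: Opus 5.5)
The plan has two parts: injectivity, then a counterexample to surjectivity.

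\emph{Injectivity.} Set $X=\Spec(\cO(S))$. I would compare the low-degree exact sequences of the Leray spectral sequences for $X_{\et}$ and for $S$ with coefficients $\G_m$ and $\cO_S^\times$. A more hands-on route goes through torsion. First show that $H^2_{\et}(X,\G_m)$ is torsion. Let $\alpha$ be a class in the kernel of \eqref{compaGm}.

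To see that $\alpha$ is torsion, I would use the Kummer sequence $0\to\mu_n\to\G_m\xrightarrow{n}\G_m\to0$ on $X_{\et}$ and the exponential sequence on $S$. The natural route is to compare $H^2_{\et}(X,\G_m)$ with $H^2(S,\cO_S^\times)$ via the factorization $X_{\et}\leftarrow S$, where a morphism of sites sends étale $\cO(S)$-algebras to the associated Stein spaces. One then checks that the comparison on $H^1$ (Lemma \ref{isolinebundles}) and on $H^0$ are isomorphisms.

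The five lemma only yields injectivity on $H^2$ if we have a comparison of spectral sequences for a map of sites. So I would instead work with a Čech/descent description. A class in $H^2_{\et}(X,\G_m)$ trivializes on some étale cover of $X$. By a limit argument, this cover can be taken to come from finitely presented étale algebras, which are $\cO$ of Stein spaces étale over open subsets of $S$. Using Lemma \ref{isolinebundles} on these Stein spaces and their fiber products, the vanishing of the analytic class should then force the algebraic Čech cocycle to be a coboundary.

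Alternatively, and more cleanly, I would reduce to torsion classes. If $\alpha$ is $n$-torsion, it lifts to $H^2_{\et}(X,\mu_n)$, and Theorem \ref{th6} identifies this group with $H^2(S,\Z/n)$. Moreover $H^1(X,\G_m)/n\cong H^1(S,\cO^\times)/n$ by Lemma \ref{isolinebundles}. A diagram chase in the two Kummer sequences then gives injectivity on $n$-torsion. So the key point is to prove that $H^2_{\et}(X,\G_m)$ is torsion, or at least that every element of the kernel is torsion.

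\emph{Main obstacle.} The hard step is precisely showing that kernel elements are torsion. I would first try to use that $X$ is a normal-ish limit of schemes: pass to the reduced ring, then use a Gabber/Grothendieck-type argument that $H^2(X,\G_m)$ injects into the product over connected components of $H^2$ of a suitable localization. If that fails, I would show that an $\alpha$ killed analytically is killed on a Zariski cover by principal opens $D(f)$. On the analytic side these correspond to complements of hypersurfaces, which are again Stein, and one then bounds the torsion exponent using finite dimensionality.

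\emph{Non-surjectivity.} Here I would take a connected Stein manifold $S$ with $H^2(S,\cO_S^\times)$ containing non-torsion elements. The exponential sequence gives $H^2(S,\cO^\times)\cong H^3(S,\Z)$, since $H^k(S,\cO)=0$ for $k>0$, so it suffices to have a non-torsion element of $H^3(S,\Z)$. For instance, any Stein manifold of dimension at least $3$ homotopy equivalent to a CW complex with $H^3$ of rank $\ge1$ works, such as a Stein neighbourhood of a totally real $S^3$, i.e.\ the complexification of $S^3$. If the left-hand side were torsion, its image would be contained in $H^3(S,\Z)_{\tors}$, so surjectivity fails. This again needs the torsion statement. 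Failing that, I would choose $S$ so that the relevant direct limit argument visibly produces only torsion, e.g.\ by realizing every class as coming from finitely many étale covers.
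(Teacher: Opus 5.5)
\textbf{There is a genuine gap in the injectivity half.} Your torsion step is fine. The two Kummer sequences, Lemma~\ref{lemlinebundles} and Theorem~\ref{th6} do show that $H^2_{\et}(\Spec(\cO(S)),\G_m)[n]\to H^2(S,\cO_S^{\times})[n]$ is injective. The gap is the step you flag as the main obstacle, and your route to it cannot work. The group $H^2_{\et}(\Spec(\cO(S)),\G_m)$ is \emph{not} torsion in general: Proposition~\ref{propnontorsion} exhibits a connected normal $3$-dimensional Stein space (the complement of a hyperplane section of a nodal cubic threefold) with a non-torsion class. So injectivity for arbitrary finite-dimensional $S$ must handle non-torsion classes directly. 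The claim ``every kernel element is torsion'' is essentially the whole theorem, and you give no argument for it.

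Your fallback ideas do not supply one:
\begin{itemize}
\item The Gabber/Grothendieck idea of injecting into $H^2$ of $\cM(S)$ or of a localization fails for that example, since it would force torsionness.
\item Even in the locally factorial case (Theorem~\ref{th4+}), that route only yields $\alpha^{\an}=0$. It then needs exactly the injectivity you are trying to prove.
\item The \v{C}ech sketch stalls as well. The trivializing data of $\alpha^{\an}$ live on spaces $U^{\an}$ with $\cO(U^{\an})\neq\cO(U)$, e.g.\ $\cO(S\setminus\{f=0\})\neq\cO(S)[\frac{1}{f}]$. So Lemma~\ref{lemlinebundles} applied there algebraizes nothing on $U$.
\end{itemize}

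\textbf{The missing idea} is a geometric model for arbitrary, possibly non-torsion, classes. By Theorem~\ref{thBigBrauer} (Taylor--Raeburn, Heinloth--Schr\"oer), $\alpha=[\cA]$ for a finitely presented central separable algebra $\cA$ that is split \'etale-locally by finitely presented modules. If $\alpha^{\an}=0$, the gerbe of splittings of $\cA^{\an}$ is trivial, so $\cA^{\an}\cong\cM_S\otimes^{\lambda_S}_{\cO_S}\cN_S$ globally on $S$. Then:
\begin{itemize}
\item Lemma~\ref{lemgerbe} (local uniqueness of splittings) bounds the local presentations of $\cM_S$ and $\cN_S$ uniformly.
\item Kripke's theorem upgrades this to global finite presentation (Lemma~\ref{lemresolutions}).
\item The equivalence \eqref{monoidaleq} then algebraizes the splitting over $\Spec(\cO(S))$, so $\alpha=0$.
\end{itemize}
The point is that all algebraization happens on $S$ itself, where \eqref{monoidaleq} is available, not on \'etale covers.

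\textbf{Non-surjectivity.} Your example, the affine quadric homotopy equivalent to $\bS^3$, is the right one. But as you acknowledge, the argument needs $H^2_{\et}(\Spec(\cO(S)),\G_m)$ to be torsion for this manifold, and you do not establish it. In the paper this is Theorem~\ref{th4+}, which goes as follows:
\begin{itemize}
\item A class killed over $\cM(S)$ is killed over $\cO(S)[\frac{1}{a}]$ for some nonzerodivisor $a$.
\item An invertible twisted sheaf there extends to a finitely presented $\alpha$-twisted sheaf.
\item Its analytification has an invertible reflexive hull, by local factoriality, and this gives $\alpha^{\an}=0$.
\item Injectivity of \eqref{compaGm} then gives $\alpha=0$. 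Torsionness follows because $H^2_{\et}(\Spec(\cM(S)),\G_m)$ is Galois cohomology.
\end{itemize}
So both halves of your proposal ultimately rest on the injectivity argument that is missing.
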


To prove the injectivity assertion of Theorem \ref{th7}, we rely again on a geometric interpretation of degree $2$ cohomology classes. The Brauer group is not sufficient for this purpose, as it can only account for torsion classes in $H^2_{\et}(\Spec(\cO(S)),\G_m)$ (and if $S$ is singular, this group may not be torsion, see Proposition \ref{propnontorsion}). To overcome this difficulty, we make use of the \textit{bigger Brauer group}~$\wBr(R)$ of a ring $R$ introduced by  Taylor and Raeburn \cite{Taylor, RT} (with corrections by Caenepeel and Grandjean \cite{CG,Caenepeel}, see also \cite{Schroer, HeinS}), based on possibly non-unital
%Caenepell uses unitary and unital with distinct meanings.
\textit{central separable $R$-algebras} (instead of only Azumaya algebras), which satisfies~$\wBr(R)\isoto H^2_{\et}(\Spec(R),\G_m)$ (see \cite[Theorem~3.4]{CG}). 
%[Caenepell, Remark 4.3.10] claims: it is not known that central separable algebras are equivalent to finitely generated ones. If I understand correctly, one does not know whether they are étale-locally trivial because [Raeburn-Taylor, Lemma 2.1] has an incorrect proof, see [Caenepell-Grandjean, beginning of §3]. (Even the proof of [Caenepell, Proposition 3.2.2] requires finite generation and this has repercussions on [Caenepell, Proposition 3.3.5].) This is why [Heinloth, Schroer] write p.1191: "By taking the existence of splittings as defining property, and not as a consequence, we avoid the technical problems discussed in [2]". All this is made only more confusing by Caenepell's insistence that not making finite generation assumptions leads to set-theoretical problems.
We follow the point of view of Heinloth and Schr\"oer \cite{HeinS} (see \S\ref{parbigger}). We refer to Proposition~\ref{propinjalgan} for a counterpart of the first half of Theorem \ref{th7} in complex algebraic geometry.

The failure of surjectivity of \eqref{compaGm} in general was already noted by Raeburn and Taylor in \cite[p.\,462]{RT} (in the related setting of Stein compacta). The point is that, when~$S$ is a manifold, the left-hand side of \eqref{compaGm} is a torsion group (by Theorem~\ref{th4} below), but the right-hand side of \eqref{compaGm} might contain nontorsion classes.
%Failure of surjectivity even in ideal situation (over excellent nonsingular Stein compacta). 
%Taylor's proof in the algebraic case relies on a choice of good étale covers (with a trace) and makes use of global properties of these étale covers (see [Caenepell, Theorem 6.3.6]). No such argument can be made for the analytic topology. On this point precisely, Taylor and Raeburn write p.446:  "the etale topology is often not rich enough to allow other than torsion elements of H^3(-,Z) to be represented by cocycles in the étale topology".
%[Taylor, bottom of p.200] discusses in detail an infinite-dimensional approach (with kind of PGL_{\infty}-torsors) to construct sheaves of central separable algebras representing a class in H^2(S,O_S^{\times}), that fails for holomorphic functions but works for continuous functions. The situation for Azumaya algebras is very different, because PGL_n-torsors, so finite-dimensional problem!
The comments in \cite[bottom of p.\,200]{Taylor}, where Taylor explains why it is a difficult task to represent classes in $H^2(S,\cO_S^{\times})$ by central separable $\cO_S$-algebras, clarify why bigger Brauer group arguments fail to show that \eqref{compaGm} is surjective.

\subsection{The meromorphic function field}

If $S$ is a Stein space, the cohomology group $H^2_{\et}(\Spec(\cO(S)),\G_m)$ is not torsion in general (see Proposition~\ref{propnontorsion}), so the injection~$\Br(\cO(S))\to H^2_{\et}(\Spec(\cO(S)),\G_m)$ (see \eqref{BrH2}) may not be surjective. We prove its surjectivity if the local rings of $S$ are factorial, \eg if~$S$ is nonsingular.

\begin{thm}[Corollary \ref{th3+}]
\label{th3}
The natural morphism
\begin{equation*}
\label{BrauertoH2}
\Br(\cO(S))\to H^2_{\et}(\Spec(\cO(S)),\G_m)
\end{equation*}
is an isomorphism for any finite-dimensional locally factorial Stein space $S$.
\end{thm}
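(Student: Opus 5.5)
By Gabber's theorem, the image of the injection \eqref{BrH2} is the torsion subgroup of $H^2_{\et}(\Spec(\cO(S)),\G_m)$. It therefore suffices to show that this group is torsion when $S$ is finite-dimensional and locally factorial. The plan is to use the injection of Theorem \ref{th7},
$$H^2_{\et}(\Spec(\cO(S)),\G_m)\hookrightarrow H^2(S,\cO_S^{\times}),$$
and to show that every class in the image is torsion. Since $H^2(S,\cO_S^{\times})$ itself need not be torsion, this cannot be a purely formal consequence of injectivity. We must exploit the fact that the classes come from the étale side.

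\textbf{Step 1 (restriction to a dense open set).} Let $\alpha\in H^2_{\et}(\Spec(\cO(S)),\G_m)$. I would show that $\alpha$ dies, or becomes torsion, after pulling back to $\Spec(\cO(S))$ minus the zero locus of a suitable non-zero-divisor $f$, or after passing to the total ring of meromorphic functions $\cM(S)$. Concretely, $\alpha$ is split by some étale cover, and we can pass to the generic points of the irreducible components of $S$. At those points the local ring is a field: a field of meromorphic functions on a component, or a localization of it. There the class lies in the Brauer group of a field and is therefore torsion, since Galois cohomology $H^2$ with $\G_m$ coefficients is torsion. Since $S$ is finite-dimensional and locally factorial, hence normal, the components of $S$ are its connected components.

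\textbf{Step 2 (purity / Weil divisors).} Local factoriality is used to control the kernel of the restriction from $S$ to its generic points. The relevant exact sequences are the analytic ones
$$0\to\cO_S^{\times}\to\cM_S^{\times}\to\mathcal{D}iv_S\to 0,$$
which is exact with $\mathcal{D}iv_S$ the sheaf of Weil divisors precisely because the local rings are UFDs. The sheaf $\mathcal{D}iv_S$ is a direct sum (locally finite) of pushforwards of constant sheaves $\Z$ from irreducible hypersurfaces. Taking cohomology gives
$$H^1(S,\mathcal{D}iv_S)\to H^2(S,\cO_S^\times)\to H^2(S,\cM_S^\times).$$
I would show that $H^1(S,\mathcal{D}iv_S)=0$, or at least that it has no contribution to the image. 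A divisor sheaf is flasque-like: sections over an open set are arbitrary locally finite combinations of irreducible components, so on a Stein (paracompact) space it is soft enough for $H^1$ to vanish. Hence $H^2(S,\cO_S^\times)\hookrightarrow H^2(S,\cM_S^\times)$. It then remains to show that the image of $\alpha$ in $H^2(S,\cM_S^\times)$ is torsion. Because $\alpha$ is étale-algebraic, one can choose $n$ killing its generic fiber, as in Step 1. One then argues that $n\alpha$ vanishes in $H^2(S,\cM_S^\times)$ by comparing with the étale cohomology of $\Spec\cM(S)$ and using that $\cO(S)\to\cM(S)$ is a localization whose étale cohomology maps compatibly.

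\textbf{Main obstacle.} The delicate point is the passage from "torsion at the generic points" to "torsion globally" in Step 2. The generic fibre of $\Spec(\cO(S))$ is not noetherian and involves infinitely many components and divisors. The natural comparison map
$$H^2_{\et}(\Spec\cM(S),\G_m)\to H^2(S,\cM_S^\times)$$
is also not obviously well behaved. If that comparison proves unwieldy, the fallback is to work entirely on the étale side. In that version one uses local factoriality to get the exact sequence $0\to\G_m\to j_*\G_m\to\bigoplus_x i_{x*}\Z\to 0$ over height-one primes arising from divisors. One then shows vanishing of $H^1$ of the divisor term via $H^1_{\et}(-,\Z)=0$ on normal schemes, deducing that $H^2_{\et}(\Spec\cO(S),\G_m)$ injects into the torsion group $\Br$ of the meromorphic function ring. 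The injectivity of Theorem \ref{th7} would then serve to transport the needed vanishing statements from the analytic side.
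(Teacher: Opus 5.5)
Your opening reduction is the same as the paper's: by Gabber's theorem it suffices to show that $H^2_{\et}(\Spec(\cO(S)),\G_m)$ is torsion, and your Step 1 (the image in $H^2_{\et}(\Spec(\cM(S)),\G_m)$ is torsion) is fine. What remains is the injectivity of $H^2_{\et}(\Spec(\cO(S)),\G_m)\to H^2_{\et}(\Spec(\cM(S)),\G_m)$, which is Theorem~\ref{th4}; the paper deduces the statement from it in two lines. Your Step 2 does not supply this, and its key claim is false. The sheaf $\mathcal{D}iv_S$ is not soft. For a smooth connected hypersurface $Z\subset S$, the divisors supported on $Z$ form a direct summand $i_*\Z_Z$ of $\mathcal{D}iv_S$, so $H^1(Z,\Z)\subset H^1(S,\mathcal{D}iv_S)$. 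On this summand, the boundary map to $H^2(S,\cO_S^{\times})\simeq H^3(S,\Z)$ is, up to sign, the Gysin map $H^1(Z,\Z)\to H^3(S,\Z)$.

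Take $S=\{\sum_{j=1}^4x_j^2=1\}\subset\C^4$ (the manifold of Corollary~\ref{th7++}) and $Z=S\cap\{x_4=ix_3\}\simeq\C^{\times}\times\C$. The real sphere $S\cap\R^4$ generates $H_3(S,\Z)$, and it meets $Z$ transversally along the circle $\{x_3=x_4=0\}$, which generates $H_1(Z,\Z)$. So this Gysin map is onto $H^3(S,\Z)\simeq\Z$. Hence $H^2(S,\cO_S^{\times})\to H^2(S,\cM_S^{\times})$ is \emph{zero} here, even though its source is $\Z$, so $\cM_S^{\times}$ cannot detect the classes you care about. You also never construct a map relating $H^2(S,\cM_S^{\times})$ to $H^2_{\et}(\Spec(\cM(S)),\G_m)$, which ``torsion at the generic points'' would need.

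Your fallback is the route the introduction explicitly warns against. $\cO(S)$ is not noetherian, and at exotic points of $\Spec(\cO(S))$ the \'etale sheaves of Cartier and Weil divisors differ even when $S$ is a manifold. So Grothendieck's divisor-sheaf argument does not transfer.

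The missing idea is how the paper proves Theorem~\ref{th4}: Theorem~\ref{th7} is used to detect \emph{vanishing}, not torsion.
\begin{enumerate}
\item If $\alpha$ dies over $\cM(S)$, a limit argument kills it over $\Spec(\cO(S)[\frac1a])$ for some nonzerodivisor $a$. There one has an invertible $\alpha$-twisted sheaf.
\item This sheaf extends to a finitely presented $\alpha$-twisted sheaf on all of $\Spec(\cO(S))$ (via Rydh's results).
\item Its analytification is a coherent C\u{a}ld\u{a}raru-$\alpha^{\an}$-twisted sheaf that is invertible off the nowhere dense set $\{a=0\}$.
\item Its reflexive hull is invertible because the analytic local rings are factorial. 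This is where local factoriality enters: on noetherian analytic local rings, not on $\Spec(\cO(S))$.
\item Hence $\alpha^{\an}=0$ in $H^2(S,\cO_S^{\times})$, and Theorem~\ref{th7} gives $\alpha=0$.
\end{enumerate}
Thus $H^2_{\et}(\Spec(\cO(S)),\G_m)$ embeds in a torsion group, and Gabber's theorem concludes.
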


Theorem \ref{th3} derives at once from the following purely cohomological statement, in which~$\cM(S)$ denotes the ring of meromorphic functions on a complex space $S$ (see \cite[Chap.\,6, \S3]{GRCoherent}).

\begin{thm}[Theorem \ref{th4+}]
\label{th4}
The natural morphism
\begin{equation*}
\label{inj}
H^2_{\et}(\Spec(\cO(S)),\G_m)\to H^2_{\et}(\Spec(\cM(S)),\G_m)
\end{equation*}
is injective for any finite-dimensional locally factorial Stein space $S$. 
\end{thm}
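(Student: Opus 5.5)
Let $A=\cO(S)$, $X=\Spec(A)$ and $K=\cM(S)$. When $S$ is a connected (hence irreducible, since locally factorial) Stein space, $A$ is a domain and $K=\mathrm{Frac}(A)$ is its fraction field, since every meromorphic function on a Stein space is a quotient of holomorphic functions. We reduce to this case: $S$ is the disjoint union of its connected components, and we need compatibility of $H^2_{\et}$ with the resulting product decomposition of $A$ and $K$. This is not automatic for infinitely many components, so we will instead organize the argument so that it never uses connectedness globally. The plan imitates the classical proof for regular (or locally factorial) noetherian schemes: compare $H^2_{\et}(X,\G_m)$ with the cohomology of the generic point through the sequence
\[
0\to \G_m\to j_*\G_{m,K}\to \mathrm{Div}\to 0
\]
of étale sheaves on $X$, where $j$ is the inclusion of the generic point and $\mathrm{Div}$ is the sheaf of Cartier/Weil divisors. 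The difficulty is that $A$ is not noetherian, so we cannot use Weil divisors on $X$ directly. We therefore replace them by analytic divisors on étale covers, which are again Stein spaces by the results relating finite étale $A$-algebras to finite étale covers of $S$ (as in \cite{Steinsurface}).

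\textbf{Step 1 (local factoriality of étale $A$-algebras).} We show that for every étale $A$-algebra $B$ and every prime $\mathfrak p$ of $B$, the map $B^\times_{\mathfrak p}\to (B\otimes_A K)^\times$ has cokernel which is a torsion-free sheaf of ``divisors''. Concretely, we want exactness of $0\to\G_m\to j_*\G_m\to \mathcal D\to 0$ with $\mathcal D(U)$ the group of effective-minus-effective analytic divisors on the Stein space attached to $U$. Exactness on the right is where local factoriality enters: a locally principal analytic divisor is, on a Stein space, the divisor of a meromorphic function as soon as its line bundle is trivial. Étale-locally on $X$, i.e.\ after passing to strict henselizations, we argue that every Cartier divisor becomes principal, using that the relevant line bundle is killed étale-locally.

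\textbf{Step 2 (cohomology of $\mathcal D$).} We prove $H^1_{\et}(X,\mathcal D)=0$. Divisors on Stein spaces are locally finite sums $\sum n_i D_i$ of irreducible hypersurfaces, so $\mathcal D$ should be a (possibly infinite) product of pushforwards $\iota_{*}\Z$ from the ``generic points'' of the irreducible hypersurfaces $D_i$, realized as height-one primes of $A$ given by the ideal of functions vanishing on $D_i$. Then $H^1$ of each factor is $H^1$ of a henselian-type point with $\Z$ coefficients, which is $\Hom(\text{Galois},\Z)=0$. We also need to commute cohomology with this locally finite product, which we handle by finite-dimensionality and Stein exhaustion.

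\textbf{Step 3.} From the long exact sequence we get an injection $H^2_{\et}(X,\G_m)\hookrightarrow H^2_{\et}(X,j_*\G_m)$. The Leray spectral sequence for $j$ then gives an injection $H^2(X,j_*\G_m)\hookrightarrow H^2(K,\G_m)$, since $R^1j_*\G_m=0$ by Hilbert 90 applied to the fraction fields of strict henselizations. Combining the two injections proves the theorem.

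The main obstacle is Step 2 together with the right exactness in Step 1. The height-one primes of the non-noetherian ring $A$ do not obviously all come from analytic hypersurfaces, and the étale stalks of $j_*\G_m$ modulo units need not be free abelian groups on such primes. If this fails, the fallback is to exploit Theorem~\ref{th7}: its injectivity statement lets us work with $H^2(S,\cO_S^\times)$ and the analytic sequence $0\to\cO_S^\times\to\cM_S^\times\to\mathcal{D}iv_S\to 0$. Here local factoriality gives exactness and Steinness gives the vanishing of $H^1(S,\mathcal{D}iv_S)$, and a class dying in $H^2(K,\G_m)$ would be shown to die analytically.
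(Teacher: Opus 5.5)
\textbf{Main line (Steps 1--3).} This is Grothendieck's divisor-sequence argument moved to $\Spec(\cO(S))_{\et}$, and it fails exactly where you suspect. The ring $\cO(S)$ has many primes that are not attached to points or hypersurfaces of $S$, for instance non-finitely generated maximal ideals coming from ultrafilters on discrete sequences. One can define \'etale sheaves of Cartier and of Weil divisors on $\Spec(\cO(S))$. However, the map between them is neither injective nor surjective at such exotic geometric points, even when $S$ is a manifold.

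\begin{itemize}
\item In Step~2, the description of $\mathcal{D}$ as a product of sheaves $\iota_*\Z$ indexed by hypersurfaces is therefore not available.
\item In Step~3, the vanishing of $R^1j_*\G_m$ is not just Hilbert~90. Its stalks are $\Pic(\Spec(\cO^{\mathrm{sh}}_{\bar x}\otimes_{\cO(S)}K))$, and their vanishing requires factoriality of strict henselizations at these exotic points, which you do not have.
\item You also say you will avoid reducing to connected $S$, and then use ``the generic point''.
\end{itemize}

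\textbf{Fallback.} Your final step is right: prove $\alpha^{\an}=0$ in $H^2(S,\cO_S^{\times})$ and conclude by the injectivity of Theorem~\ref{th7+}. But the sentence ``a class dying in $H^2(K,\G_m)$ would be shown to die analytically'' is the whole content of the theorem, and the sequence $0\to\cO_S^{\times}\to\cM_S^{\times}\to\mathcal{D}iv_S\to 0$ does not supply it.
\begin{itemize}
\item There is no comparison map from $H^2_{\et}(\Spec(\cM(S)),\G_m)$ to $H^2(S,\cM_S^{\times})$: finite extensions of $\cM(S)$ correspond to branched covers of $S$, not to local homeomorphisms.
\item $\mathcal{D}iv_S$ is not coherent, so Steinness says nothing directly about $H^1(S,\mathcal{D}iv_S)$.
\end{itemize}

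What the hypothesis actually gives, by a limit argument, is a nonzerodivisor $a\in\cO(S)$ with $\alpha|_{\Spec(\cO(S)[\frac{1}{a}])}=0$. Its naive analytic consequence, $\alpha^{\an}|_{S\setminus\{a=0\}}=0$, is too weak. Take $S=(\P^2\setminus Q)\times\C^*$ with $Q$ a smooth conic, and $Z=(L\setminus Q)\times\C^*$ with $L$ a line. By K\"unneth,
$$\Br(\cO(S))\cong H^3(S,\Z)=H^2(\P^2\setminus Q,\Z)\otimes H^1(\C^*,\Z)\cong\Z/2,$$
generated by the cross product of $c_1(\cO(1))$ with a generator of $H^1(\C^*,\Z)$. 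This class restricts to zero on $S\setminus Z$, because $\cO(1)$ is trivial off $L$. So analytic vanishing off a hypersurface does not imply vanishing.

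\textbf{The missing idea.} You must keep algebraic control along $\{a=0\}$. The paper does this as follows.
\begin{enumerate}
\item Represent $\alpha$ by a $\G_m$-gerbe and take an invertible $\alpha$-twisted sheaf over $\Spec(\cO(S)[\frac1a])$.
\item Extend it to a finitely presented $\alpha$-twisted sheaf on all of $\Spec(\cO(S))$. This is an algebraic extension statement with no analytic analogue.
\item Analytify it, via an \'etale hypercovering and C\u{a}ld\u{a}raru's description, to a coherent $\alpha^{\an}$-twisted sheaf on $S$ that is invertible off the nowhere dense set $\{a=0\}$.
\item Replace it by its reflexive hull. This is invertible because the local rings are factorial, and it still carries the twisted descent datum.
\end{enumerate}
This trivializes $\alpha^{\an}$, and Theorem~\ref{th7+} then gives $\alpha=0$.
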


To prove Theorem \ref{th4}, we need a way to detect the vanishing of a cohomology class in~$H^2_{\et}(\Spec(\cO(S)),\G_m)$. To do so, we exploit the first half of Theorem~\ref{th7}. In addition to this, we make use of twisted sheaves, whose utilization in the study of Brauer groups was pioneered by de Jong and Lieblich (see \cite{deJongGabber, Lieblich}).

Theorem \ref{th4} is a complex-analytic analogue of an algebraic result of Grothen\-dieck stating that for any noetherian ring $R$ whose local rings have factorial strict henselizations, if $K$ is the total ring of fractions of $R$, then the natural morphism
\begin{equation}
\label{injRK}
H^2_{\et}(\Spec(R),\G_m)\to H^2_{\et}(\Spec(K),\G_m)
\end{equation}
is injective (see  \cite[II, Corollaire 1.10]{Grothbrauer}). Grothendieck's proof relies on a study of the \'etale sheaf of Cartier divisors on $\Spec(R)$, and more precisely on its identification with the \'etale sheaf of Weil divisors on $\Spec(R)$ (see \cite[II,~(3)~p.\,71]{Grothbrauer}). 
In the setting of Theorem \ref{th4}, one can define meaningful \'etale sheaves of Cartier and Weil divisors on $\Spec(\cO(S))$. However, the natural morphism from the first to the second is neither injective nor surjective in general (at exotic geometric points of $\Spec(\cO(S))$), even when $S$ is a manifold. This makes it difficult to prove Theorem~\ref{th4} along these lines.

\subsection{Purity}

Let $R$ be a regular domain with fraction field~$K$. Since \eqref{injRK} is injective, so is the natural morphism $\Br(R)\to \Br(K)$ (this result goes back to \cite[Theorem 7.2]{AG}). In addition, Auslander and Goldman asked in~\cite[p.\,389]{AG} whether one could recover $\Br(R)$ inside~$\Br(K)$ as
\begin{equation}
\label{purityintersection}
\Br(R)=\bigcap_{\kp}\Br(R_{\kp}),
\end{equation}
where $\kp$ runs over all height $1$ prime ideals of $R$. This statement, also known as Grothendieck's purity conjecture (see \cite[III, \S 6]{Grothbrauer}), was proved in full generality by \v{C}esnavi\v{c}ius  \cite[Theorem 1.2]{Cesnavicius} (many particular cases listed in \cite[p.\,1462]{Cesnavicius} were known before, \eg the case of excellent $\Q$-algebras \cite[III, Th\'eor\`eme~6.1]{Grothbrauer}). 
When $R$ is a $\Q$-algebra, one can rewrite \eqref{purityintersection} as an exact sequence
\begin{equation}
\label{algpurity}
0\to \Br(R)\to\Br(K)\xrightarrow{\res_x}\bigoplus_x H^1_{\et}(\Spec(\kappa(x)),\Q/\Z)
\end{equation}
(see \cite[(6.4.4)]{Cesnavicius}), where $x$ runs over all codimension one points of $\Spec(R)$ with residue field $\kappa(x)$ and~$\res_x:\Br(K)\to H^1_{\et}(\Spec(\kappa(x)),\Q/\Z)$ is the residue map (for which see \eg \cite[Definition 1.4.11 (ii)]{CTS}).

We establish a purity theorem  in Stein geometry, in the spirit of \eqref{algpurity}. 

\begin{thm}[Theorem \ref{th5+}]
\label{th5}
Let $S$ be a finite-dimensional Stein manifold. There is an exact sequence
\begin{equation}
\label{anpurity}
0\to \Br(\cO(S))\to\Br(\cM(S))\xrightarrow{\res_D}\prod_D H^1_{\et}(\Spec(\cM(D)),\Q/\Z),
\end{equation}
where $D$ runs over all irreducible analytic subsets of codimension $1$ of $S$.
\end{thm}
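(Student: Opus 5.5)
Injectivity of $\Br(\cO(S))\to\Br(\cM(S))$ follows from Theorem~\ref{th4}. A finite-dimensional Stein manifold is locally factorial, since the local rings $\cO_{S,s}$ are regular. By Theorem~\ref{th3}, $\Br(\cO(S))=H^2_{\et}(\Spec(\cO(S)),\G_m)$, and $\Br(\cM(S))$ injects into $H^2_{\et}(\Spec(\cM(S)),\G_m)$ by \eqref{BrH2}. Theorem~\ref{th4} then gives injectivity. Passing to connected components, I will assume $S$ connected, so that $\cM(S)$ is a field.

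For each irreducible divisor $D$, I define $\res_D$ algebraically. Let $R_D\subset\cM(S)$ be the ring of meromorphic functions that are holomorphic at the generic point of $D$. More precisely, I would take the localization of $\cO(S)$ at the prime ideal of functions vanishing on $D$, or better the ring of germs along a nonempty open subset of $D_{\mathrm{reg}}$. This ring is a discrete valuation ring, with valuation the order of vanishing along $D$, and its residue field embeds into $\cM(D)$. Since everything is a $\Q$-algebra, the classical residue $\Br(\cM(S))\to H^1_{\et}(\kappa_D,\Q/\Z)\to H^1_{\et}(\Spec(\cM(D)),\Q/\Z)$ is available. Composition with $\cO(S)\subset R_D$ shows that classes from $\Br(\cO(S))$ are unramified at $R_D$, so they have residue zero. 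This gives a complex.

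The main step is exactness in the middle. Let $\alpha\in\Br(\cM(S))$ have all residues zero. I would represent $\alpha$ by a central simple algebra $A$ over $\cM(S)$. Choose a $\cO(S)$-lattice, for instance the $\cO(S)$-span of a basis. This defines a coherent sheaf of $\cO_S$-algebras $\cA$, which is Azumaya off a proper analytic subset $Z\subset S$, assembled locally via Cartan's Theorem A from finitely many meromorphic denominators. The divisorial part of $Z$ is a locally finite union of irreducible divisors $D$. Vanishing of $\res_D(\alpha)$, together with algebraic purity \eqref{algpurity} applied to the regular local rings $\cO_{S,s}$ at points $s$ of $D$, shows that near a general point of $D$ the class comes from an Azumaya algebra. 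I would then modify the lattice locally, taking a maximal order, to extend the Azumaya locus across codimension one. This leaves a bad set of codimension $\ge2$. There, purity for the regular local rings $\cO_{S,s}$ (\v{C}esnavi\v{c}ius, or Grothendieck's case of excellent $\Q$-algebras, since analytic local rings are excellent) shows that at each point $s$ the class lies in $\Br(\cO_{S,s})$. The result is a class in the analytic Brauer group of $S$, i.e.\ a class of $H^2(S,\cO_S^\times)$ represented locally by holomorphic Azumaya algebras, and it restricts to $\alpha$ generically. By Theorem~\ref{th1} and its proof, which identifies analytic and algebraic Brauer groups for Stein spaces, it comes from $\Br(\cO(S))$. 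Injectivity into $\Br(\cM(S))$ then identifies its image with $\alpha$.

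I expect the main obstacle to be this gluing. Local purity gives Azumaya algebras $\cA_s$ over each stalk, but patching them into a global analytic Azumaya algebra over $S$ needs care. The local algebras are only Morita-equivalent, not isomorphic, on overlaps, and the rank must be chosen uniformly. I would first try working in $H^2(S,\cO_S^\times)$. Purity gives local lifts, and the injectivity $H^2(\cO_{S,s}$-level$)\to H^2(\cM)$ from Theorem~\ref{th4} applied to small Stein neighbourhoods makes these lifts unique. So they glue to a global class in $H^2(S,\cO_S^\times)$. This class is torsion, being killed by the index of $A$ locally and hence globally by uniqueness. I would then use Theorem~\ref{th2} and Grauert's Oka principle to realize the torsion class by an Azumaya algebra.
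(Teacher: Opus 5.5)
The injectivity part and your check that classes from $\Br(\cO(S))$ are unramified are fine. One small point: when $S$ has infinitely many components, $\Br(\cM(S))$ is not $\prod_T\Br(\cM(T))$, so reducing to connected $S$ needs a uniform bound on periods, which Theorem~\ref{cor1} supplies. The genuine gap is in exactness at $\Br(\cM(S))$, at the local-to-global step.

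Since $\cO_{S,s}$ is henselian with residue field $\C$, it is strictly henselian, so $\Br(\cO_{S,s})=0$. Local purity therefore only tells you that $\alpha$ becomes split over $\mathrm{Frac}(\cO_{S,s})$, hence in $\Br(\cM(V))$ for small Stein neighbourhoods $V$ of $s$. All your ``local lifts'' are $0$.

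Uniqueness of these lifts does not let you glue them. The presheaf $V\mapsto H^2(V,\cO_V^{\times})$ (or $V\mapsto\Br(\cO(V))$) is not a sheaf: every class of $H^2(S,\cO_S^{\times})$ is locally zero, so compatible local classes determine no global class. Gluing would need real descent data, namely local gerbes or Azumaya algebras with compatible isomorphisms on overlaps. That is exactly the difficulty you identified and then bypassed. Your torsion claim and your maximal-order route fail for the same reason, since local modifications of lattices are not canonical and need not glue.

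The approach cannot work even in principle. If $H^3(S,\Z)_{\tors}\neq 0$, a nonzero $\beta\in\Br(\cO(S))$ has nonzero image in $\Br(\cM(S))$ by Corollary~\ref{corinjOM}. It is unramified, yet it has exactly the same local data as $0$, so no procedure using only the local behaviour of $\alpha$ can distinguish the two. Moreover, even given a global class in $H^2(S,\cO_S^{\times})$, you have no map to $\Br(\cM(S))$ with which to check that it recovers $\alpha$. Injectivity of $\Br(\cO(S))\to\Br(\cM(S))$ gives uniqueness, not this identification.

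What is missing is a global, topological argument, which is how the paper proceeds:
\begin{enumerate}
\item View $\alpha$ (of order $n$) in $H^2_{\et}(\Spec(\cM(S)),\Z/n)$. Spread it out to $\alpha'\in H^2_{\et}(\Spec(\cO(S)[\frac{1}{a}]),\Z/n)$ and take its image $\beta\in H^2(S\setminus Z,\Z/n)$, where $Z=\{a=0\}$.
\item By further spreading out, the vanishing of the $\res_D$ becomes the vanishing of the Gysin residues of $\beta$ in $H^1(D_i^0,\Z/n)$ (Lemma~\ref{comparisonresidues}). Since $Z^{\sing}$ has codimension $\geq 2$, $\beta$ then extends to $\beta''\in H^2(S,\Z/n)$.
\item The genuinely global obstruction is $\partial(\beta'')\in H^3(S,\Z)[n]$. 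It is removed by subtracting from $\alpha$ the corresponding element of $\Br(\cO(S))$ given by Theorem~\ref{cor1}.
\item The remaining integral class is the $c_1$ of a line bundle, and it is killed by enlarging $Z$.
\item The Leray spectral sequence of $\varepsilon_U$, together with the effacement lemma \cite[Lemma 6.4]{Steinsurface}, kills $\alpha'$ after one more localization.
\end{enumerate}
Subtracting a global Brauer class detected in $H^3(S,\Z)[n]$ is precisely the step your local approach has no access to.
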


The injectivity of $\Br(\cO(S))\to\Br(\cM(S))$ follows from Theorem \ref{th4}. Our proof of the exactness of \eqref{anpurity} at $\Br(\cM(S))$ relies on the topological computation of~$\Br(\cO(S))$ provided by Theorem \ref{cor1} and on an effacement lemma for degree~$1$ singular cohomology classes of \'etale $\cO(S)$-algebras proved in \cite[Lemma 6.4]{Steinsurface}.

\subsection{The index}
\label{parintroindex}

Let $R$ be a ring. The \textit{period} $\per(\alpha)$ of $\alpha\in\Br(R)$ is its order in the torsion group $\Br(R)$. A more delicate invariant of $\alpha$ is its \textit{index} $\ind(\alpha)$ (the gcd of the degrees  of Azumaya algebras representing $\alpha$). In the last section of this article, we study the integer $\ind(\alpha)$ in the case of Stein algebras.

We first show that, in this situation, the index can be computed topologically (the index of a topological Brauer class being defined as the gcd of the degrees of topological Azumaya algebras representing it).

\begin{thm}[Theorem \ref{th10+}]
\label{th10}
Let $S$ be a finite-dimensional Stein space. For any~$\alpha\in \Br(\cO(S))$ with image $\alpha^{\topo}$ in $\Br_{\topo}(S)$, one has $\ind(\alpha)=\ind(\alpha^{\topo})$.
\end{thm}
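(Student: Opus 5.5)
Since the period and index are defined on both sides, and $\Br(\cO(S))\to\Br_{\topo}(S)$ is an isomorphism by Theorem \ref{th1}, the inequality $\ind(\alpha^{\topo})\mid\ind(\alpha)$ is immediate. If $A$ is an Azumaya $\cO(S)$-algebra of degree $d$ representing $\alpha$, then $A$ is a finitely generated projective $\cO(S)$-module. It therefore corresponds, by Forster's results, to a holomorphic Azumaya algebra on $S$ of degree $d$. Its underlying topological Azumaya algebra has degree $d$ and represents $\alpha^{\topo}$. Hence every degree occurring for $\alpha$ occurs for $\alpha^{\topo}$, and gcds compare accordingly.

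For the converse, it suffices to show the following: if a topological Azumaya algebra $\cA$ of degree $d$ represents $\alpha^{\topo}$, then there is an Azumaya $\cO(S)$-algebra of degree $d$ representing $\alpha$.

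\begin{enumerate}[label=(\roman*)]
\item Interpret $\cA$ as a principal $\PGL_d(\C)$-bundle on $S$, that is, a class in $H^1(S,\PGL_d(\C)_{\mathrm{cont}})$.
\item Apply Grauert's Oka principle \cite{GrauertOka}: since $S$ is Stein and $\PGL_d(\C)$ is a complex Lie group, the map $H^1(S,\cO_S(\PGL_d))\to H^1(S,\mathcal{C}^0(\PGL_d))$ is bijective. So $\cA$ is isomorphic, as a topological Azumaya algebra, to the underlying algebra of a holomorphic Azumaya algebra $\cA^{\an}$ of degree $d$.
\item By \cite[Proposition 2.5, Remark 2.6]{Steinsurface}, the global sections $A:=H^0(S,\cA^{\an})$ form an Azumaya $\cO(S)$-algebra of degree $d$. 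This requires finite-dimensionality of $S$, so that locally free coherent sheaves of finite rank have finitely generated projective global sections.
\item The class of $A$ in $\Br(\cO(S))$ maps to $[\cA]=\alpha^{\topo}$. By injectivity in Theorem \ref{th1}, $[A]=\alpha$.
\end{enumerate}

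Consequently every degree realized for $\alpha^{\topo}$ is realized for $\alpha$, giving $\ind(\alpha)\mid\ind(\alpha^{\topo})$. Combined with the first paragraph, $\ind(\alpha)=\ind(\alpha^{\topo})$.

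The main obstacle is step (iii): making sure the passage from holomorphic Azumaya algebras to Azumaya $\cO(S)$-algebras preserves degree and is compatible with the maps in Theorem \ref{th1}. This is essentially the content of the proofs of Propositions \ref{propcompa1} and \ref{propcompa2}, so I expect to reuse those compatibilities directly. Note also that the argument only needs the sets of realizable degrees to coincide, not any control on morphisms between algebras.
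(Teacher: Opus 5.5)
Your proposal is correct and is essentially the paper's proof. The paper gets $\ind(\alpha^{\topo})\mid\ind(\alpha)$ by analytifying an Azumaya $\cO(S)$-algebra and tensoring with $\cC_S$, both of which preserve the degree. It gets the converse by making a degree-$d$ topological Azumaya algebra holomorphic via Grauert's Oka principle (as in Proposition \ref{propcompa1}), then algebraizing it in the same degree via the equivalence \eqref{monoidaleq} (as in Proposition \ref{propcompa2}). Your step (iv), which uses the injectivity of $\Br(\cO(S))\to\Br_{\topo}(S)$ to ensure the algebraized algebra represents $\alpha$, is left implicit in the paper but is exactly what is needed.
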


  If $V$ is a connected smooth projective variety over $\C$ and $\alpha\in \Br(V)$ has image~$\alpha_{\C(V)}$ in $\Br(\C(V))$, then $\ind(\alpha)=\ind(\alpha_{\C(V)})$ (the argument in \cite[Proposition 6.1]{AW2} is attributed to Saltman). We prove a Stein analogue of this result.
  
\begin{thm}[Theorem \ref{th9+}]
\label{th9}
Let $S$ be a connected Stein manifold. Fix a class ${\alpha\in \Br(\cO(S))}$ and let~$\alpha_{\cM(S)}$ be its image in $\Br(\cM(S))$. Then $\ind(\alpha)=\ind(\alpha_{\cM(S)})$.
\end{thm}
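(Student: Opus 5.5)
The inequality $\ind(\alpha_{\cM(S)})\mid\ind(\alpha)$ is immediate: an Azumaya algebra of degree $d$ over $\cO(S)$ representing $\alpha$ base changes to one over $\cM(S)$ representing $\alpha_{\cM(S)}$. Since $S$ is a connected manifold, $\cO(S)$ is a domain and $\cM(S)$ is its fraction field, so $\alpha_{\cM(S)}$ is represented by a central division algebra $\Delta$ over the field $\cM(S)$, of degree $d=\ind(\alpha_{\cM(S)})$. The plan is to show that $\ind(\alpha)$ divides $d$ by producing an Azumaya $\cO(S)$-algebra of degree $d$ in the class $\alpha$. This is the Stein analogue of Saltman's argument: take a maximal order and use purity.

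First, I will realize $\Delta$ analytically. Let $\cA$ be a holomorphic Azumaya algebra of some degree $N$ on $S$ representing $\alpha$; it exists by Theorem \ref{th1} and its proof via the analytic Brauer group. Then $A=\cA(S)\otimes_{\cO(S)}\cM(S)\cong M_m(\Delta)$, so there is a simple right module $P$ of $\cM(S)$-dimension $dN$, which is a minimal idempotent image. Choose a coherent subsheaf $\cF\subset \cA\otimes\cM_S$ of right $\cA$-modules extending $P$, namely $e\cA$ for an idempotent $e$ after clearing denominators, with generic rank $Nd$. Replace $\cF$ by its reflexive hull $\cF^{\vee\vee}$. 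On a manifold, reflexive sheaves are locally free outside an analytic subset of codimension $\geq 3$. The sheaf $\cB=\cEnd_{\cA}(\cF)$ is then a reflexive sheaf of algebras, generically isomorphic to $\Delta$ (Morita), of rank $d^2$.

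The key step, and the main obstacle, is showing that $\cB$ is Azumaya everywhere. Over $S\setminus Z$ with $Z$ of codimension $\geq 2$, this should follow from a local analysis at the generic points of divisors. The local rings $\cO_{S,x}$ are regular, and the algebra $\cEnd_{\cA}(\cF)$ for a reflexive, hence locally free in codimension $\leq 2$, $\cA$-module is Azumaya wherever $\cF$ is locally free. This holds because $\cF$ is then a locally projective generator for the Azumaya algebra $\cA$ locally, and Morita theory gives that $\cEnd_\cA(\cF)$ is Azumaya Morita equivalent to $\cA$. So $\cB$ is Azumaya outside a set $Z$ of codimension $\geq 3$. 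To cross $Z$, I would use the topological/analytic machinery. The class of $\cB|_{S\setminus Z}$ corresponds to a $\PGL_d$-torsor on $S\setminus Z$, and $\cB$ is its reflexive extension. One needs the torsor to extend over $Z$. Topologically, $H^i(S,S\setminus Z)=0$ for $i\leq 5$, so the topological torsor and its comparison with $\alpha^{\topo}$ extend. By Theorem \ref{th10}, however, it is cleaner to argue only topologically: the $\PGL_d$-bundle on $S\setminus Z$ extends to a topological $\PGL_d$-bundle on $S$ representing $\alpha^{\topo}$, since the obstructions lie in $H^{i+1}(S,S\setminus Z;\pi_i(\PGL_d))$, which vanishes in low degrees. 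Higher-degree obstructions are the hard part. If that fails, I would try an alternative: if necessary, shrink $Z$ by a different choice of $\cF$ (using the Stein property and a general section argument to choose the lattice locally free outside codimension higher than $\dim S$).

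Finally, with a topological Azumaya algebra of degree $d$ representing $\alpha^{\topo}$, Theorem \ref{th10} gives $\ind(\alpha)=\ind(\alpha^{\topo})\mid d$, which concludes the proof.
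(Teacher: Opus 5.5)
Your proposal has a genuine gap, and it sits in the step you flag yourself as the main obstacle; as set up, it cannot be closed. Away from the singular locus $Z$ of the reflexive hull, your argument is fine. To extend the $\PGL_d$-bundle underlying $\cEnd_{\cA}(\cF)|_{S\setminus Z}$ across $Z$, however, you meet obstructions in $H^{k+1}(S,S\setminus Z;\pi_k(\BPGL_d(\C)))$. (The coefficients are the homotopy groups of the classifying space, not of $\PGL_d$.) These groups vanish only for $k+1<2c$, where $c\geq 3$ is the complex codimension of $Z$. Beyond that range nothing in your construction controls the obstruction classes, and they can be nonzero already in the first degree you do not cover. For instance, take $\cA=M_N(\cO)$ on a ball in $\C^3$ and the reflexive right $\cA$-module $V\otimes\cO^{N}$ with $V=\ker(\cO^3\xrightarrow{(x,y,z)}\cO)$. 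Then $V$ is reflexive of rank $2$ with an isolated singularity, and it restricts to the nontrivial rank $2$ bundle on the link $S^5$ (the nonzero element of $\pi_4(U(2))\cong\pi_4(\PGL_2(\C))\cong\Z/2$). Hence $\cEnd_{\cA}(V\otimes\cO^N)=\cEnd(V)$ does not extend across the origin, even topologically. Your fallback, choosing $\cF$ locally free everywhere, is just a restatement of what you want to prove.

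More fundamentally, you are aiming at a strictly stronger statement than the theorem: an Azumaya algebra of degree \emph{exactly} $d=\ind(\alpha_{\cM(S)})$ in the class $\alpha$, i.e.\ an Azumaya maximal order in $\Delta$. Its algebraic analogue is false in general. Antieau and Williams showed, via topological obstructions coming from classifying spaces, that unramified division algebras over regular affine schemes of dimension $\geq 6$ need not contain Azumaya maximal orders. The maximal-order argument relies on reflexive modules being locally free, i.e.\ on dimension $\leq 2$. The algebraic argument the paper adapts (from Antieau--Williams, attributed to Saltman) is a resolution argument, not a maximal-order argument.

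The missing idea is that $\ind$ is a gcd. It therefore suffices to produce a topological $\alpha^{\topo}$-twisted bundle of rank $d+Ni$ for \emph{some} $i\geq 0$, where $N$ is the degree of $\cA$. Since $\ind(\alpha^{\topo})\mid N$, this gives $\ind(\alpha^{\topo})\mid d$, and Theorem \ref{th10} concludes. Adding copies of $\cA$ moves you into the stable range, where there are no obstructions. But you first need a global \emph{stable} object of $\cA$-rank $d$ on all of $S$, and the reflexive hull does not supply one. The paper's Proposition \ref{proptwistedresolution} builds it as follows:
\begin{enumerate}
\item It keeps your coherent module $e_0\cA$; this first step of yours is fine, and no reflexive hull is needed.
\item It resolves this module by $\cA$-bundles over Stein neighbourhoods of an exhaustion of $S$ by compact sublevel sets $S_i$. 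The resolutions are finite by Hilbert's syzygy theorem, since $S$ is a manifold.
\item Alternating sums give classes $\kappa_i$ in the twisted K-theory of $(S_i,\cB|_{S_i})$, where $\cB=\cA\otimes_{\cO_S}\cC_S$. Each $\kappa_i$ has rank $d$, and they are compatible under restriction via mapping cones.
\item By homotopy extension, the $\kappa_i$ glue into a section of $\BU_{(S,\cB),d}\to S$.
\item Once $d+Ni\geq\dim_{\C}S$, the homotopy fiber of $\BU(d+Ni)\to\BU$ is $2\dim_{\C}S$-connected. Obstruction theory then lifts the section to $\BU_{(S,\cB)}(d+Ni)$, giving the required twisted bundle of rank $d+Ni$.
\end{enumerate}
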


Extending the argument in \cite{AW2} to the Stein context is complicated by the fact that coherent sheaves on $S$ do not necessarily admit global finite resolutions by vector bundles (only in restriction to compact subsets of~$S$). To overcome this difficulty, we exploit the fact that the index can be computed topologically (Theorem~\ref{th9}) and the \textit{twisted K-theory} cohomology theory of \cite{DK,AS} (see~\S\ref{partwistedKth}).

Let $V$ be a connected smooth projective variety of dimension $d$ over $\C$. The period-index conjecture (see \cite[p.\,389]{CTBourbaki}) asks whether $\ind(\alpha)\mid\per(\alpha)^{d-1}$ for all~$\alpha\in \Br(\C(V))$. It holds when $d=1$ because~$\Br(\C(V))=0$ by Tsen's theorem (see \cite[II.3.1, Proposition 6]{SerreCG}), when~${d=2}$ by de Jong's period-index theorem~\cite{deJong}, and it is open when $d\geq 3$. This problem provides incentives to construct Brauer classes whose period and index differ. Many strategies have been developed to do so, using ramification, degeneration, index reduction or Hodge theory arguments (see~\cite{CT} and its appendix, as well as \cite{Kresch,Hotchkiss,dJP}).

Let $S$ be an irreducible and reduced Stein space of dimension~$d$, one can still ask whether $\ind(\alpha)\mid\per(\alpha)^{d-1}$ for all $\alpha\in\Br(\cM(S))$. This holds for~$d=1$ because~$\Br(\cM(S))=0$ by a result of M.\,Artin \cite[Proposition~3.7]{Guralnick}, for~$d=2$ by the period-index theorem of \cite[Theorem~1.4]{Stein}, and it is open for $d\geq 3$. Again, this motivates the search for Brauer classes whose period and index differ. 
As in the algebraic case, one can use ramification to produce such examples.
However, classes in the image of~${\Br(\cO(S))\to\Br(\cM(S))}$ are not susceptible to these valuative methods (discrete valuations on $\cM(S)$ are centered on $\cO(S)$  by \cite[Theorem~I]{Isssa}) and we have been unable to apply to them the other methods listed above. Our last result shows that, in spite of this, the period and the index of a class in the image of ${\Br(\cO(S))\to\Br(\cM(S))}$ may not coincide.

\begin{thm}[Theorem \ref{th8+}]
\label{th8}
There exist a connected Stein manifold $S$ and a class $\alpha\in\Ima[\Br(\cO(S))\to\Br(\cM(S))]$ such that $\ind(\alpha)\neq\per(\alpha)$.
\end{thm}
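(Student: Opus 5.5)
By Theorem \ref{th9}, for $\alpha\in\Br(\cO(S))$ on a connected Stein manifold we have $\ind(\alpha_{\cM(S)})=\ind(\alpha)$. By Theorem \ref{th10}, this also equals $\ind(\alpha^{\topo})$. Moreover, $\per(\alpha_{\cM(S)})=\per(\alpha)$, since $\Br(\cO(S))\to\Br(\cM(S))$ is injective by Theorem \ref{th4}. Finally, $\per(\alpha)=\per(\alpha^{\topo})$, since $\Br(\cO(S))\to\Br_{\topo}(S)\cong H^3(S,\Z)_{\tors}$ is an isomorphism by Theorem \ref{cor1}. So it suffices to find a connected Stein manifold $S$ and a class $\beta\in H^3(S,\Z)_{\tors}$ whose topological period and topological index differ. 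Any such $\beta$ lifts to some $\alpha\in\Br(\cO(S))$, and its image in $\Br(\cM(S))$ then has the required property.

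For the topology I would use the Antieau--Williams examples. For instance, take the classifying space $B(\Z/2\times\Z/2)$, or better a finite skeleton approximation. In \cite{AW1,AW2} they produce finite CW complexes $X$ and classes $\beta\in H^3(X,\Z)$ of period $2$ whose topological index is divisible by $4$. A standard choice is a suitable skeleton of $K(\Z/2,2)$, or of $B(\mathrm{SL}_n/\mu_m)$. The obstruction is detected by the differentials of the twisted Atiyah--Hirzebruch spectral sequence, or by characteristic classes of $\PGL_n$-bundles.

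The next step is to realize $X$, up to homotopy, as a Stein manifold. Every finite CW complex of dimension $k$ is homotopy equivalent to a finite simplicial complex. This embeds in some $\R^N$, and by taking a regular neighbourhood we obtain a compact manifold with boundary. Alternatively, one can use the standard fact that a finite simplicial complex $K$ admits a smooth affine complex algebraic variety homotopy equivalent to it (a Jouanolou-type trick, or the totally real embedding in $\C^N$ with Stein tubular neighbourhoods, following Grauert). The cleanest route: embed $K$ piecewise-linearly as a totally real subcomplex of $\R^N\subset\C^N$. A small neighbourhood of $\R^N$ is Stein, and Stein neighbourhoods retracting onto $K$ exist by results on totally real sets. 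Another option is to realize $X$ as a retract of a smooth affine variety and use Theorem \ref{th2}, which holds for finite-dimensional CW complexes. Homotopy invariance of $H^3(-,\Z)_{\tors}$ and of the topological index (the latter is defined by Azumaya algebras, i.e.\ by $\PGL_n$-bundles with prescribed class, which is homotopy invariant) then transfers period and index to $S$.

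The main obstacle is the input of the topological example itself: producing a class with $\per\neq\ind$ on a finite-dimensional complex, since the argument needs a finite-dimensional CW complex for Theorem \ref{th2}. I would cite Antieau--Williams directly, if it can be taken as known. If not, I would compute as follows. Take a high skeleton of $K(\Z/2,2)$ with $\beta$ the Bockstein of the fundamental class. Show $\ind(\beta)\neq 2$ by checking that a $\PGL_2$-bundle with class $\beta$ would force $\beta\cdot w$-type relations. Concretely, the composite $B\PGL_2\to K(\Z,3)$ must factor $K(\Z/2,2)\to K(\Z,3)$ through the relevant skeleton. This can be ruled out using $\Sq^2$ and the known mod $2$ cohomology of $B\SO(3)$: there $w_2$ satisfies $\Sq^1 w_2=w_3$ and the Wu relation $\Sq^2 w_3 = w_2w_3$ (with $w_4=0$ for $\SO(3)$), whereas in $K(\Z/2,2)$ the class $\Sq^2\Sq^1\iota$ is not decomposable in this way. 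This gives the contradiction on a finite skeleton of dimension at least $5$.
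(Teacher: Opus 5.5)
Your main line is the paper's proof. You reduce, via Theorems \ref{th9} and \ref{th10} and the isomorphism $\Br(\cO(S))\isoto H^3(S,\Z)_{\tors}$ of Theorem \ref{cor1}, to a finite CW complex carrying a topological Brauer class with $\per\neq\ind$. You import that class from Antieau--Williams (the paper takes period $2$ and index $8$ from \cite[Theorem~B]{AW2}), and you replace the complex by a homotopy equivalent connected Stein manifold.

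The one genuine difference is the realization step. The paper uses Theorem \ref{lemStein} (Eliashberg--Gompf), which gives a Stein manifold of the same dimension as the complex. You instead take a Grauert-type Stein tube around the interior $W\subset\R^N$ of a regular neighbourhood of the embedded polyhedron, i.e.\ a Stein neighbourhood of the totally real submanifold $W\subset\C^N$ that deformation retracts onto it. This is more classical and entirely sufficient, because $\dim S$ plays no role in Theorem \ref{th8+}. Cite it in this precise form rather than as ``results on totally real sets''.

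You also do not need the injectivity of $\Br(\cO(S))\to\Br(\cM(S))$. Since $\per(\alpha_{\cM(S)})\mid\per(\alpha)\mid\ind(\alpha)=\ind(\alpha_{\cM(S)})$, the inequality $\per(\alpha)\neq\ind(\alpha)$ already forces $\per(\alpha_{\cM(S)})\neq\ind(\alpha_{\cM(S)})$. This is why the paper only records that its class is $2$-torsion of index $8$.

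Some side remarks should be removed or repaired.
\begin{itemize}
\item $B(\Z/2\times\Z/2)$ is not an example. The generator of $H^3(B(\Z/2\times\Z/2),\Z)\simeq\Z/2$ is the class of the projective representation $\Z/2\times\Z/2\to\PGL_2(\C)$ given by Pauli matrices, so its index is $2$.
\item The assertion that every finite simplicial complex is homotopy equivalent to a smooth complex affine variety is not a standard fact. It is false in general, since fundamental groups of smooth complex varieties obey Hodge-theoretic restrictions.
\item Your fallback computation only shows that no degree-$2$ Azumaya algebra represents $\beta$. As $\ind(\beta)$ is a gcd of degrees, this does not by itself exclude $\ind(\beta)=2$.
\end{itemize}

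To close the last gap, work on the $5$-skeleton $X_5$ of $K(\Z/2,2)$; indices can only drop under restriction, and $\beta|_{X_5}$ still has period $2$. If $\ind(\beta)=2$, there is a twisted $K$-theory class of rank $2$, hence a section as in Lemma \ref{lemtwistedKth}. The homotopy fiber $\mathrm{U}/\mathrm{U}(2)$ of $\BU(2)\to\BU$ is $4$-connected, so the obstruction-theoretic argument in the proof of Proposition \ref{proptwistedresolution} turns this section into an honest rank-$2$ twisted bundle on the $5$-dimensional complex $X_5$. Its endomorphism algebra is a degree-$2$ Azumaya algebra representing $\beta$. Your Steenrod-square computation then gives the contradiction: $w_2=\iota$ is forced because $H^2(X_5,\Z)=0$, whereas $\Sq^2\Sq^1\iota\neq\iota\cdot\Sq^1\iota$ in $H^5(X_5,\Z/2)$. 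So patched, the fallback gives a self-contained example with $\per=2\neq\ind$, which is all that Theorem \ref{th8+} requires.
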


The obstruction to the equality of period and index used in the proof of Theorem~\ref{th8} is of a topological nature: we exploit the topological period-index problem put forward and studied by Antieau and Williams (see \cite{AW1,AW2}). In addition, our argument relies on Theorems \ref{th10} and \ref{th9}.

\section{Preliminaries}

\subsection{Brauer groups}
\label{pardefBrauer}

Let $(X,\cO_X)$ be a locally ringed site 
%no need to assume enough points.
(in our applications, it will be the \'etale site of a scheme, or a complex space $S$ endowed with its sheaf~$\cO_S$ of holomorphic functions, or a topological space $T$ endowed with its sheaf $\cC_T$ of complex-valued continuous functions). An \textit{Azumaya algebra} of degree $n$ on $(X,\cO_X)$ is an $\cO_X$-algebra that is locally isomorphic to $M_n(\cO_X)$. Two Azumaya $\cA$ and $\cB$ on $(X,\cO_X)$ are \textit{Morita-equivalent} if there exists an algebra isomorphism~${\cA\otimes\cEnd(\cE)\isoto\cB\otimes\cEnd(\cF)}$ for some locally free~$\cO_X$\nobreakdash-mod\-ules of finite rank $\cE$ and $\cF$. The Brauer group $\Br(X,\cO_X)$ of $(X,\cO_X)$ is the group of Morita-equivalence classes of Azumaya algebras on~$(X,\cO_X)$, endowed with the group law induced by tensor product (see \cite[I, \S2]{Grothbrauer} or \cite[Chap.\,V, \S4]{Giraud}).

Isomorphism classes of degree $n$ Azumaya algebras on $(X,\cO_X)$ are naturally in bijection with isomorphism classes of $\PGL_n(\cO_X)$-torsors, and hence are classified by~$H^1(X,\PGL_n(\cO_X))$ (see \cite[I, Remarque 5.12]{Grothbrauer} or \cite[Chap.\,V, Remarque~4.5]{Giraud}). The boundary maps~$H^1(X,\PGL_n(\cO_X))\to H^2(X,\cO_X^{\times})$ of the short exact sequences 
$0\to\cO_X^{\times}\to\GL_n(\cO_X)\to\PGL_n(\cO_X)\to 0$ induce an injective morphism
\begin{equation}
\label{injBrH2}
\Br(X,\cO_X)\to H^2(X,\cO_X^{\times})_{\tors}
\end{equation}
(see \cite[I, \S2]{Grothbrauer} or \cite[Chap.\,V, \S4.4, \S4.6]{Giraud}).

If $R$ is a ring, set $\Br(R):=\Br(\Spec(R)_{\et},\cO_{\Spec(R)})$. If $S$ is a complex space, set~$\Br_{\an}(S):=\Br(S,\cO_S)$. If $T$ is a topological space, set $\Br_{\topo}(T):=\Br(T,\cC_T)$.

\subsection{The analytification functor}
\label{paranalytification}

Here are a few recollections on analytification in Stein geometry, for use in the proofs of Theorems \ref{cor1+}, \ref{th7+}, \ref{th4+}, \ref{th5+} and~\ref{th9+}.

Let $S$ be a Stein space. To an~$\cO(S)$\nobreakdash-scheme of finite presentation~$X$, Bingener associates its \textit{analytification}: a complex space $X^{\an}\to S$ over $S$ equipped with a morphism of locally ringed spaces~$i_X:X^{\an}\to X$, characterized by the property~that 
\begin{equation*}
\label{defan}
\Hom_S(S',X^{\an})\xrightarrow{i_X\circ\, -}\Hom_{\cO(S)-\textrm{locally ringed spaces}}(S',X)
\end{equation*}
is bijective for all complex spaces $S'$ over $S$ (\cite[Satz~1.1]{Bingener}, see also~\hbox{\cite[\S 4.1]{Stein}} or \cite[\S 2.2]{Steinsurface}). Concretely, if $X=\Spec(\cO(S)[x_1,\dots,x_n]/\langle f_1,\dots, f_m\rangle)$ is affine, then $X^{\an}$ is constructed as the zero locus of $f_1,\dots, f_m\in\cO(S\times\C^n)$ in $S\times \C^n$. In general, the existence of the analytification is deduced from the affine case by choosing affine charts and gluing. This construction is functorial \cite[p.\,2]{Bingener}, compatible with fiber products \hbox{\cite[p.\,3]{Bingener}}, and turns \'etale morphisms into local biholomorphisms \cite[Satz~3.1]{Bingener}.

Let $X$ be an $\cO(S)$-scheme of finite presentation and let $\cF$ be a quasi-coherent sheaf of finite presentation. Bingener defines the \textit{analytification} of $\cF$ to be the coherent sheaf $\cF^{\an}:=i_X^*\cF$ on $X^{\an}$, and shows that the analytification functor~${\cF\mapsto\cF^{\an}}$ is exact (see \cite[p.\,3]{Bingener}). 

Define a coherent sheaf $\cG$ on $S$ to be \textit{globally of finite presentation} if it admits a presentation of the form $\cO_S^{\oplus n_2}\to\cO_S^{\oplus n_1}\to\cG\to 0$ for some integers $n_1$ and~$n_2$. In \cite[Proposition 2.5]{Steinsurface}, a monoidal adjoint equivalence of categories 
\begin{equation}
\label{monoidaleq}
 \left\{  \begin{array}{l}
    \textrm{finitely presented quasi-coherent}\\ \hspace{2em}\textrm{sheaves on } \Spec(\cO(S))
  \end{array}\right\}
    \stackrel[]{}{\rightleftarrows} 
 \left\{  \begin{array}{l}
    \textrm{globally finitely presented}\\ \hspace{.7em}\textrm{coherent sheaves on }S
  \end{array}\right\}
\end{equation}
with left-to-right functor $\cF\mapsto\cF^{\an}$ and right-to-left functor $\cG\mapsto \widetilde{\cG(S)}$ was constructed. To be precise, the statement of \cite[Proposition 2.5]{Steinsurface} uses $\cO(S)$\nobreakdash-mod\-ules instead of quasi-coherent sheaves on~${\Spec(\cO(S))}$, but these two categories are equivalent in a way respecting the tensor product (see \cite[Lemmas~\href{https://stacks.math.columbia.edu/tag/01IB}{01IB} and~\href{https://stacks.math.columbia.edu/tag/01I8}{01I8}\,(1)]{SP}). In addition, it is not stated in \loccit that the left-to-right functor is Bingener's analytification functor, but this follows at once from the definitions.

Let $X$ be an $\cO(S)$-scheme of finite presentation. Analytifying \'etale $X$-schemes induces a morphism of sites $\varepsilon_X:(X^{\an})_{\cl}\to X_{\et}$ from the site of local isomorphisms of $X^{\an}$ (which is equivalent to the usual site of $X^{\an}$, see~\cite[XI, \S4.0]{SGA43}) to the small \'etale site of $X$. If $\L$ is an \'etale sheaf on $X$, we set $\L^{\an}:=\varepsilon_X^*\L$ and we view it as a sheaf on $X^\an$. Pulling back by $\varepsilon _X$ gives rise to comparison morphisms~${H^k_{\et}(X,\L)\to H^k(X^{\an},\L^{\an})}$ for $k\geq 0$ (see \cite[\S 2.2]{Steinsurface}).

\section{The Brauer group of a Stein algebra}

\subsection{Stein algebras and Stein spaces}
\label{parSteinStein}

\begin{prop}
\label{propcompa1}
If $S$ is a Stein space, the natural morphism ${\Br_{\an}(S)\to \Br_{\topo}(S)}$ is an isomorphism.
\end{prop}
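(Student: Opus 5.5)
The plan is to compare the two Brauer groups degree by degree, via the sets $H^1(S,\PGL_n(\cO_S))$ and $H^1(S,\PGL_n(\cC_S))$ that classify degree $n$ holomorphic and topological Azumaya algebras (see \S\ref{pardefBrauer}). Grauert's Oka principle \cite{GrauertOka} applies to principal bundles with complex Lie structure group over a Stein space. For the complex Lie group $G=\PGL_n(\C)$ (and likewise for $\GL_n(\C)$), it says that the natural map from isomorphism classes of holomorphic $G$-bundles on $S$ to isomorphism classes of topological $G$-bundles is bijective. I will take this as the main input and check that it is compatible with Morita equivalence and with tensor products.

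\emph{Surjectivity.} A class in $\Br_{\topo}(S)$ is represented by a topological Azumaya algebra $\cB$ of some degree $n$, that is, by a topological $\PGL_n$-torsor. By the surjectivity half of the Oka principle, this torsor carries a holomorphic structure, so $\cB\simeq\cA\otimes_{\cO_S}\cC_S$ for a holomorphic Azumaya algebra $\cA$ of degree $n$. Hence $[\cA]\mapsto[\cB]$. The natural map is a group morphism because tensor products of Azumaya algebras, and the passage $\cA\mapsto\cA\otimes_{\cO_S}\cC_S$, commute with one another.

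\emph{Injectivity.} Let $\cA$ be a holomorphic Azumaya algebra whose topological class is trivial. Then there are topological vector bundles $\cE,\cF$ and an isomorphism $\cA^{\topo}\otimes\cEnd(\cE)\simeq\cEnd(\cF)$. Using the Oka principle for $\GL_m$, the bundle $\cE$ admits a holomorphic structure $\cE'$, and $\cA':=\cA\otimes\cEnd(\cE')$ is Morita-equivalent to $\cA$. Its topologization is isomorphic to $\cEnd(\cF)$, i.e.\ the $\PGL_N$-torsor of $\cA'$ is topologically isomorphic to that of $\cEnd(\cF'')$ for any holomorphic structure $\cF''$ on $\cF$. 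By the injectivity half of the Oka principle for $\PGL_N$, we get $\cA'\simeq\cEnd(\cF'')$ holomorphically, so $[\cA]=0$ in $\Br_{\an}(S)$.

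The main obstacle is making sure the Oka principle is available in the generality needed. It must hold for arbitrary (possibly singular, possibly infinite-dimensional) reduced or nonreduced Stein spaces and for the non-simply-connected group $\PGL_n(\C)$. Grauert's theorem is stated for Stein spaces and arbitrary complex Lie groups, so I expect this to be a citation rather than an argument. One subtlety is that $\cO_S$ may be nonreduced while $\cC_S$ only sees $S_{\mathrm{red}}$. I would handle this by noting that Grauert's theorem concerns $H^1(S,\cO_S(G))$ for sheaves of holomorphic maps into $G$, which is the correct torsor set for Azumaya algebras over $\cO_S$. If the reference only covers reduced spaces, I would add a reduction step: lift torsors from $S_{\mathrm{red}}$ to $S$ through the nilpotent thickenings, using the vanishing of coherent cohomology on Stein spaces for the kernels $\mathfrak{pgl}_n\otimes\mathcal{I}^k/\mathcal{I}^{k+1}$.
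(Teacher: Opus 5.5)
Your proposal is correct and follows essentially the paper's route. Both arguments rest on Grauert's Oka principle, which gives bijections $H^1(S,\GL_r(\cO_S))\to H^1(S,\GL_r(\cC_S))$ and $H^1(S,\PGL_n(\cO_S))\to H^1(S,\PGL_n(\cC_S))$, and both then check that the induced bijection on isomorphism classes of Azumaya algebras respects Morita equivalence. Your explicit surjectivity and injectivity verification spells out what the paper compresses into that last phrase, and your remark on nonreduced spaces is an extra precaution the paper does not discuss.
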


\begin{proof}
Isomorphism classes of holomorphic and topological vector bundles of rank~$r$ on $S$ are in bijection with $H^1(S,\GL_r(\cO_S))$ and $H^1(S,\GL_r(\cC_S))$ respectively. In addition, isomorphism classes of holomorphic and topological Azumaya algebras of degree~$n$ on $S$ are in bijection with $H^1(S,\PGL_n(\cO_S))$ and $H^1(S,\PGL_n(\cC_S))$ respectively (see \S\ref{pardefBrauer}). The natural morphisms $H^1(S,\GL_r(\cO_S))\to H^1(S,\GL_r(\cC_S))$  and~$H^1(S,\PGL_n(\cO_S))\to H^1(S,\PGL_n(\cC_S))$ are bijective by Grauert's Oka principle (see \cite[Satz 2 p.\,267]{GrauertOka}), hence induce a bijection between isomorphism classes of holomorphic and topological Azumaya algebras on $S$ that respects Morita equivalence.  This proves the proposition.
\end{proof}

\begin{prop}
\label{propcompa2}
If $S$ is a finite-dimensional Stein space, the natural morphism $\Br(\cO(S))\to\Br_{\an}(S)$ is an isomorphism.
\end{prop}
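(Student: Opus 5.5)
We deduce the proposition from the monoidal adjoint equivalence \eqref{monoidaleq} of \cite[Proposition 2.5]{Steinsurface}. The natural morphism sends an Azumaya algebra $A$ over $\cO(S)$ to the coherent $\cO_S$-algebra $A^{\an}=\widetilde{A}^{\an}$, and a locally free $\cO(S)$-module $E$ of finite rank to $E^{\an}$. We compare both sides at the level of objects (Azumaya algebras and vector bundles), and then pass to Morita classes.

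\emph{Step 1: well-definedness.} Let $A$ be Azumaya of degree $n$ over $\cO(S)$. There is then an \'etale cover $\{U_i\to\Spec(\cO(S))\}$ with $A\otimes\cO(U_i)\simeq M_n(\cO(U_i))$. Analytification turns \'etale morphisms into local biholomorphisms, and it is compatible with fiber products and with tensor products of finitely presented modules. Hence $A^{\an}$ becomes isomorphic to $M_n(\cO)$ on the open images $U_i^{\an}\to S$. These images cover $S$, because every point $s\in S$ gives a $\C$-point of $\Spec(\cO(S))$ that lifts to some $U_i$. So $A^{\an}$ is a holomorphic Azumaya algebra. Since the functor is monoidal and sends $\mathrm{End}(E)$ to $\cEnd(E^{\an})$, Morita equivalence is preserved and the map is a group homomorphism.

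\emph{Step 2: essential surjectivity on objects.} This is where finite-dimensionality enters. On a finite-dimensional Stein space, every holomorphic vector bundle $\cE$ is globally finitely presented: it is generated by finitely many global sections, by a Forster-type bound, namely Theorem~B together with the dimension bound on generators. Its kernel is again a vector bundle, hence also finitely generated. A holomorphic Azumaya algebra $\cA$ is in particular a vector bundle, so it is globally finitely presented. By \eqref{monoidaleq}, $\cA\simeq \widetilde{A}^{\an}$ with $A=\cA(S)$, and the algebra structure $\cA\otimes\cA\to\cA$ transports to $A\otimes A\to A$ by full faithfulness and monoidality.

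We must then check that $A$ is Azumaya over $\cO(S)$. Since $\widetilde{A}$ is finitely presented and its analytification is locally free, I expect $A$ to be finite projective; this follows from Forster's results as in \cite[Remark 2.6]{Steinsurface}, checking flatness at all maximal ideals, including the nonprincipal ones. Next, the map $A\otimes A^{\op}\to\mathrm{End}(A)$ is a morphism of finitely presented modules whose analytification is an isomorphism, so it is itself an isomorphism by the equivalence. An Azumaya algebra in the sense of Auslander--Goldman is exactly a finite projective algebra for which this map is an isomorphism, so $A$ is Azumaya.

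\emph{Step 3: injectivity.} Suppose $A^{\an}$ is Morita-trivial, i.e.\ $A^{\an}\otimes\cEnd(\cE)\simeq\cEnd(\cF)$ for holomorphic vector bundles $\cE$ and $\cF$. By Step 2, $\cE$ and $\cF$ come from finite projective modules $E$ and $F$. By full faithfulness of \eqref{monoidaleq}, the isomorphism descends to an algebra isomorphism $A\otimes\mathrm{End}(E)\simeq\mathrm{End}(F)$, so $A$ is Morita-trivial.

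\emph{Main obstacle.} The delicate point is Step 2. One needs global finite presentation of holomorphic vector bundles on finite-dimensional Stein spaces, and the descent of projectivity and the Azumaya property through \eqref{monoidaleq}. Both rely on Forster's work and are exactly where finite-dimensionality is used.
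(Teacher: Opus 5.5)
Your proposal is correct and follows essentially the paper's route: the equivalence \eqref{monoidaleq} restricted to locally free sheaves via \cite[Remark 2.6]{Steinsurface} (which is where finite-dimensionality enters), preservation of Morita equivalence, and the criterion that $\cA\otimes\cA^{\op}\to\cEnd(\cA)$ is an isomorphism, used to transport Azumaya algebras. The only cosmetic difference is in your Step 1, where you show that analytifications of Azumaya algebras are Azumaya via \'etale-local triviality; the paper instead applies the same multiplication-map criterion in both directions.
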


\begin{proof}
For $r\geq 0$, the monoidal equivalences \eqref{monoidaleq} restrict to equivalences between the categories of locally free sheaves of rank $r$ on $\Spec(\cO(S))$ and locally free sheaves of $\cO_S$-modules of rank $r$ (see \cite[Remark 2.6]{Steinsurface}). It follows that~\eqref{monoidaleq} respects Morita equivalence. In addition, the characterization of Azumaya algebras~$\cA$ on~$\Spec(\cO(S))$ (\resp of holomorphic Azumaya algebras on $(S,\cO_S)$) as those locally free algebras such that the algebra morphism~${\cA\otimes\cA^{\op}\to\cEnd(\cA)}$ given by left and right multiplication is an isomorphism (see \cite[I, Th\'eor\`eme~5.1~(ii) et Remarque 5.12]{Grothbrauer}) shows that the two arrows of \eqref{monoidaleq} send Azumaya algebras to Azumaya algebras. We deduce that $\Br(\cO(S))\to\Br_{\an}(S)$ is an isomorphism.
\end{proof}

\subsection{The topological Brauer group of a finite-dimensional CW complex}

Let $T$ be a topological space. Consider the composition
\begin{equation}
\label{BrH3}
\Br_{\topo}(T)\to H^2(T,\cC_T^{\times})_{\tors}\to H^3(T,\Z)_{\tors}
\end{equation}
of \eqref{injBrH2} and of the boundary map of the short exact sequence
$$0\to\Z\xrightarrow{2\pi i}\cC_T\xrightarrow{\exp}\cC_T^{\times}\to 0.$$
It follows from \cite[I, pp.\,48-50]{Grothbrauer} or \cite[Chap.\,V, \S 4.6]{Giraud} that the composition~\eqref{BrH3} is also induced by the compositions
\begin{equation}
\label{PGLH2H3}
H^1(T,\PGL_n(\cC_T))\to H^2(T,\Z/n)\to H^3(T,\Z)[n]
\end{equation}
of the boundary maps associated with the short exact sequences 
$$0\to\Z/n\xrightarrow{1\mapsto\exp(\frac{2\pi i}{n})\Id_n}\SL_n(\cC_T)\to\PGL_n(\cC_T)\to 0\textrm{ and }0\to\Z\xrightarrow{n}\Z\to\Z/n\to 0.$$
Serre proved that the composition \eqref{BrH3} is an isomorphism if~$T$ is a finite CW complex (see \cite[I, Corollaire~1.7]{Grothbrauer}). We relax this finiteness hypothesis.

\begin{thm}
\label{th2+}
Let $T$ be a finite-dimensional CW complex. Then the canonical morphisms $\Br_{\topo}(T)\to H^2(T,\cC_T^{\times})_{\tors}\to H^3(T,\Z)_{\tors}$ are isomorphisms.
\end{thm}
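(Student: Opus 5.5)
The plan has two parts. First I show that the boundary map $H^2(T,\cC_T^{\times})\to H^3(T,\Z)$ is an isomorphism. Then I show that every torsion class in $H^3(T,\Z)$ comes from an Azumaya algebra; injectivity of \eqref{injBrH2} then gives the rest.

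For the first part, a CW complex is paracompact Hausdorff, so $\cC_T$ is a fine sheaf and $H^k(T,\cC_T)=0$ for $k>0$. The exponential sequence therefore yields $H^2(T,\cC_T^{\times})\isoto H^3(T,\Z)$, where $H^3(T,\Z)$ is sheaf cohomology. Sheaf cohomology agrees with singular cohomology on CW complexes, since they are locally contractible and paracompact. Restricting to torsion gives the second isomorphism. Because \eqref{injBrH2} is injective, the composition \eqref{BrH3} is injective, and it remains to prove surjectivity onto $H^3(T,\Z)_{\tors}$.

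For surjectivity, fix $\beta\in H^3(T,\Z)$ with $n\beta=0$ and lift it to $\gamma\in H^2(T,\Z/n)$. By the description \eqref{PGLH2H3}, it suffices to find an $m$ with $n\mid m$ and a $\PGL_m(\cC_T)$-torsor whose class in $H^3(T,\Z)$ equals $\beta$. Equivalently, I need a map $T\to B\PGL_m(\C)$ realizing $\beta$. Using classifying spaces, $\gamma$ corresponds to a map $f:T\to K(\Z/n,2)$ and $\beta$ to a map $T\to K(\Z,3)$. For each $m$ there is a natural map $B\PGL_m(\C)\to K(\Z,3)$ classifying the universal obstruction class. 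The task is to lift $T\to K(\Z,3)$, or better $f$ composed with $K(\Z/n,2)\to K(\Z/n^k,2)$ for suitable $k$, through $B\PGL_m(\C)$ for some $m$ divisible by $n$.

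Following Serre's argument for finite complexes, the natural strategy is to use the maps $B\PGL_{n^k}(\C)\to K(\Z/n^k,2)$, or the colimit $B\PGL_{n^\infty}:=\colim_k B\PGL_{n^k}(\C)$ along the tensor-with-identity maps $\PGL_{n^k}\to\PGL_{n^{k+1}}$. The rational homotopy of $B\PGL_{m}(\C)$ sits in even degrees $\geq4$, and these classes are killed in the colimit up to $n$-localisation. The resulting map $B\PGL_{n^\infty}\to K(\Z/n^\infty,2)$ is then an equivalence after suitable localisation, or at least highly connected in a range.

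\textbf{This is the main obstacle.} Since $T$ is infinite, compactness cannot factor a map through a finite stage. Instead I use finite-dimensionality: if $\dim T=d$, obstruction theory for lifting along $B\PGL_{n^k}(\C)\to K(\Z/n^k,2)$ places the obstructions in $H^{j+1}(T,\pi_j(F_k))$ for $j<d$, where $F_k$ is the fiber. I will show that for each $j$ and $d$ there is a $k$ such that $\pi_j(F_k)\to\pi_j(F_{k'})$ vanishes for $k'\gg k$. This should follow from the computation of $\pi_*(B\SU_{m})$: the torsion-free parts, which are the Bott classes, get multiplied by powers of $n$ under the block-diagonal maps. Passing successively to $k'\gg k$ kills each obstruction in turn. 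Only finitely many stages are needed because only degrees $\leq d$ matter.

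If this stabilisation argument is too delicate, I would instead use a skeletal induction. Choose a CW structure and apply Serre's finite result cell-dimension by cell-dimension, using the fact that a $d$-dimensional CW complex maps to the target up to $d$-equivalence. The required uniformity in $k$ comes from the finiteness of $d$, not of the number of cells.
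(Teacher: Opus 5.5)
Your first step (fineness of $\cC_T$, the exponential sequence, and injectivity of \eqref{injBrH2}) is correct and matches the paper. The surjectivity argument, however, has a genuine gap.

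The lemma you intend to prove is false. The fiber $F_k$ of $\BPGL_{n^k}(\C)\to K(\Z/n^k,2)$ is $\BSL_{n^k}(\C)$. The map $F_k\to F_{k'}$ induced by $A\mapsto A\otimes I_{n^{k'-k}}$ is, up to conjugation, an $n^{k'-k}$-fold block sum. So on the stable groups $\pi_{2i}(F_k)\simeq\Z$ it is multiplication by $n^{k'-k}$, which is injective and never zero. You observe this multiplication yourself, but it does not give vanishing. For the same reason nothing is killed in $\colim_k\BPGL_{n^k}(\C)$: its $\pi_{2i}$ for $i\geq 2$ is $\Z[1/n]$.

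At best you could hope that the obstruction \emph{classes}, not the coefficient maps, are $n$-primary torsion and die after enough multiplication by $n$. Only the primary obstruction is visibly pulled back from a finite $n$-primary group, namely $H^5(K(\Z/n^k,2),\Z)$. The higher obstructions are pulled back from Moore--Postnikov stages through lifts chosen at earlier stages. You would have to show they are $n$-primary, which is not automatic since these stages have $\pi_4\simeq\Z$. You would also have to make the choices compatibly as $k$ grows. None of this is carried out.

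Your fallback is too vague to fill the gap. Serre's theorem cannot be applied to the skeleta of $T$, which are still infinite complexes.

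The missing idea is to put the finiteness in the target rather than the source. Choose a model of $K(\Z/n,2)$ with finitely many cells in each dimension. Its $d$-skeleton $K(\Z/n,2)_{\leq d}$ is then a \emph{finite} CW complex. By cellular approximation, the classifying map of your class $\gamma\in H^2(T,\Z/n)$ is homotopic to a map $f\colon T\to K(\Z/n,2)_{\leq d}$. Thus $\gamma$ is the pullback by $f$ of the restriction of the tautological class $\gamma_0$.

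Serre's theorem on this finite complex gives a topological Azumaya algebra whose class is the restriction of $\partial\gamma_0$. Its pullback by $f$ is an Azumaya algebra on $T$ with class $\partial\gamma=\beta$, by naturality of $\partial$. This is exactly the paper's proof, and it needs no stabilisation in $k$ and no obstruction theory over $T$.
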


\begin{proof}
As $T$ is paracompact (see \cite{Miyazaki}), the sheaf $\cC_T$ is fine. It follows that $H^2(T,\cC_T)=H^3(T,\cC_T)=0$, so $H^2(T,\cC_T^{\times})\to H^3(T,\Z)$ is an isomorphism. The morphism $\Br_{\topo}(T)\to H^2(T,\cC_T^{\times})_{\tors}$ is injective (see \cite[I, Proposition 1.4~3\textsuperscript{o}]{Grothbrauer}).

To prove the surjectivity of $\Br_{\topo}(T)\to H^3(T,\Z)_{\tors}$, fix $\alpha\in H^3(T,\Z)[n]$. Write~$\alpha=\partial(\beta)$, where~$\partial$ is the boundary map associated to the long exact sequence of cohomology of $0\to\Z\xrightarrow{n}\Z\to\Z/n\to 0$. Choose an Eilenberg--MacLane CW complex $K(\Z/n,2)$ with finitely many cells in each dimension (see \cite[\S II.6 p.\,36]{Thom}). Let~$d$ be the dimension of~$S$ and let $i:K(\Z/n,2)_{\leq d}\hookrightarrow K(\Z/n,2)$ be the inclusion of the~$d$\nobreakdash-skeleton of $K(\Z/n,2)$. Let $\beta_0\in H^2(K(\Z/n,2),\Z/n)$ be the tautological class. By cellular approximation~\cite[Theorem 4.8]{Hatcher}, one can find a continuous map $f:T\to K(\Z/n,2)_{\leq d}$ with~$f^*i^*\beta_0=\beta$. By Serre's theorem (see \cite[I, Corollaire~1.7]{Grothbrauer}), the class $i^*\partial(\beta_0)\in H^3(K(\Z/n,2)_{\leq d},\Z)_{\tors}$ comes from~$\Br_{\topo}(K(\Z/n,2)_{\leq d})$. It follows that $\alpha=f^*i^*\partial(\beta_0)$ comes from $\Br_{\topo}(T)$.
\end{proof}

\begin{rem}
Our proof of Theorem \ref{th2+} reduces the case of finite-dimensional CW complexes to the case of finite CW complexes dealt with by Serre. However, the argument of Serre appearing in \cite[I, Corollaire~1.7]{Grothbrauer} does not apply directly to infinite finite-dimensional CW complexes, for the following reason.

Set $\PGL_{\infty}(\C):=\colim_{n\geq 1}\PGL_n(\C)$, where transition maps are given by tensorization by unit matrices. A continuous map $T\to \BPGL_{\infty}(\C)$ lifts to $\BPGL_n(\C)$ for some $n$ if $T$ is a finite CW complex (Serre uses this), but not necessarily so if $T$ is only finite-dimensional. A counterexample is given by taking $T$ to be the $3$-skeleton of $\BPGL_{\infty}(\C)$. Indeed, as the canonical class in~$H^2(\BPGL_{\infty}(\C),\Q/\Z)$ is not torsion (because its restriction to $H^2(\BPGL_{n}(\C),\Q/\Z)$ has order exactly~$n$), its restriction to~$H^2(T,\Q/\Z)$ is not torsion, and hence the natural map $T\to\BPGL_{\infty}(\C)$ cannot lift to a map $T\to\BPGL_n(\C)$ for any $n$.
% This is not a contradiction with our result because, as the canonical class in $H^2(\BPGL_{\infty},\Q/\Z)$ is not torsion, it does not give rise to a class in $H^3(T,\Z)_{\tors}$. 
 \end{rem}
 
\begin{thm}
\label{cor1+}
Let $S$ be a finite-dimensional Stein space. The natural morphisms
$$\Br(\cO(S))\to \Br_{\an}(S)\to\Br_{\topo}(S)\to H^3(S,\Z)_{\tors}$$
are isomorphisms.
\end{thm}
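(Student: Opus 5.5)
The plan is to chain together three results already established, one for each arrow. The first arrow, $\Br(\cO(S))\to\Br_{\an}(S)$, is an isomorphism by Proposition~\ref{propcompa2}; this is where the finite-dimensionality of $S$ is used. The second arrow, $\Br_{\an}(S)\to\Br_{\topo}(S)$, is an isomorphism by Proposition~\ref{propcompa1}, which holds for any Stein space. For the third arrow, $\Br_{\topo}(S)\to H^3(S,\Z)_{\tors}$, I will apply Theorem~\ref{th2+}. This requires knowing that the underlying topological space of $S$ is, or is suitably equivalent to, a finite-dimensional CW complex.

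That topological input is the one step needing care, and I expect it to be the main obstacle. A complex space $S$ of finite dimension $d$ admits a triangulation (Łojasiewicz, or Giesecke for complex spaces) compatible with its analytic structure. Its underlying space is therefore homeomorphic to a locally finite simplicial complex of real dimension $2d$, in particular a finite-dimensional CW complex. Theorem~\ref{th2+} then applies directly to $T=S$.

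The one compatibility to check is that Theorem~\ref{th2+} is stated for the sheaf $\cC_T$ and the cohomology $H^3(T,\Z)$ on the CW complex, and these depend only on the homeomorphism type. Likewise $\Br_{\topo}(S)$ is defined via the sheaf $\cC_S$ on the same space. The map in Theorem~\ref{th2+} is therefore exactly the third arrow in the statement.

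If one prefers to avoid triangulations, an alternative is to use that a Stein space of dimension $d$ has the homotopy type of a CW complex of dimension at most $d$ (Hamm's theorem). In that case one must also check that $\Br_{\topo}$ is a homotopy invariant, via the classification of $\PGL_n(\C)$-bundles on paracompact spaces, so that Theorem~\ref{th2+} transfers along the homotopy equivalence. The triangulation route is cleaner, so I would use it.

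Finally, I will verify that the composite of the three arrows is the "natural morphism" of Theorem~\ref{cor1}. This holds because each map is induced by the compatible maps $\cO\to\cO_S\to\cC_S$ on $H^1(-,\PGL_n)$ followed by the boundary maps~\eqref{PGLH2H3}.
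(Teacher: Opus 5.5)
Your argument is correct and has the same skeleton as the paper's proof, which simply chains Proposition~\ref{propcompa2}, Proposition~\ref{propcompa1} and Theorem~\ref{th2+}. The only difference is the topological input for the last arrow.

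The paper cites Hamm's theorem that $S$ has the homotopy type of a CW complex of dimension $\leq\dim S$. It then tacitly transports Theorem~\ref{th2+} along that homotopy equivalence, which uses homotopy invariance of $\Br_{\topo}$ and of $H^3(-,\Z)$ on paracompact spaces, exactly as in your alternative.

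You instead use triangulability of complex spaces to get an honest homeomorphism with a simplicial complex of real dimension $2\dim S$. Theorem~\ref{th2+} then applies verbatim and no transfer argument is needed. The cost is a heavier classical theorem and a weaker dimension bound. That is harmless here, where only finiteness of the dimension matters. Hamm's sharper bound is what the paper needs elsewhere, in Corollaries~\ref{corvanish2} and~\ref{cor:per-ind-bound}.

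One small point of phrasing: finite-dimensionality is used for the third arrow as well as the first, since Theorem~\ref{th2+} fails for infinite-dimensional complexes. Your triangulation step does use it, so nothing is missing from the argument.
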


\begin{proof}
As $S$ has the homotopy type of a finite-di\-men\-sion\-al CW complex (see \cite[Korollar]{Hamm}), this follows from Propositions \ref{propcompa1} and \ref{propcompa2} and Theorem~\ref{th2+}.
\end{proof}

\subsection{The case of infinite-dimensional Stein spaces}

If the Stein space $S$ is not assumed to be finite-dimensional, then  $\Br(\cO(S))\to\Br_{\topo}(S)$ (equivalently $\Br(\cO(S))\to\Br_{\an}(S)$ by Proposition \ref{propcompa1}) is neither injective nor surjective in general. In this paragraph, we provide such examples for the sake of completeness. 

The following theorem is due to Eliashberg \cite{Eliashberg} and Gompf \cite{Gompf}.

\begin{thm}
\label{lemStein}
Let $T$ be a countable CW complex of dimension $n$. There exists a Stein manifold~$S$ of dimension $n$ with the same homotopy type as~$T$.
%If one does not insist that $S$ has dimension n, much easier proof, but hard to locate in literature. Only need to find a smooth manifold, as can then complexify (use Grauert and Mihalache). To do so, embed a locally finite countable simplicial complex in R^{2n+1}, triangulate R^{2n+1} compatibly (reference ? [Munkres, Elementary differential topology, Problem 7.11]) and take a regular neighborhood after barycentric subdivision.
\end{thm}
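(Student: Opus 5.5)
The plan is to pass from the CW complex to a smooth manifold, and then from that manifold to a Stein manifold, keeping the dimension equal to $n$ at both stages.

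\emph{Step 1: a smooth model.} Replace $T$ by a homotopy-equivalent countable, locally finite simplicial complex of dimension $n$. This uses the standard fact that a countable CW complex is homotopy equivalent to a countable simplicial complex of the same dimension, which one can arrange to be locally finite. Embed this complex as a closed subset of some $\R^N$ with $N\geq 2n+1$, triangulated compatibly. A regular neighbourhood $M$ of it, taken after barycentric subdivision and smoothed, is then a smooth open manifold that deformation retracts onto the complex. It has the homotopy type of $T$ and admits a proper Morse function whose critical points all have index at most $n$, namely one handle for each simplex.

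\emph{Step 2: the naive Stein structure.} Grauert's theorem shows that any real-analytic manifold has a Stein tubular neighbourhood in its complexification. A Stein neighbourhood of the real-analytic model of $M$ (or of the totally real polyhedron) therefore gives a Stein manifold homotopy equivalent to $T$. Its complex dimension, however, equals the real dimension of $M$, which is about $2n+1$ rather than $n$.

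\emph{Step 3: lowering the dimension.} This is the main obstacle, and here I would invoke Eliashberg's characterization of Stein manifolds. A smooth manifold $W$ of real dimension $2m$, with $m>2$, admits a Stein structure if and only if two conditions hold: it carries an almost complex structure, and it has a proper Morse function whose critical points all have index at most $m$.

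Take $m=n$ and build $W$ as the total space of $TM'\otimes\C$ restricted to an $n$-dimensional thickening. Concretely, thicken the $n$-dimensional complex to a $2n$-dimensional handlebody $W$ that has only handles of index at most $n$, one for each simplex. The complexified tangent bundle of the handlebody provides the almost complex structure: one attaches handles along isotropic spheres with their normal framings complexified. Eliashberg's theorem then gives a Stein structure on $W$, and $W$ retracts onto the complex.

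Two small cases need separate treatment.
\begin{itemize}
\item For $n\leq 2$, Eliashberg's theorem in real dimension $4$ only yields Stein structures up to homeomorphism. Gompf's work on Stein surfaces via Casson handles is what produces a complex dimension $2$ model with the right homotopy type.
\item For $n=1$, any noncompact Riemann surface is Stein, so it suffices to take a thickened graph.
\end{itemize}
Both theorems are stated for compact handlebodies. Countably many handles are handled by the exhaustion version of the theorems, and I expect verifying the hypotheses for infinite handlebodies to be the delicate point.
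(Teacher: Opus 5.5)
Your overall architecture matches the paper's: a $2n$-dimensional open handlebody with handles of index $\le n$, then Eliashberg for $n\geq 3$, Gompf for $n=2$, and a direct argument for $n=1$. But Step~3 has a genuine gap exactly where an idea is needed. You never produce a $2n$-dimensional thickening of $T$ that admits an almost complex structure, which Eliashberg's theorem requires (for $n=2$ you at least need orientability for Gompf's theorem).

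\begin{itemize}
\item The complexified tangent bundle does not supply it. $TW\otimes_{\R}\C$ is a complex bundle for every $W$, of complex rank $2n$, and says nothing about a complex structure on $TW$ itself.
\item The structure is not automatic for handlebodies with handles of index $\le n$. Attaching a $1$-handle to $D^{2n}$ in an orientation-reversing way already gives a non-orientable, hence non-almost-complex, example.
\item Building $W$ directly from Weinstein handles along isotropic spheres does not avoid the issue. The almost complex structure must extend over each new handle, and this is a real obstruction you do not address. For instance, before attaching a $3$-handle, $c_1$ of the structure built so far must vanish on the attaching $2$-sphere.
\item Taking $W$ to be the tangent bundle of an $n$-manifold $M'$ would give an almost complex structure. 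But $T$ is in general not homotopy equivalent to any $n$-manifold, e.g.\ $S^1\vee S^2$ for $n=2$.
\item The existence of a $2n$-dimensional thickening is itself asserted without proof. It cannot be taken as a regular neighbourhood in $\R^{2n}$, since an $n$-dimensional complex need not embed there.
\end{itemize}

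The paper settles both points with one observation. By general position, running the proof of the embedding theorem in $\R^{2n+1}$ but in $\R^{2n}$, a locally finite countable $n$-dimensional simplicial complex admits a linear \emph{immersion} into $\R^{2n}$. A tubular neighbourhood of this immersion has the following properties:
\begin{itemize}
\item it is an open $2n$-manifold with the homotopy type of $T$;
\item it is the interior of a handlebody with one handle of index $\dim\sigma\le n$ for each simplex $\sigma$;
\item it immerses in $\R^{2n}$ in codimension zero, so it is parallelizable, and in particular almost complex and orientable.
\end{itemize}
Eliashberg's theorem ($n\geq 3$) and Gompf's ($n=2$) then apply directly. Both are formulated for open manifolds (proper Morse functions, possibly infinite handlebodies), so the infinite case you flag is not an extra difficulty. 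With this in hand, your Steps~1 and~2 (the embedding in $\R^N$ with $N\geq 2n+1$, and Grauert's theorem) can be dropped entirely.
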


\begin{proof}
When $n=1$, one can take $S$ to be a disjoint union of copies of $\C$, minus a discrete subset. Assume that $n\geq 2$. By \cite[Theorem 13]{Whitehead}, we may assume that $T$ is a locally finite countable simplicial complex of dimension $n$. The proof of~\cite[Chap.\,3, \S2, Theorem~9]{Spanier} (applied in $\R^{2n}$ instead of $\R^{2n+1}$) shows that one can immerse~$T$ linearly in $\R^{2n}$. An appropriate tubular neighborhood $S$ of~$T$ in this immersion is a parallelizable manifold of dimension~$2n$ with the homotopy type of $T$, that is the interior of a handlebody (in the sense of \cite[\S2]{Gompfsurvey}) all of whose handles have index $\leq n$. Applying \cite[Theorem 1.3.1]{Eliashberg} when~${n\geq 3}$ and \cite[Theorem~3.1]{Gompf} when $n=2$ (see also \cite[Corollary~2.2 and Theorem~2.4]{Gompfsurvey}) shows that $S$ is homeomorphic to a Stein manifold of dimension~$n$.
\end{proof}

\begin{lem}
\label{lemBorel}
There exist a countable CW complex $T$ with~$H^2(T,\Z)=0$ and a rank~$2$ complex vector bundle $\cE$ on $T$ such that the complex vector bundle~$\cEnd(\cE)$ is trivial, and such that $c_2(\cE)\in H^4(T,\Z)$ is not nilpotent in the ring $H^*(T,\Z)$.
\end{lem}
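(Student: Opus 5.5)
The plan is to build $T$ as a homotopy fiber, so that the triviality of $\cEnd(\cE)$ is forced by construction, and then to detect non-nilpotence of $c_2$ by pulling back along a flat bundle over $\R\P^\infty$.

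\textbf{Construction.} Let $\cE_0$ be the tautological rank $2$ bundle on $\BSU(2)\simeq\mathbb{H}\P^\infty$. Its endomorphism bundle $\cEnd(\cE_0)=\cE_0\otimes\cE_0^{*}$ has rank $4$ and trivial determinant, so it is classified by a map $\phi:\BSU(2)\to\BSU(4)$. I will take $T$ to be the homotopy fiber of $\phi$ and $\cE$ to be the pullback of $\cE_0$ to $T$. The composite $T\to\BSU(2)\to\BSU(4)$ is canonically null-homotopic, so $\cEnd(\cE)$ is trivial. Since $\BSU(2)$ and $\BSU(4)$ have countable CW models, $T$ has the homotopy type of a countable CW complex by Milnor's theorem, and we replace $T$ by such a model.

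\textbf{Vanishing of $H^2$.} I will use the long exact homotopy sequence of the fibration $T\to\BSU(2)\to\BSU(4)$. We have $\pi_1$ and $\pi_2$ of $\BSU(2)$ equal to $0$, and $\pi_2(\BSU(4))=\pi_3(\BSU(4))=0$ because $\pi_1(\SU(4))=\pi_2(\SU(4))=0$. It follows that $\pi_1(T)=\pi_2(T)=0$. Hurewicz then gives $H_1(T,\Z)=H_2(T,\Z)=0$, and the universal coefficient theorem gives $H^2(T,\Z)=0$.

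\textbf{Non-nilpotence of $c_2(\cE)$.} Let $L$ be the complex line bundle on $\R\P^\infty$ given by the sign character of $\Z/2$. The flat bundle $L\oplus L$ comes from the representation $\Z/2\to\SU(2)$, $-1\mapsto-\Id_2$, so it is pulled back from $\cE_0$ along some map $g:\R\P^\infty\to\BSU(2)$. Because this representation lands in the center of $\SU(2)$, the adjoint representation is trivial. Hence $\phi\circ g$ factors through $B$ of the trivial homomorphism $\Z/2\to\SU(4)$, which is a canonical null-homotopy, and $g$ lifts to a map $\tilde g:\R\P^\infty\to T$ with $\tilde g^*\cE\cong L\oplus L$.

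Consequently $\tilde g^*c_2(\cE)=c_1(L)^2=x^2$, where $x$ is the generator of $H^2(\R\P^\infty,\Z)=\Z/2$. Since $H^*(\R\P^\infty,\Z)=\Z[x]/(2x)$, all powers $x^{2k}$ are nonzero. Therefore $c_2(\cE)$ is not nilpotent in $H^*(T,\Z)$.

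The step needing the most care is the lift $\tilde g$, specifically checking that the null-homotopy of $\phi\circ g$ coming from the trivial adjoint representation is genuine. Functoriality of $B$ applied to the commutative square of groups $\Z/2\to\SU(2)\to\SU(4)$, whose composite is the trivial homomorphism, handles this. If one prefers to avoid that bookkeeping, it is enough to note that $T\to\BSU(2)$ is the pullback of the path fibration, so any choice of null-homotopy of $\phi\circ g$ yields a lift, and the conclusion about $c_2$ does not depend on which lift is chosen.
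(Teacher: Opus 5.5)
Your proof is correct. It uses the same construction as the paper: take the homotopy fiber of the map classifying the endomorphism (or adjoint) bundle, and pull back the universal bundle. Both verifications are done differently, though.

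The paper works with the adjoint map $\BSL_2(\C)\to\BGL_3(\C)$ and gets the two properties as follows.
\begin{enumerate}
\item It obtains $H^2(T,\Z)=0$ from the Leray spectral sequence of $\GL_3(\C)\to T\to\BSL_2(\C)$. Its $T$ is not simply connected, since $\pi_1(T)\cong\pi_1(\GL_3(\C))\cong\Z$.
\item It proves non-nilpotence by showing that the mod $2$ spectral sequence degenerates. This uses Borel's result that the generators of $H^*(\GL_3(\C),\Z/2)$ transgress to mod $2$ Chern classes of $\cEnd(\cF)$, which vanish. The output is the stronger statement that $\Z/2[c_2(\cF)]\to H^*(T,\Z/2)$ is injective.
\end{enumerate}

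Your choices lead to simpler arguments.
\begin{enumerate}
\item With target $\mathrm{BSU}(4)$, the fiber $T$ is $2$-connected, so $H^2(T,\Z)=0$ follows directly from the homotopy exact sequence and Hurewicz.
\item The test map $\R\P^\infty\to T$ comes from the central subgroup $\{\pm\Id_2\}$, which acts trivially by conjugation on $M_2(\C)$. It replaces the transgression input by the elementary ring $H^*(\R\P^\infty,\Z)=\Z[x]/(2x)$.
\end{enumerate}
Your remark that any null-homotopy of $\phi\circ g$ gives a lift with $\tilde g^*\cE=g^*\cE_0$ correctly makes the conclusion independent of the choice of lift. Existence of a null-homotopy is clear if you take $\phi=B\rho$, with $\rho$ the representation on $\C^2\otimes(\C^2)^*$.

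Your method only detects the powers of $c_2(\cE)$, not the whole mod $2$ polynomial ring. That is all the lemma needs.
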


\begin{proof}
Choose $T$ to be a homotopy fiber of the adjoint map~${\BSL_2(\C)\to\BGL_3(\C)}$. Let $\cF$ be the universal vector bundle on~$\BSL_2(\C)$ and let $\cE$ be its pull-back to $T$. 
With integral coefficients, the Leray spectral sequence for the fibration sequence $\GL_3(\C)\to T\to \BSL_2(\C)$ shows that~$H^2(T,\Z)=0$. With coefficients $\Z/2$, it reads
\begin{equation}
\label{LerayBorel}
E_2^{p,q}=H^p(\BSL_2(\C),\Z/2)\otimes H^q(\GL_3(\C),\Z/2)\Rightarrow H^{p+q}(T,\Z/2).
\end{equation}
Generators of the exterior algebra $H^*(\GL_3(\C),\Z/2)$ transgress to some Chern polynomials of~$\cEnd(\cF)$ modulo $2$ (see \cite[\S19]{Borel}), which vanish. It follows that~\eqref{LerayBorel} degenerates. The pull-back map $\Z/2[c_2(\cF)]=H^*(\BSL_2(\C),\Z/2)\to H^*(T,\Z/2)$ is therefore injective. This concludes the proof of the lemma.
\end{proof}

\begin{prop}
\label{propnotinj}
There exists a Stein space $S$ such that the natural morphism $\Br(\cO(S))\to\Br_{\an}(S)$ is not injective.
\end{prop}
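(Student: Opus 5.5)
Take $T$ and $\cE$ as in Lemma~\ref{lemBorel}. Since $T$ is countable, after replacing it by a homotopy-equivalent countable CW complex we may write it as an increasing union of finite-dimensional skeleta $T_{\leq m}$. We cannot apply Theorem~\ref{lemStein} directly, because $T$ is infinite-dimensional. Instead, for each $m$ we choose a Stein manifold $S_m$ of dimension $m$ homotopy equivalent to $T_{\leq m}$, and we set $S:=\bigsqcup_m S_m$. This is an infinite-dimensional Stein space with $\cO(S)=\prod_m\cO(S_m)$.

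Next we transport $\cE$ holomorphically. Pull $\cE|_{T_{\leq m}}$ back to a topological bundle on $S_m$; by Grauert's Oka principle (as in Proposition~\ref{propcompa1}) it is isomorphic to a holomorphic rank $2$ bundle $\cE_m$. Since $\cEnd(\cE)$ is topologically trivial, the Oka principle again shows that $\cEnd(\cE_m)\simeq M_2(\cO_{S_m})$ holomorphically. Hence the Azumaya algebra $\cA:=\cEnd(\cE_S)$ is holomorphically isomorphic to $M_2(\cO_S)$, and its class in $\Br_{\an}(S)$ is trivial.

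On the algebraic side, the global sections $A:=\prod_m\cEnd(\cE_m)(S_m)$ form an Azumaya $\cO(S)$-algebra of degree $2$, because it is isomorphic to $M_2(\cO(S))$. That is not what we want, so the construction has to be twisted. The correct object is the $\cO(S)$-module $P:=\prod_m \cE_m(S_m)$, which is projective but possibly \emph{not finitely generated}, and we take an algebra built from it. More precisely, I would look for an Azumaya $\cO(S)$-algebra whose analytification is trivial but which is not Morita trivial over $\Spec(\cO(S))$. The cleanest way to get one is to use Brauer classes of the form $\cEnd(E)$, where $E$ is finitely generated projective, and this is Morita trivial, so that is not the route.

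Instead I would use the difference between the two sides. A class that is trivial in $\Br_{\an}(S)$ is given by a holomorphic vector bundle $\cV$ on $S$ with $\cA^{\an}\simeq\cEnd(\cV)$. For the class to be trivial in $\Br(\cO(S))$ we need $\cV$ to be globally finitely presented, so that it comes from $\Spec(\cO(S))$ via \eqref{monoidaleq}. So the goal is an Azumaya algebra $A$ over $\cO(S)$, namely $A=M_2(\cO(S))$ with a twist, whose analytification is $\cEnd(\cV)$ for a bundle $\cV$ of \emph{unbounded} "complexity".

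The plan is to take $\cV$ to be the bundle that equals $\cE_m$ on $S_m$, so that $\cEnd(\cV)\simeq M_2(\cO_S)$. Then we argue that $\cV\otimes\cL$ is not globally finitely presented for any line bundle $\cL$. Since $H^2(S_m,\Z)=H^2(T_{\leq m},\Z)=0$ for $m\geq 3$, line bundles are trivial there and cannot help. Global finite presentation would force $\cV$ to be a quotient of $\cO_S^{\oplus N}$ for a single $N$. In that case $\cV$ would have a complement of rank $N-2$, so the total Chern class $c(\cV)$ would have an inverse whose components vanish in degrees $>2(N-2)$. Because $c_2(\cE)$ is not nilpotent and $c_1=0$, the inverse $(1+c_2)^{-1}=\sum(-c_2)^k$ has nonzero terms in arbitrarily high degree on $S_m$ as $m\to\infty$. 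This gives the contradiction.

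Finally we must check that the algebra $M_2(\cO(S))$, viewed through $\cEnd(\cV)$, really gives a nontrivial Morita class in $\Br(\cO(S))$. The point is that the algebra $\prod_m\cEnd(\cE_m)(S_m)$ is isomorphic to $M_2(\cO(S))$, but it could be trivialized differently. The main obstacle, as I see it, is to set this up correctly. I expect the right statement to be this: a degree $2$ Azumaya algebra $B$ over $\cO(S)$ whose analytification is $\cEnd(\cV)$ is Morita trivial iff $B\simeq\cEnd(P)$ with $P$ finitely generated projective, and the Skolem--Noether/Picard argument shows that $P^{\an}\simeq\cV\otimes\cL$. I would construct $B$ as the $\cO(S)$-algebra of global sections of $\cEnd(\cV)$, which is finitely presented because it is free. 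Its analytification is $\cEnd(\cV)$, so it is trivial in $\Br_{\an}(S)$. If it were Morita trivial, the Chern class argument above would be contradicted. The delicate step is justifying the "$P^{\an}\simeq\cV\otimes\cL$" step using \eqref{monoidaleq} and Morita theory with non-finitely-generated modules.
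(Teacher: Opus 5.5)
The algebra in your last paragraph is the right one, and it is exactly the paper's. It is $B=H^0(S,\cEnd(\cE_S))$, free of rank $4$ over $\cO(S)$. It is trivial in $\Br_{\an}(S)$, and triviality in $\Br(\cO(S))$ would make $\cE_S$ a quotient of some $\cO_S^{\oplus N}$, which the non-nilpotence of $c_2$ rules out.

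However, the proposal as written rests on a false claim that makes it self-contradictory. Lemma~\ref{lemBorel} says $\cEnd(\cE)$ is trivial as a \emph{vector bundle}. The Oka principle therefore gives $\cEnd(\cE_m)\simeq\cO_{S_m}^{\oplus 4}$ as $\cO_{S_m}$-modules, not $\cEnd(\cE_m)\simeq M_2(\cO_{S_m})$ as algebras. The latter is false for $m\geq 4$: an algebra isomorphism $\cEnd(\cE_m)\simeq\cEnd(\cO_{S_m}^{\oplus 2})$ would give $\cE_m\simeq\cL^{\oplus 2}$ for a line bundle $\cL$. That $\cL$ is trivial because $H^2(S_m,\Z)=0$, so $c_2(\cE_m)=0$, contradicting $c_2(\cE)\neq 0$.

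So $A=\prod_m\cEnd(\cE_m)(S_m)$ is \emph{not} isomorphic to $M_2(\cO(S))$. It is a non-standard algebra structure on $\cO(S)^{\oplus 4}$, it equals your $B$, and it is precisely the algebra you want. Consequently:
\begin{itemize}
\item your remark ``that is not what we want, so the construction has to be twisted'' is wrong;
\item the non-finitely generated module $P=\prod_m\cE_m(S_m)$ plays no role;
\item your last paragraph (where $B=A$ is Morita nontrivial) contradicts your third (where $A\simeq M_2(\cO(S))$).
\end{itemize}

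Once this is corrected, two steps still need the arguments the paper supplies.

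\emph{Azumaya property.} This can no longer be read off from a matrix-algebra structure. Since $A^{\an}\simeq\cEnd(\cE_S)$ is Azumaya and \eqref{monoidaleq} is a monoidal equivalence, the morphism $A\otimes A^{\op}\to\mathrm{End}(A)$ is an isomorphism because its analytification is. You then conclude with the criterion used in the proof of Proposition~\ref{propcompa2}.

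\emph{Your ``delicate step''.} This needs no Morita theory for non-finitely generated modules. If $[A]=0$, the exact sequence
\[
H^1_{\et}(\Spec(\cO(S)),\GL_2)\to H^1_{\et}(\Spec(\cO(S)),\PGL_2)\to H^2_{\et}(\Spec(\cO(S)),\G_m)
\]
gives a rank $2$ vector bundle $\cF$ on $\Spec(\cO(S))$ with $A\simeq\cEnd(\cF)$, hence $\cEnd(\cE_S)\simeq\cEnd(\cF^{\an})$ as algebras. The exact sequence $H^1(S,\cO_S^{\times})\to H^1(S,\GL_2(\cO_S))\to H^1(S,\PGL_2(\cO_S))$ then gives $\cE_S\simeq\cF^{\an}\otimes\cL$. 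Here $\cL$ is trivial because $H^2(S,\Z)=0$ once the $S_m$ with $m<3$ are discarded, as the paper does. Since $\cF$ is generated by finitely many sections, so is $\cE_S$, and your Chern class argument finishes the proof.
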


\begin{proof}
Let $T$ and $\cE$ be as in Lemma \ref{lemBorel}. Use Theorem \ref{lemStein} to find a Stein space~$S_i$ of the homotopy type of the $i$th skeleton of $T$. Let $S$ be the disjoint union of the~$(S_i)_{i\geq 3}$ (so $H^2(S,\Z)=0$). Let $\cE_S$ be the vector bundle on~$S$ induced by $\cE$ (endowed with a structure of holomorphic vector bundle by the Oka principle \cite[Satz II]{GrauertOka}). The vector bundle $\cEnd(\cE_S)$ is topologically trivial, hence holomorphically trivial by the Oka principle \cite[Satz I]{GrauertOka}. Using \eqref{monoidaleq}, one can find an algebra structure $\cA$ on a trivial vector bundle on $\Spec(\cO(S))$ such that $\cA^{\an}\simeq\cEnd(\cE_S)$ as algebras. 

As~$\cA^{\an}$ is Azumaya, the criterion \cite[I, Th\'eor\`eme~5.1~(ii) et Remarque 5.12]{Grothbrauer} and \eqref{monoidaleq} show that $\cA$ is Azumaya. Its class $[\cA]\in\Br(\cO(S))$ vanishes in $\Br_{\an}(S)$ because $\cA^{\an}\simeq\cEnd(\cE_S)$. 
Suppose that $[\cA]\in\Br(\cO(S))$ vanishes. The exact sequence 
$$H^1_{\et}(\Spec(\cO(S)),\GL_2)\to H^1_{\et}(\Spec(\cO(S)),\PGL_2)\to H^2_{\et}(\Spec(\cO(S)),\G_m)$$
of \cite[Chap.\,IV, Remarque 4.2.10]{Giraud} yields a vector bundle~$\cF$ on~$\Spec(\cO(S))$ with~$\cA\simeq\cEnd(\cF)$, so $\cEnd(\cE_S)\simeq\cEnd(\cF^{\an})$ as algebras. The exact sequence 
$$H^1(S,\cO_S^{\times})\to H^1(S,\GL_2(\cO_S))\to H^1(S,\PGL_2(\cO_S))$$
provides us with a holomorphic line bundle $\cL$ on $S$ with $\cE_S\simeq \cF^{\an}\otimes\cL$ (use \cite[Chap.\,III, Proposition 3.4.5 (iv)]{Giraud}). As $H^2(S,\Z)=0$, the Oka principle~\cite[Satz~I]{GrauertOka} shows that $\cL$ is trivial, so $\cE_S\simeq \cF^{\an}$. As $\cF$ is generated by finitely many sections, so is $\cF^{\an}$, and hence so is $\cE_S$. This gives rise to a short exact sequence of vector bundles on $S$ of the form $0\to \cG\to\cO_S^{\oplus N}\to \cE_S\to 0$. Taking Chern classes shows that~$(1+c_2(\cE_S))^{-1}=c(\cG)$. This contradicts the fact that $c_2(\cE_S)$ is not nilpotent.
\end{proof}

\begin{lem}
\label{lemBGL2}
There exists a countable CW complex $T$ and a topological Azumaya algebra $\cA$ of degree $2$ on $T$ whose class in $H^3(T,\Z)_{\tors}$ (see \eqref{BrH3}) is not nilpotent in the cohomology ring of $T$.
\end{lem}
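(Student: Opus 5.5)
Take $T$ to be (a CW model of) $\BPGL_2(\C)$ and $\cA$ the universal degree $2$ Azumaya algebra on it, i.e.\ the one attached to the universal $\PGL_2(\C)$-torsor. This is the natural analogue of the choice $\BSL_2(\C)$ in Lemma \ref{lemBorel}. Since $\BPGL_2(\C)$ has the homotopy type of a CW complex with finitely many cells in each dimension, $T$ is countable. By \eqref{PGLH2H3}, the class of $\cA$ in $H^3(T,\Z)_{\tors}$ equals $\partial(w)$. Here $w\in H^2(\BPGL_2(\C),\Z/2)$ is the tautological class, i.e.\ the boundary for $0\to\Z/2\to\SL_2\to\PGL_2\to0$, and $\partial$ is the Bockstein for $0\to\Z\xrightarrow{2}\Z\to\Z/2\to0$. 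It therefore suffices to show that $\partial(w)$ is not nilpotent in $H^*(T,\Z)$.

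To prove this, reduce modulo $2$. The reduction of $\partial(w)$ is $\Sq^1(w)\in H^3(\BPGL_2(\C),\Z/2)$, and reduction mod $2$ is a ring homomorphism. So it is enough to show that $\Sq^1(w)$ is not nilpotent in $H^*(\BPGL_2(\C),\Z/2)$. Use $\PGL_2(\C)\simeq \SO_3(\C)$, which has maximal compact subgroup $\SO(3)$, so that $\BPGL_2(\C)\simeq\BSO(3)$. Under this identification $w$ is the Stiefel--Whitney class $w_2$, and $H^*(\BSO(3),\Z/2)=\Z/2[w_2,w_3]$ is a polynomial ring. By the Wu formula, $\Sq^1(w_2)=w_1w_2+w_3=w_3$. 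Since $w_3$ is a polynomial generator, it is not nilpotent, and hence neither is $\partial(w)$.

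The main point requiring care is the identification of the tautological class $w$, i.e.\ the obstruction to lifting to $\SL_2\simeq\mathrm{Spin}(3)$, with $w_2$ of the associated rank $3$ real bundle. This is the standard statement that $w_2$ is the obstruction to a spin structure, applied to the covering $\mathrm{SU}(2)\to\SO(3)$ with kernel $\Z/2$. One also needs the compatibility of the boundary maps with the reduction of structure group from $\PGL_2(\C)$ to $\SO(3)$, but this is formal.
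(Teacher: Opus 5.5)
Your proposal is correct and follows the paper's proof essentially verbatim. It takes $T\simeq\BPGL_2(\C)\simeq\BSO_3(\R)$ with the tautological algebra, uses $H^*(T,\Z/2)=\Z/2[w_2,w_3]$, and shows that the mod~$2$ reduction of the class in $H^3$ is $\Sq^1w_2=w_3$, which is not nilpotent; the only difference is that the paper identifies the tautological class with $w_2$ more cheaply than via the spin obstruction, by noting that it is nonzero and that $H^2(T,\Z/2)$ is generated by $w_2$.
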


\begin{proof}
Choose $T$ of the homotopy type of $\BPGL_2(\C)$ and let $\cA$ be the tautological topological Azumaya algebra of degree $2$ on $T$. As $\PGL_2(\C)$ has the homotopy type of its maximal compact subgroup $\SO_3(\R)$, one computes that 
\begin{equation}
\label{cohoBPGL2}
H^*(T,\Z/2)=H^*(\BPGL_2(\C),\Z/2)=H^*(\BSO_3(\R),\Z/2)=\Z/2[w_2,w_3],
\end{equation}
where the $w_i$ are the Stiefel--Whitney classes. The class of the Azumaya algebra~$\cA$ in $H^2(T,\Z/2)$ (see~\eqref{PGLH2H3}) is nonzero, hence equal to $w_2$. The reduction modulo~$2$ of its class in $H^3(T,\Z)_{\tors}$ is therefore equal to $\Sq^1w_2=w_3$ (by the Wu formula \cite[Problem 8-A]{MS}) which is not nilpotent by \eqref{cohoBPGL2}.
\end{proof}

\begin{prop}
\label{propnotsurj}
There exists a Stein space $S$ such that the natural morphism $\Br(\cO(S))\to\Br_{\an}(S)$ is not surjective.
\end{prop}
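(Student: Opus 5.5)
We mirror the construction of Proposition \ref{propnotinj}, now using Lemma \ref{lemBGL2}. Let $T$ and $\cA$ be as in Lemma \ref{lemBGL2}, and let $\alpha\in H^3(T,\Z)_{\tors}$ be the class of $\cA$; it is $2$-torsion and not nilpotent. By Theorem \ref{lemStein}, choose for each $i\geq 3$ a Stein manifold $S_i$ of dimension $i$ with the homotopy type of the $i$-skeleton $T_{\leq i}$ of $T$, and let $S$ be the disjoint union of the $S_i$. This is a Stein space, necessarily infinite-dimensional. Pull $\cA$ back to each $S_i$, giving a topological Azumaya algebra of degree $2$ on $S$. By Proposition \ref{propcompa1}, or directly by Grauert's Oka principle for $\PGL_2$-torsors, it is isomorphic to the topological algebra underlying a holomorphic Azumaya algebra $\cA_S$ of degree $2$ on $S$. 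Its class lies in $\Br_{\an}(S)$.

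Suppose $[\cA_S]$ lies in the image of $\Br(\cO(S))$. Then there is an Azumaya algebra $\cB$ on $\Spec(\cO(S))$, say of degree $m$, with $\cB^{\an}$ Morita-equivalent to $\cA_S$. By \eqref{monoidaleq}, $\cB$ is globally finitely presented, so $\cB^{\an}$ is generated by finitely many global sections. Hence $\cB^{\an}$ is a quotient of a trivial bundle $\cO_S^{\oplus N}$, with a fixed $N$ valid on all of $S$. Morita equivalence gives $\cA_S\otimes\cEnd(\cE)\simeq\cB^{\an}\otimes\cEnd(\cF)$ for vector bundles $\cE,\cF$ on $S$. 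Rather than track these bundles, we use a uniform statement: $[\cA_S]$ is represented by a single degree-$m$ Azumaya algebra on all of $S$. So on each $S_i$ the class $\alpha|_{S_i}$ is the class of a $\PGL_m$-torsor, with $m$ independent of $i$.

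The main obstacle is turning "representable by a single fixed degree $m$ on every skeleton" into a contradiction with non-nilpotence of $\alpha$. The plan is as follows. A degree-$m$ Azumaya algebra on $S_i$ gives a classifying map $S_i\to\BPGL_m(\C)$ whose pullback of the universal class in $H^3(\BPGL_m(\C),\Z)$ is $\alpha|_{S_i}$. The key fact is that the universal class $\xi_m\in H^3(\BPGL_m(\C),\Z)$ is nilpotent: rationally $H^*(\BPGL_m(\C),\Z)$ has no odd classes, and in positive degrees the torsion is annihilated by a power of $m$. Since $\xi_m$ is torsion, $\xi_m^k$ is torsion; I expect, via the Antieau--Williams computations of the low cohomology of $\BPGL_m$ or a transfer argument to the normalizer of a maximal torus, that $\xi_m^k=0$ for some $k$ depending only on $m$. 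Failing that, the fallback is to compare with $\BPGL_m$ through the finite $\GL_m$-type bound used in Proposition \ref{propnotinj}: the finitely many generators give a map $S_i\to\Gr$ into a fixed Grassmannian-type classifying space, whose cohomology vanishes above a fixed degree in the relevant odd part.

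Granting $\xi_m^k=0$, we get $\alpha^k|_{S_i}=0$ for all $i$. Since $S_i\simeq T_{\leq i}$ and $H^{3k}(T,\Z)\to H^{3k}(T_{\leq i},\Z)$ is injective for $i>3k+1$, this gives $\alpha^k=0$ in $H^*(T,\Z)$. That contradicts Lemma \ref{lemBGL2}, so $[\cA_S]$ is not in the image of $\Br(\cO(S))\to\Br_{\an}(S)$.
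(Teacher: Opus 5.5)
Your argument has a genuine gap at the ``key fact'': the universal class $\xi_m\in H^3(\BPGL_m(\C),\Z)$ is \emph{not} nilpotent in general. For $m=2$ one has $\BPGL_2(\C)\simeq\BSO_3(\R)$. The mod $2$ reduction of $\xi_2$ is $\Sq^1w_2=w_3$, and $w_3$ is not nilpotent in $\Z/2[w_2,w_3]$. This is exactly Lemma~\ref{lemBGL2}, whose space $T$ is $\BPGL_2(\C)$ itself, so your claim contradicts the lemma you start from. Pulling back along the map $\BPGL_2(\C)\to\BPGL_m(\C)$ induced by $g\mapsto g\otimes\Id_{m/2}$ (which corresponds to tensoring with a matrix algebra and preserves Brauer classes) shows that $\xi_m$ is non-nilpotent for every even $m$. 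That is the relevant case here, since $[\cA_S]$ has period $2$.

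The underlying problem is that your reduction to ``$[\cA_S]$ is represented by one Azumaya algebra of fixed degree on all of $S$'' is vacuous: $\cA_S$ is already such an algebra, of degree $2$. So no argument that only remembers the degree of $\cB$ can succeed. Your fallback does not fix this either. A map to a Grassmannian built from $N$ global generators classifies only the underlying vector bundle of $\cB^{\an}$, which does not determine the Brauer class, and Grassmannians have no odd cohomology at all.

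What is missing is to keep the algebraic finiteness of $\cB$ itself, not merely its degree. The structure of $\cB$ (generators, relations, multiplication table, and invertibility of $\cB\otimes\cB^{\op}\to\cEnd(\cB)$) involves only finitely many elements of $\cO(S)$. A limit argument therefore yields a finitely generated $\C$-subalgebra $R\subset\cO(S)$ and an Azumaya algebra $\cC$ on $V:=\Spec(R)$ with $\cB\simeq\pi^*\cC$. The universal property of analytification then gives a holomorphic map $f:S\to V^{\an}$ with $f^*\cC^{\an}\simeq\cB^{\an}$. Hence the class of $[\cA_S]=[\cB^{\an}]$ in $H^3(S,\Z)$ is $f^*\gamma$, where $\gamma\in H^3(V^{\an},\Z)_{\tors}$ is the class of $\cC^{\an}$.

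Since $V^{\an}$ has the homotopy type of a finite CW complex, its cohomology vanishes in large degrees. So $\gamma$, and hence $f^*\gamma$, is nilpotent with an exponent independent of the skeleton index $i$. Your final paragraph (injectivity of restriction to high skeleta) then gives the contradiction correctly. This is the paper's proof: the class must factor through a finite-dimensional algebraic variety, not through $\BPGL_m(\C)$. Your observation that $\cB^{\an}$ is globally generated by finitely many sections was pointing this way, but you also need to encode the multiplication to land in a finite-type parameter space.
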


\begin{proof}
Let $T$ and $\cA$ be as in Lemma \ref{lemBGL2}. Fix a Stein space~$S_i$ with the homotopy type of the $i$th skeleton of $T$ (see  Theorem \ref{lemStein}). Let $S$ be the disjoint union of the~$(S_i)_{i\geq 0}$. Let $\cA_S$ be the topological Azumaya algebra on $S$ induced by~$\cA$. Use the Oka principle \cite[Satz II]{GrauertOka} to view $\cA_S$ as a holomorphic Azumaya algebra.

Assume for contradiction that there exists an Azumaya $\cB$ on $\Spec(\cO(S))$ such that $[\cA_S]=[\cB^{\an}]$ in $\Br_{\an}(S)=\Br_{\topo}(S)$ (see Proposition \ref{propcompa1}). Using the characterization \cite[I, Th\'eor\`eme 5.1(ii)]{Grothbrauer} of Azumaya algebras, a limit argument based on \cite[Lemmas~\href{https://stacks.math.columbia.edu/tag/01ZR}{01ZR} and~\href{https://stacks.math.columbia.edu/tag/0B8W}{0B8W}\,(1)]{SP} shows the existence of a finitely generated sub-$\C$-algebra $R\subset\cO(S)$ with induced morphism $\pi:\Spec(\cO(S))\to V:=\Spec(R)$ and of an Azumaya algebra $\cC$ on $V$ such that $\cB\simeq\pi^*\cC$.

Let $V^{\an}$ and $\cC^{\an}$ be the analytifications of $V$ and $\cC$ in the sense of~\cite{GAGA} or \cite[Exp.\,XII]{SGA1}. Let $f:S\to V^{\an}$ be the holomorphic map stemming from the universal property of~$V^{\an}$ (see \cite[Exp.\,XII, Th\'eor\`eme 1.1]{SGA1}). Then ${f^*\cC^{\an}\simeq\cB^{\an}}$.
Let $\gamma\in H^3(V^{\an},\Z)_{\tors}$ be the class associated to $\cC^{\an}$ (see \eqref{BrH3}). This class is nilpotent because $V^{\an}$ has the homotopy type of a finite CW complex (see \cite[Theorem  p.\,170, Remark 1.10]{Hironaka}). It follows that so is the class $f^*\gamma\in H^3(S,\Z)_{\tors}$ associated to $f^*\cC^{\an}\simeq\cB^{\an}$. As~${[\cA_S]=[\cB^{\an}]}$, this contradicts our choice of $\cA_S$.
\end{proof}

\section{Comparison theorems in degree \texorpdfstring{$2$}{2}}

\subsection{Finite coefficients}

We start with a well-known lemma.

\begin{lem}
\label{lemlinebundles}
Let $S$ be a finite-dimensional Stein space. The natural morphisms
\begin{equation}
\label{isolinebundles}
H^1_{\et}(\Spec(\cO(S)),\G_m)\to H^1(S,\cO_S^{\times})\to H^1(S,\cC_S^{\times})
\end{equation}
are isomorphisms.
\end{lem}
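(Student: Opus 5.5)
We treat the two arrows separately, beginning with the right-hand one, $H^1(S,\cO_S^{\times})\to H^1(S,\cC_S^{\times})$. Compare the exponential exact sequences $0\to\Z\xrightarrow{2\pi i}\cO_S\xrightarrow{\exp}\cO_S^{\times}\to 0$ and $0\to\Z\xrightarrow{2\pi i}\cC_S\xrightarrow{\exp}\cC_S^{\times}\to 0$ through the long exact sequences they induce. Since $S$ is Stein, Cartan's Theorem B gives $H^k(S,\cO_S)=0$ for $k>0$. Since $S$ is paracompact, $\cC_S$ is fine, so $H^k(S,\cC_S)=0$ for $k>0$. Both boundary maps to $H^2(S,\Z)$ are therefore isomorphisms, and they are compatible with the inclusion $\cO_S\subset\cC_S$. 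By the five lemma, $H^1(S,\cO_S^{\times})\to H^1(S,\cC_S^{\times})$ is an isomorphism. Alternatively, this is the rank-one case of Grauert's Oka principle, \cite[Satz 2 p.\,267]{GrauertOka}, already used in Proposition~\ref{propcompa1}.

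For the left-hand arrow, interpret both groups as isomorphism classes of line bundles. $H^1_{\et}(\Spec(\cO(S)),\G_m)=\Pic(\cO(S))$ classifies invertible sheaves on $\Spec(\cO(S))$, by Hilbert 90, since étale and Zariski $\G_m$-torsors agree. $H^1(S,\cO_S^{\times})$ classifies holomorphic line bundles on $S$. As recalled in the proof of Proposition~\ref{propcompa2}, the monoidal equivalence \eqref{monoidaleq} restricts to an equivalence between locally free sheaves of rank $1$ on $\Spec(\cO(S))$ and holomorphic line bundles on $S$, by \cite[Remark 2.6]{Steinsurface}. It therefore induces a bijection on isomorphism classes, and this bijection is a group isomorphism because the equivalence is monoidal. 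Finally, check that this bijection is the comparison map induced by $\varepsilon$. Both are given by $\cL\mapsto\cL^{\an}=i^*\cL$: an étale trivialization of $\cL$ analytifies to a local trivialization of $\cL^{\an}$, with the transition cocycle pulled back along $\varepsilon$.

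The main subtlety is the input from \cite[Remark 2.6]{Steinsurface}: we need every holomorphic line bundle on $S$ to be globally finitely presented, which is where finite-dimensionality enters. Concretely, a holomorphic vector bundle on a Stein space of finite dimension $d$ is generated by finitely many global sections (roughly $d+1$ times its rank suffices), and its kernel is again a bundle with the same property. Since this result is quoted from earlier in the paper, we only invoke it, and the lemma follows.
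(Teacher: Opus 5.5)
Your proposal is correct and follows essentially the same route as the paper. The first arrow is handled by the rank-one case of the equivalence \eqref{monoidaleq} from \cite[Proposition 2.5 and Remark 2.6]{Steinsurface}, and the second by comparing the two exponential sequences using $H^1(S,\cO_S)=H^2(S,\cO_S)=0$ and the fineness of $\cC_S$; your remarks on Hilbert 90 and on matching the bijection with the comparison map only make explicit what the paper leaves implicit.
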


\begin{proof}

As $S$ is finite-dimensional, the analytification functor (see \S\ref{paranalytification}) induces a bijection between the sets of isomorphism classes of invertible sheaves on $\Spec(\cO(S))$ and~$S$ respectively (see~\cite[Proposition 2.5 and Remark 2.6]{Steinsurface}). This shows that the first arrow of \eqref{isolinebundles} is an isomorphism.

Since $H^1(S,\cO_S)=H^2(S,\cO_S)=0$ (as~$S$ is Stein) and $H^1(S,\cC_S)=H^2(S,\cC_S)=0$ (as $\cC_S$ is a fine sheaf), the exponential exact sequences $0\to\Z\xrightarrow{2\pi i}\cO_S\xrightarrow{\exp}\cO_S^{\times}\to 0$ (see \cite[Lemma p.\,142]{GRStein}) and $0\to\Z\xrightarrow{2\pi i}\cC_S\xrightarrow{\exp}\cC_S^{\times}\to 0$ show that both groups~$H^1(S,\cO_S^{\times})$ and $H^1(S,\cC_S^{\times})$ are naturally isomorphic to $H^2(S,\Z)$. We deduce that the second arrow of \eqref{isolinebundles} is an isomorphism.
\end{proof}

\begin{thm}
\label{th6+}
Let $S$ be a finite-dimensional Stein space. The natural morphisms
\begin{equation*}
H^2_{\et}(\Spec(\cO(S)),\Z/n)\to H^2(S,\Z/n)
\end{equation*}
are isomorphisms for all $n\geq 1$.
\end{thm}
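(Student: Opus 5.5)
We will compare the Kummer sequences on both sides. On $\Spec(\cO(S))_{\et}$, the sequence $0\to\mu_n\to\G_m\xrightarrow{n}\G_m\to 0$ is exact, and $\mu_n\simeq\Z/n$ because $\cO(S)$ is a $\C$-algebra. On $S$, the sequence $0\to\Z/n\to\cO_S^{\times}\xrightarrow{n}\cO_S^{\times}\to 0$ is exact, as is its continuous analogue with $\cC_S^{\times}$. These sequences give a commutative diagram of short exact sequences
\[
0\to \Pic(\cO(S))/n\to H^2_{\et}(\Spec(\cO(S)),\Z/n)\to H^2_{\et}(\Spec(\cO(S)),\G_m)[n]\to 0
\]
mapping to
\[
0\to H^1(S,\cC_S^{\times})/n\to H^2(S,\Z/n)\to H^2(S,\cC_S^{\times})[n]\to 0.
\]
To make the comparison we pass through $\cO_S^{\times}$, or directly to $\cC_S^{\times}$ via $\varepsilon$. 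For the bottom row we use the continuous Kummer sequence; its exactness in degree $2$ for singular cohomology is justified because $S$ is locally contractible, so singular cohomology agrees with sheaf cohomology. The main task is to check that the vertical maps are compatible, which follows from the functoriality of $\varepsilon_X^*$ together with the morphism of exact sequences of sheaves $\varepsilon^*\G_m\to\cO_S^{\times}\to\cC_S^{\times}$.

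By the five lemma, it then suffices to prove two things. First, the left vertical map must be an isomorphism; this follows from Lemma \ref{lemlinebundles}, since both $H^1$'s are identified compatibly. Second, the right vertical map
\[
H^2_{\et}(\Spec(\cO(S)),\G_m)[n]\to H^2(S,\cC_S^{\times})[n]
\]
must be an isomorphism. For this we use Gabber's theorem: the injection \eqref{BrH2} identifies $\Br(\cO(S))$ with the torsion of $H^2_{\et}(\Spec(\cO(S)),\G_m)$, so the source equals $\Br(\cO(S))[n]$. By Theorem \ref{th2+}, applied to a finite-dimensional CW complex homotopy equivalent to $S$ (which exists by Hamm), the target equals $\Br_{\topo}(S)[n]$. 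More precisely, we invoke the isomorphism $\Br_{\topo}(S)\to H^2(S,\cC_S^{\times})_{\tors}$ directly for $S$. Paracompactness of $S$ makes $\cC_S$ fine, so the proof of Theorem \ref{th2+} transfers after identifying $H^2(S,\cC_S^\times)\simeq H^3(S,\Z)$ with the cohomology of the homotopy-equivalent CW complex.

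Theorem \ref{cor1+} then shows that $\Br(\cO(S))\to\Br_{\topo}(S)$ is an isomorphism, so the right vertical map is an isomorphism on $n$-torsion. The main obstacle is checking that this map agrees with the composition through Brauer groups. Concretely, we must verify that Grothendieck's map $\Br\to H^2(\G_m)$, followed by the comparison map, coincides with analytifying an Azumaya algebra and then taking the boundary of $\PGL_n(\cC_S)$. This compatibility comes from the naturality of the boundary maps for $1\to\G_m\to\GL_r\to\PGL_r\to 1$ under the morphism of sites $\varepsilon$ and the change of structure sheaf $\cO_S\to\cC_S$.
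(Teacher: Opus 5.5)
Your proposal is correct and follows essentially the same route as the paper: compare the Kummer sequence for $\G_m$ on $\Spec(\cO(S))$ with the one for $\cC_S^{\times}$ on $S$, get the left vertical isomorphism from Lemma~\ref{lemlinebundles}, identify the right-hand terms with $\Br(\cO(S))[n]$ and $\Br_{\topo}(S)[n]$ via Gabber's theorem and Theorem~\ref{th2+}, and conclude with Theorem~\ref{cor1+} and the five lemma. The compatibility of the Brauer-group identifications, which you flag as the main point to check, is exactly what the paper asserts without further detail.
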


\begin{proof}
Consider the Kummer short exact sequence 
\begin{equation}
\label{ses1}
0\to\Z/n\xrightarrow{1\mapsto \exp(\frac{2\pi i}{n})}\G_m\xrightarrow{n}\G_m\to 0
\end{equation}
 of \'etale sheaves on ${X:=\Spec(\cO(S))}$ and the analogous short exact sequence 
 \begin{equation}
 \label{ses2}
 0\to\Z/n\xrightarrow{1\mapsto \exp(\frac{2\pi i}{n})}\cC_S^{\times}\xrightarrow{n}\cC_S^{\times}\to 0
 \end{equation}
on $S$. The cohomology long exact sequences of \eqref{ses1} and \eqref{ses2} induce a commutative diagram with exact rows
\begin{equation}
\label{doubleKummer}
\begin{aligned}
\xymatrix@C=1.2em@R=1em{
0\ar[r]&H^1_{\et}(X,\G_m)/n\ar^{}[r]\ar[d]&H^2_{\et}(X,\Z/n)\ar^{}[r]\ar[d]&H^2_{\et}(X,\G_m)[n]\ar[d]\ar[r]&0\\
0\ar[r]&H^1(S,\cC_S^{\times})/n\ar[r]&H^2(S,\Z/n)\ar^{}[r]&H^2(S,\cC_S^{\times})[n]\ar[r]&0.
}
\end{aligned}
\end{equation}

In view of Lemma \ref{lemlinebundles}, the left vertical arrow of \eqref{doubleKummer} is an isomorphism.

The morphism \eqref{injBrH2} identifies the top right group of \eqref{doubleKummer} with $\Br(\cO(S))[n]$ by Gabber's theorem \cite[I, Corollaire~1.7]{Grothbrauer}, and the bottom right group of \eqref{doubleKummer} with $\Br_{\topo}(S)[n]$ thanks to Theorem \ref{th2+}. As these identifications are compatible, the right vertical arrow of \eqref{doubleKummer} is an isomorphism.

The theorem now follows from the five lemma applied to \eqref{doubleKummer}.
\end{proof}

\subsection{Central separable algebras}
\label{parbigger}

In this paragraph, we gather generalities on central separable algebras that are used in the proof of Theorem \ref{th7+}.

Let $X$ be a scheme. Let $\cM$ and $\cN$ be quasi-coherent $\cO_X$-modules and fix a surjective morphism~${\lambda:\cM\otimes_{\cO_X}\cN\to\cO_X}$ of~$\cO_X$-modules. The \textit{elementary algebra}~$\cM\otimes_{\cO_X}^{\lambda}\cN$ is the (possibly non-unital) quasi-coherent associative algebra whose underlying sheaf is~$\cM\otimes_{\cO_X}\cN$, endowed with the product
\begin{equation}
\label{elementary}
(m\otimes n)\star(m'\otimes n')=\lambda(m'\otimes n)\cdot (m\otimes n').
\end{equation}

Following Heinloth and Schr\"oer \cite[Definition 2.1]{HeinS}, we say that a (possibly non-unital) quasi-coherent associative $\cO_X$-algebra $\cA$ is \textit{central separable} if it is \'etale-locally isomorphic to an elementary algebra (in \loccit, this condition is only required smooth-locally, but see \cite[Lemma~\href{https://stacks.math.columbia.edu/tag/055U}{055U}]{SP}). Let $p:U\to X$ be an \'etale morphism. A \textit{splitting} of~$\cA$ over~$U$ is the data of quasi-coherent $\cO_U$-modules $\cM$ and~$\cN$, of a surjective morphism~${\lambda:\cM\otimes_{\cO_U}\cN\to\cO_U}$, and of an algebra isomorphism~$\varphi:\cM\otimes_{\cO_U}^{\lambda}\cN\isoto p^*\cA$. 

As is explained in \cite[p.\,1191]{HeinS}, associating to $p:U\to X$ the category of splittings $(\cM,\cN,\lambda,\varphi)$ of $p^*\cA$ over $U$ (and isomorphisms of splittings) gives rise to a stack $\kX_{\cA}$ on the small \'etale site of $X$. By results of Raeburn and Taylor \cite[Lemmas~2.3 and 2.4]{RT} (and since splittings exist \'etale-locally by our assumption), the stack $\kX_{\cA}$ is a $\G_m$-gerbe. We denote by $[\cA]\in H^2_{\et}(X,\G_m)$ the class of $\kX_{\cA}$.

The following theorem originates from \cite[Corollary 4.1]{RT} (with a mistake pointed out and corrected in \cite[Theorem 3.4]{CG}). In the form below, it follows from the work of Heinloth and Schr\" oer \cite{HeinS}.

\begin{thm}
\label{thBigBrauer}
Let $X$ be a quasi-compact and quasi-separated scheme. Fix a cohomology class~${\alpha\in H^2_{\et}(X,\G_m)}$. There exists a central separable algebra $\cA$ of finite presentation over $X$ with~$[\cA]=\alpha$, a surjective \'etale morphism $p:U\to X$, and a splitting~$(\cM,\cN,\lambda,\varphi)$ of $p^*\cA$ over $U$ with $\cM$ and $\cN$ of finite presentation.
\end{thm}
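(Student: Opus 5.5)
The plan is to represent $\alpha$ by a $\G_m$-gerbe, split it on an étale cover, and then glue the local trivializations into an elementary algebra using the descent formalism of Heinloth and Schr\"oer \cite{HeinS}. The last step is a noetherian approximation argument giving finite presentation.

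First, represent $\alpha$ by a $\G_m$-gerbe $\kX$ on $X_{\et}$. Since $X$ is quasi-compact, there is a surjective étale morphism $p:U\to X$ with $U$ affine (a finite disjoint union of affine opens of étale $X$-schemes) such that $\kX$ admits an object over $U$. Consider the category of $\kX$-twisted quasi-coherent sheaves of weight $1$ (\resp weight $-1$), in the sense of de Jong and Lieblich. The object over $U$ trivializes $\kX|_U$, so twisted sheaves on $U$ become ordinary quasi-coherent sheaves.

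The key construction is to produce a weight $1$ twisted quasi-coherent sheaf $\cM$ on $X$ that is \emph{locally generating}. Concretely, I would take $\cM:=p_*\cO_U$ computed in the twisted sense: push forward the trivial twisted line bundle on $U$ (given by the section of the gerbe) along the affine, quasi-compact, quasi-separated morphism $p$. Then $p^*\cM$ contains $\cO_U$ as a direct summand, via the diagonal of $U\times_XU$. Do the same with weight $-1$ to get $\cN$. The pairing $\cM\otimes\cN\to\cO_X$ is obtained from twisted duality, which is untwisted in total weight $0$; its surjectivity can be checked étale-locally over $U$, where it contains the identity pairing $\cO_U\otimes\cO_U\to\cO_U$. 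Set $\cA:=\cM\otimes^{\lambda}\cN$. This is an honest quasi-coherent $\cO_X$-algebra because the weights cancel. Over $U$, the twisted sheaves become ordinary sheaves $\cM_U,\cN_U$, and $\varphi$ is the tautological identification, so $(\cM_U,\cN_U,\lambda,\varphi)$ is a splitting. The gerbe of splittings $\kX_{\cA}$ maps to $\kX$, sending an object to the twisting data. Any morphism of $\G_m$-gerbes is an isomorphism, so $[\cA]=\alpha$. This is essentially the argument of \cite[\S2--3]{HeinS}, and I would cite it there rather than redo the twisted bookkeeping.

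The main obstacle is \emph{finite presentation}: $p_*\cO_U$ is only quasi-coherent in general. To handle it, write $\cM$ and $\cN$ as filtered colimits of finitely presented twisted sheaves. This is possible on a qcqs algebraic stack (the gerbe $\kX$) by Rydh's approximation, or by \cite[Lemma~\href{https://stacks.math.columbia.edu/tag/01PJ}{01PJ}]{SP}-type arguments applied on $U$ with descent data. The restriction $\lambda|_{\cM_i\otimes\cN_i}$ is already surjective for $i$ large, because $X$ is quasi-compact and surjectivity onto $\cO_X$ is detected by finitely many local sections hitting $1$. Replacing $(\cM,\cN)$ by $(\cM_i,\cN_i)$ gives a finitely presented elementary algebra whose gerbe of splittings still maps to $\kX$, hence still has class $\alpha$. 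The pullbacks of $\cM_i,\cN_i$ to $U$ are finitely presented, which provides the required splitting. Finally, $\cA=\cM_i\otimes\cN_i$ is then of finite presentation.
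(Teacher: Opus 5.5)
Your overall strategy (twisted sheaves of weights $\pm1$, an elementary algebra, then approximation) is a legitimate alternative to the paper's route, but the step that produces the surjective pairing $\lambda$ fails as written. It fails because you pair before you approximate.

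With $\cM=p_*\cL$ and $\cN=p_*\cL^{-1}$ there is no natural map $\cM\otimes\cN\to\cO_X$. The product lands in $p_*\cO_U$, and a map $p_*\cO_U\to\cO_X$ (a trace) is only available when $p$ is finite. That cannot be arranged in general: a finite locally free splitting cover of degree $d$ forces $d\alpha=0$.

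Reading ``twisted duality'' as $\cN=\mathcal{H}om(\cM,\cO_X)$ does not help either. For $\cM$ not of finite presentation this sheaf need not be quasi-coherent, and its formation does not commute with $p^*$, so surjectivity cannot be checked on $U$.

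In fact no quasi-coherent $\cN$ works. Take $\alpha=0$, $X=\mathbb{A}^1_k$, $U=(\mathbb{A}^1_k\setminus\{0\})\sqcup(\mathbb{A}^1_k\setminus\{1\})$ and $\cL=\cO_U$.
\begin{itemize}
\item Then $\cM$ corresponds to the $k[t]$-module $k[t,t^{-1}]\oplus k[t,(t-1)^{-1}]$, whose two summands are $t$-divisible and $(t-1)$-divisible respectively.
\item Hence every $k[t]$-linear map $\cM(X)\otimes_{k[t]}N\to k[t]$ vanishes, since $k[t]$ has no nonzero infinitely divisible elements.
\item In particular $\mathcal{H}om(\cM,\cO_X)$ has no nonzero global sections, although it is nonzero on $D(t)$.
\end{itemize}
So $\lambda$ does not exist, even though $p^*\cM$ does contain $\cO_U$ as a direct summand. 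Your final approximation step then has nothing to restrict.

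You can repair the argument by reversing the order.
\begin{enumerate}
\item Use Rydh's approximation together with the weight decomposition to find a finitely presented weight-$1$ twisted sheaf $\cM_0\to p_*\cL$ with $p^*\cM_0\to\cL$ surjective. This is possible because $\cL$ is of finite type and $U$ is quasi-compact.
\item Set $\cN_0:=\mathcal{H}om(\cM_0,\cO)$. Since $\cM_0$ is of finite presentation, $\cN_0$ is quasi-coherent of weight $-1$ and its formation commutes with flat pullback. The evaluation pairing is therefore surjective, as can now be checked on $U$.
\item Replace $\cN_0$ by a finitely presented $\cN_1\to\cN_0$ that keeps the pairing surjective. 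This is needed because on a non-coherent scheme $\cN_0$ need not be of finite presentation.
\end{enumerate}

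You also cannot simply cite Heinloth--Schr\"oer for the construction over $X$: their results are stated for noetherian schemes. This is exactly why the paper proceeds differently.
\begin{itemize}
\item It writes $X$ as a limit of schemes $X_i$ of finite type over $\Z$ (Stacks Project, Tag 01ZA).
\item It descends $\alpha$ using $H^2_{\et}(X,\G_m)=\colim_i H^2_{\et}(X_i,\G_m)$ (Tag 09YQ).
\item It applies Heinloth--Schr\"oer's Theorems 3.1 and 2.5 on a noetherian $X_i$, where coherent means finitely presented and duals of coherent sheaves behave well.
\item It pulls the result back to $X$.
\end{itemize}
Your repaired route avoids noetherian approximation of the scheme, at the cost of approximating sheaves on the gerbe. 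The paper's route is shorter but rests entirely on citations.
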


\begin{proof}
By a limit argument (based on \cite[Proposition~\href{https://stacks.math.columbia.edu/tag/01ZA}{01ZA}]{SP} and \cite[Theorem~\href{https://stacks.math.columbia.edu/tag/09YQ}{09YQ}]{SP} applied with $\cF_i=\G_m$), we may assume that $X$ is noetherian. It then follows from \cite[Theorem 3.1 and its proof]{HeinS} that
%Their statement only claims that classes of central separable algebras span H^2_{\et}(X,\G_m), but the proof shows that any class is represented by a central separable algebra.
there exists a central separable algebra $\cA$ on $X$ with $[\cA]=\alpha$. By \cite[Theorem 2.5]{HeinS}, the~$\cO_X$\nobreakdash-algebra~$\cA$ may be chosen coherent. In addition, in view of the proof of \cite[Theorem~2.5~(iii)$\Rightarrow$(ii)]{HeinS}, we may assume that, for some surjective \'etale morphism~${p:U\to X}$, there exists a splitting $(\cM,\cN,\lambda,\varphi)$ of $p^*\cA$ over~$U$ with $\cM$ and $\cN$ coherent.
\end{proof}

We will use analogues of the above constructions in complex-analytic geometry. Let $S$ be a complex space. Given coherent sheaves $\cM$ and $\cN$ on $S$ and a surjective morphism~${\lambda:\cM\otimes_{\cO_S}\cN\to \cO_S}$, the \textit{elementary algebra} associated to~$(\cM,\cN,\lambda)$ is the coherent $\cO_S$-algebra whose underlying sheaf is $\cM\otimes_{\cO_S}\cN$, endowed with the product \eqref{elementary}. A coherent $\cO_S$-algebra $\cA$ is \textit{central separable} if it is locally isomorphic to an elementary algebra.
A \textit{splitting} of $\cA$ over an open subset~$V\subset S$ is the data of coherent sheaves  $\cM$ and $\cN$ on $V$, of a surjective morphism~${\lambda:\cM\otimes_{\cO_V}\cN\to\cO_V}$ and of an algebra isomorphism $\varphi:\cM\otimes_{\cO_V}^{\lambda}\cN\isoto \cA|_V$.

\begin{lem}
\label{lemgerbe}
Let $S$ be a complex space. Let $\cA$ be a central separable $\cO_S$-algebra. The stack $\kX_{\cA}$ over $S$ associating to each open subset $V\subset S$ the category of splittings~$(\cM,\cN,\lambda,\varphi)$ of~$\cA|_V$ (and isomorphisms of splittings) is an~$\cO_S^{\times}$-gerbe.
\end{lem}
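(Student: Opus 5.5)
To show that $\kX_{\cA}$ is an $\cO_S^{\times}$-gerbe, I will check the three gerbe axioms, following the scheme-theoretic argument of Raeburn and Taylor \cite[Lemmas~2.3 and 2.4]{RT} in the analytic setting. The axioms are: (i) $\kX_{\cA}$ is a stack; (ii) it is locally nonempty, and any two objects are locally isomorphic; (iii) the automorphism sheaf of every object is canonically and compatibly identified with $\cO_S^{\times}$.

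\emph{Stack property and local nonemptiness.} Splittings are quadruples $(\cM,\cN,\lambda,\varphi)$ of coherent sheaves and morphisms of sheaves. Coherent sheaves and morphisms between them glue along open covers, and surjectivity of $\lambda$ is a local condition. So descent for objects and morphisms holds, and $\kX_{\cA}$ is a stack. Local nonemptiness is exactly the definition of central separability.

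\emph{Automorphisms.} An element $u\in\cO_S^{\times}(V)$ acts on a splitting by $(u,u^{-1})$ acting on $(\cM,\cN)$. This preserves $\lambda$ and $\varphi$, since $m\otimes n\mapsto um\otimes u^{-1}n$ is the identity on $\cM\otimes\cN$. I must show this gives an isomorphism $\cO_S^{\times}\isoto\underline{\mathrm{Aut}}$, and more generally that any two splittings are locally isomorphic with isomorphism sheaf an $\cO_S^{\times}$-torsor.

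The key is to recover $\cM$ and $\cN$ from the algebra $\cA$. Since $\lambda$ is surjective, locally there are sections $m_0,n_0$ with $\lambda(m_0\otimes n_0)=1$. Then $e:=\varphi(m_0\otimes n_0)$ is an idempotent of $\cA$. Using \eqref{elementary}, I will check that
\[\cM\isoto\cA\star e,\quad m\mapsto \varphi(m\otimes n_0),\qquad \cN\isoto e\star\cA,\]
with inverses given via $\lambda$. More intrinsically, $\cM\otimes_{\cO}\cN\cong\cA$ identifies $\cM$ with $\cA\otimes_{\cA}\cM$, so $\cM$ is a left $\cA$-module determined by $\cA$. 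Given two splittings $(\cM,\cN,\lambda,\varphi)$ and $(\cM',\cN',\lambda',\varphi')$, the sheaf $\cH om_{\cA}(\cM,\cM')$ should then be locally free of rank one over $\cO_S$, generated locally by $m\mapsto \varphi'^{-1}\varphi(m\otimes n_0)\star m'_0$, with the matching map on $\cN$ forced by compatibility with $\lambda,\lambda'$. This gives local existence of isomorphisms. It also shows that an automorphism is an $\cA$-linear automorphism of $\cM$, hence locally multiplication by a unit $u$, acting by $u^{-1}$ on $\cN$.

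The main obstacle is this last identification. It requires showing that $\cA$-linear endomorphisms of $\cM$ are $\cO_S$-scalars, i.e.\ that $\cE nd_{\cA}(\cM)=\cO_S$, when $\cM$ and $\cN$ are arbitrary coherent sheaves (not locally free) and $\cA$ may be non-unital. My plan is to compute locally, using $m_0,n_0$ with $\lambda(m_0\otimes n_0)=1$. If $f$ is $\cA$-linear, then $f(m)=f(\varphi(m\otimes n_0)m_0)=\varphi(m\otimes n_0)f(m_0)=\lambda(f(m_0)\otimes n_0)\,m$. So $f$ is multiplication by the holomorphic function $\lambda(f(m_0)\otimes n_0)$, which is independent of the choices, so these local scalars glue.

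The remaining verifications are purely algebraic identities. They are insensitive to whether the structure sheaf is analytic or étale, so the arguments of \cite{RT} transfer verbatim.
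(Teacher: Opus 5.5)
Your treatment of the stack property, local nonemptiness and the automorphism sheaf is sound. The identity $f(m)=\lambda(f(m_0)\otimes n_0)\,m$ needs one extra remark: compatibility with $\varphi$ forces the $\cM$-component of an automorphism to be $\cA$-linear. With that, it is a sections-level version of \cite[Lemma~2.4]{RT}, and it even avoids the stalk-plus-coherence globalization the paper uses for this part.

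\textbf{The gap.} It lies in the other gerbe axiom: that two splittings are locally isomorphic. The map you propose as a local generator of $\cH om_{\cA}(\cM,\cM')$, namely $m\mapsto\varphi'^{-1}\varphi(m\otimes n_0)\star m'_0$, need not be an isomorphism and can vanish identically. Take both splittings equal to the standard splitting of $M_2(\cO_S)$: $\cM$ and $\cN$ are column and row vectors, $\lambda(m\otimes n)=nm$, $\varphi(m\otimes n)=mn$. Choose $m_0=e_1$, $n_0=e_1^T$, $m'_0=e_2$, $n'_0=e_2^T$. Your map is then $m\mapsto\lambda(m'_0\otimes n_0)\,m=0$.

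In general, write $e=\varphi(m_0\otimes n_0)$ and $e'=\varphi'(m'_0\otimes n'_0)$, so that $\cM\cong\cA e$ and $\cM'\cong\cA e'$. Your map is right multiplication by $ee'$, and nothing forces $ee'$ to generate $e\cA e'$. Since the local freeness of $\cH om_{\cA}(\cM,\cM')$ is only asserted (``should''), the content of \cite[Lemma~2.3]{RT} is never proved.

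\textbf{What is missing} is a local-ring argument producing a good generator. Here is one way.
\begin{enumerate}
\item Computing in the second splitting gives $\cA e'\cA=\cA$. Hence $e=e\cdot e\cdot e=\sum_i x_iy_i$ with $x_i\in e\cA e'$ and $y_i\in e'\cA e$.
\item Since $e\cA e=\cO_S e\cong\cO_S$, one has $x_iy_i=c_ie$ with $\sum_i c_i=1$. Near a given point some $c_i$ is a unit, which yields $x,y$ with $xy=e$.
\item Then $yx=ce'$ with $c^2=c$, so near the point $c$ is $0$ or $1$. The case $c=0$ would give $e=(xy)^2=x(yx)y=0$, so $yx=e'$.
\item Right multiplication by $x$ on $\cA e\cong\cM$ and left multiplication by $y$ on $e\cA\cong\cN$ form an isomorphism of splittings. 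Compatibility with $\varphi$ and $\lambda$ follows from $xy=e$ and $yx=e'$.
\end{enumerate}
The same local-ring trick is what justifies your choice of $m_0,n_0$ with $\lambda(m_0\otimes n_0)=1$.

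Alternatively, do as the paper does: apply \cite[Lemma~2.3]{RT} to the stalks over the local ring $\cO_{S,x}$. Then spread the stalk isomorphism, together with its compatibilities with $\lambda$ and $\varphi$, to a neighborhood by coherence. This is also why your closing claim that the arguments of \cite{RT} ``transfer verbatim'' is too quick. Those arguments are statements over local rings, and passing to open subsets of $S$ requires either coherence or an explicit local argument like the one above.
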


\begin{proof}
Splittings of $\cA$ exist locally by assumption. Two splittings of $\cA$ on~${V\subset S}$ are isomorphic at the level of stalks by \cite[Lemma 2.3]{RT}, hence are locally isomorphic by coherence (use \cite[Annex,~\S 2.4]{GRCoherent}). In addition, an automorphism of a splitting~$(\cM,\cN,\lambda,\varphi)$ of $\cA$ on $V\subset S$ is given by multiplication by a (unique) invertible function on $\cM$ and by its inverse on $\cN$ (this holds at the level of stalks by \cite[Lemma 2.4]{RT} and globalizes by coherence).
\end{proof}

If $\cA$ is a central separable algebra on a complex space $S$, we let $[\cA]\in H^2(S,\cO_S^{\times})$ denote the class of the $\cO_S^{\times}$-gerbe $\kX_{\cA}$ defined in Lemma \ref{lemgerbe}.

\subsection{Invertible coefficients}

\begin{thm}
\label{th7+}
Let $S$ be a finite-dimensional Stein space. The natural morphism
\begin{equation}
\label{compaGm+}
H^2_{\et}(\Spec(\cO(S)),\G_m)\to H^2(S,\cO_S^{\times})
\end{equation}
is injective. 
\end{thm}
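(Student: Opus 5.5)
Let $X:=\Spec(\cO(S))$ and fix $\alpha\in H^2_{\et}(X,\G_m)$ whose image in $H^2(S,\cO_S^{\times})$ vanishes. The plan is to realize $\alpha$ geometrically as the class of a central separable algebra and then transport a global analytic splitting back to $X$.

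First, since $X$ is affine, it is quasi-compact and quasi-separated, so Theorem~\ref{thBigBrauer} applies. It gives a central separable algebra $\cA$ of finite presentation on $X$ with $[\cA]=\alpha$. It also gives a surjective \'etale $p:U\to X$ and a splitting $(\cM,\cN,\lambda,\varphi)$ of $p^*\cA$ over $U$ with $\cM,\cN$ of finite presentation. We may shrink $U$ to a finite disjoint union of affine \'etale $X$-schemes of finite presentation.

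Next, analytify. Since analytification is exact and compatible with tensor products and pullbacks, $\cA^{\an}$ is a coherent $\cO_S$-algebra. The data $(\cM^{\an},\cN^{\an},\lambda^{\an},\varphi^{\an})$ is a splitting of $\cA^{\an}$ pulled back to $U^{\an}$. Here $U^{\an}\to S$ is a surjective local biholomorphism, surjective because points of $S$ correspond to closed points of $X$. So $\cA^{\an}$ is central separable on $S$ in the sense of \S\ref{parbigger}. The morphism of sites $\varepsilon_X$ sends the gerbe $\kX_{\cA}$ to $\kX_{\cA^{\an}}$, and I would check that this makes the comparison map \eqref{compaGm+} send $[\cA]$ to $[\cA^{\an}]$. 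This is a routine compatibility of gerbes with pullback along $\varepsilon_X$.

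By hypothesis $[\cA^{\an}]=0$, so the gerbe $\kX_{\cA^{\an}}$ of Lemma~\ref{lemgerbe} has a global section. This is a global splitting $(\cM',\cN',\lambda',\varphi')$ of $\cA^{\an}$ on all of $S$, with $\cM',\cN'$ coherent.

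The main obstacle is to descend this splitting to $X$ using the equivalence \eqref{monoidaleq}. That equivalence requires $\cM'$ and $\cN'$ to be globally finitely presented, which a general coherent sheaf on a noncompact Stein space need not be.
\begin{itemize}
\item To handle this, I would first try to modify the splitting. Using finite-dimensionality and the local structure coming from $U^{\an}$, I would show that $\cM'$ and $\cN'$ are locally isomorphic to $\cM^{\an}$ and $\cN^{\an}$ up to twisting. Since $U^{\an}$ has finitely many "pieces" with globally finitely presented data, $\cM'$ and $\cN'$ would then have bounded numbers of local generators.
\item A finite-dimensional Stein space admits a finite cover by open sets on which coherent sheaves with uniformly bounded local generation are generated by finitely many global sections (Forster's results, as in \cite[Proposition 2.5, Remark~2.6]{Steinsurface}). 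This would give global finite presentation.
\item An alternative route is to replace $\cM'$ by $\cM'\otimes\cL$ and $\cN'$ by $\cN'\otimes\cL^{-1}$ for a line bundle $\cL$, using Lemma~\ref{lemlinebundles}.
\end{itemize}

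Once $\cM'$ and $\cN'$ are globally finitely presented, \eqref{monoidaleq} provides finitely presented $\widetilde{\cM'(S)}$ and $\widetilde{\cN'(S)}$ on $X$. Since the equivalence is monoidal and fully faithful, $\lambda'$ and $\varphi'$ descend, because $\cA\simeq\widetilde{\cA^{\an}(S)}$. Surjectivity of the descended $\lambda$ follows from exactness of the equivalence. This yields a global splitting of $\cA$ on $X$, a global section of $\kX_{\cA}$, so $\alpha=[\cA]=0$.
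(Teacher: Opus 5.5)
Your overall route is the paper's: realize $\alpha$ by a finitely presented central separable algebra via Theorem~\ref{thBigBrauer}, analytify, use $[\cA^{\an}]=0$ to get a global analytic splitting $(\cM',\cN',\lambda',\varphi')$, and descend it through \eqref{monoidaleq}. You also correctly isolate the crux, which is that $\cM'$ and $\cN'$ must be globally finitely presented. Your treatment of that crux has a gap, however. Bounded local generation plus the finiteness theorem for finite-dimensional Stein spaces (the relevant input is \cite[Theorem 1]{Kripke}, not the results of \cite{Steinsurface} you cite) only produces a surjection $\cO_S^{\oplus N}\to\cM'$. That is global finite \emph{generation}, and ``this would give global finite presentation'' does not follow. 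You must also show that the kernel $\cK$ of $\cO_S^{\oplus N}\to\cM'$ is globally finitely generated, which requires a uniform bound on the number of generators of its stalks $\cK_x$. The line-bundle twist in your third bullet does not help: on a finite-dimensional Stein space, twisting by a line bundle does not affect global finite presentation.

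The missing ingredient is exactly Lemma~\ref{lemresolutions}, and you already have what it needs if you keep track of relations as well as generators. By Lemma~\ref{lemgerbe}, $(p^{\an})^*\cM'$ and $\cM^{\an}$ are locally isomorphic on $U^{\an}$, since both underlie splittings of the same algebra. Hence every stalk $\cM'_x$ has a presentation $\cO_{S,x}^{\oplus m_2}\to\cO_{S,x}^{\oplus m_1}\to\cM'_x\to 0$ with $m_1,m_2$ independent of $x$. Tensoring $0\to\cK\to\cO_S^{\oplus N}\to\cM'\to 0$ with $\cO_{S,x}/\km_x$ gives $\dim \cK_x/\km_x\cK_x\leq N+\dim\Tor_1^{\cO_{S,x}}(\cM'_x,\cO_{S,x}/\km_x)\leq N+m_2$; Schanuel's lemma gives the same bound. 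By Nakayama, each $\cK_x$ is generated by $N+m_2$ germs. A second application of Kripke's theorem then yields a surjection $\cO_S^{\oplus N'}\to\cK$, hence a global presentation of $\cM'$, and likewise for $\cN'$. With this step supplied, your descent argument goes through as you describe.
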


\begin{proof}
Fix $\alpha\in H^2_{\et}(\Spec(\cO(S)),\G_m)$ in the kernel of \eqref{compaGm+}. Applying Theorem~\ref{thBigBrauer} yields a central separable algebra $\cA$ of finite presentation on $\Spec(\cO(S))$ with~${[\cA]=\alpha}$, a surjective \'etale morphism $p:U\to X$, and a splitting~$(\cM,\cN,\lambda,\varphi)$ of $p^*\cA$ over $U$ with $\cM$ and $\cN$ quasi-coherent of finite presentation. Fix presentations $\cO_U^{\oplus m_2}\to\cO_U^{\oplus m_1}\to\cM\to 0$ and~$\cO_U^{\oplus n_2}\to\cO_U^{\oplus n_1}\to\cN\to 0$ of $\cM$ and~$\cN$.

We now analytify all these objects (in the sense of \S\ref{paranalytification}). The analytification~$\cA^{\an}$ of $\cA$ is a coherent $\cO_S$-algebra, which is globally of finite presentation as an $\cO_S$-module. As ${p^{\an}:U^{\an}\to S}$ is a surjective local biholomorphism, the data~$(\cM^{\an},\cN^{\an},\lambda^{\an},\varphi^{\an})$ shows that $\cA^{\an}$ admits local splittings, hence is a central separable $\cO_S$-algebra. The above presentations of $\cM$ and $\cN$ induce presentations $\cO_{U^{\an}}^{\oplus m_2}\to\cO_{U^{\an}}^{\oplus m_1}\to\cM^{\an}\to 0$ and~$\cO_{U^{\an}}^{\oplus n_2}\to\cO_{U^{\an}}^{\oplus n_1}\to\cN^{\an}\to 0$ of $\cM^{\an}$ and~$\cN^{\an}$.

The class $[\cA^{\an}]\in H^2(S,\cO_S^{\times})$ of $\cA^{\an}$ is the image of $[\cA]=\alpha$ by \eqref{compaGm+}, and hence vanishes. It follows that $\cA^{\an}$ is split over $S$. Therefore, there exist coherent sheaves~$\cM_S$ and $\cN_S$ on $S$, as well as a surjective morphism $\lambda_S:\cM_S\otimes_{\cO_S}\cN_S\to\cO_S$ and an algebra isomorphism $\varphi_S:\cM_S\otimes_{\cO_S}^{\lambda_S}\cN_S\isoto\cA^{\an}$. 

By Lemma \ref{lemgerbe}, the sheaves $p^*\cM_S$ and $\cM^{\an}$ are locally isomorphic on $U^{\an}$. It follows that, for all $x\in S$, there exists a resolution $\cO_{S,x}^{\oplus m_2}\to \cO_{S,x}^{\oplus m_1}\to(\cM_{S})_x\to 0$. By Lemma \ref{lemresolutions} below, there exists a resolution $\cO_{S}^{\oplus m'_2}\to \cO_{S}^{\oplus m'_1}\to\cM_S\to 0$ for some integers $m_1'$ and $m_2'$. The exact same argument shows the existence of integers $n'_1$ and $n'_2$ and of a resolution $\cO_{S}^{\oplus n'_2}\to \cO_{S}^{\oplus n'_1}\to\cN_S\to 0$.

Applying \cite[Proposition 2.5]{Steinsurface} (see \eqref{monoidaleq}) shows the existence of quasi-coherent sheaves of finite presentation $\cP$ and $\cQ$ on $\Spec(\cO(S))$ endowed with isomorphisms $\cP^{\an}\isoto\cM_S$ and~$\cQ^{\an}\isoto\cN_S$, of a surjective morphism of sheaves ${\mu:\cP\otimes_{\cO_{\Spec(\cO(S))}}\cQ\to\cO_{\Spec(\cO(S))}}$ such that $\mu^{\an}=\lambda$, and of an algebra isomorphism ${\psi:\cP\otimes^{\mu}_{\cO_{\Spec(\cO(S))}}\cQ\isoto\cA}$ with $\psi^{\an}=\varphi$. The central separable algebra $\cA$ is therefore split, and $\alpha=[\cA]=0$ in $H^2_{\et}(\Spec(\cO(S)),\G_m)$.
\end{proof}

\begin{lem}
\label{lemresolutions}
Let $S$ be a finite-dimensional Stein space. Let $\cG$ be a coherent sheaf on $S$. Assume that there exist integers $n_1$ and $n_2$ and resolutions 
\begin{equation}
\label{resol1}
\cO_{S,x}^{\oplus n_2}\to \cO_{S,x}^{\oplus n_1}\to\cG_x\to 0
\end{equation}
for all $x\in S$. Then there exist integers $n_1'$ and $n_2'$ and a resolution
\begin{equation}
\label{resol2}
\cO_{S}^{\oplus n'_2}\to \cO_{S}^{\oplus n'_1}\to\cG\to 0.
\end{equation}
\end{lem}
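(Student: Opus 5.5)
Our plan is to produce finitely many global sections generating $\cG$, then finitely many global sections generating the resulting kernel, exploiting finite-dimensionality through a covering of $S$ by finitely many open sets with locally finite components. The standard tool is: if a coherent sheaf $\cF$ on a Stein space of dimension $d$ is locally generated by at most $k$ sections at every point (i.e.\ $\dim_{\C}\cF_x\otimes\kappa(x)\leq k$ for all $x$), then $\cF$ is generated by finitely many global sections, for instance $k(d+1)$ of them. This is the classical Forster/Eisenbud--Evans type bound for Stein spaces (see \cite{Forster}); equivalently, it is what underlies the equivalence \eqref{monoidaleq} and \cite[Proposition 2.5]{Steinsurface}. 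We intend to use it as a black box.

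First step: by \eqref{resol1}, $\cG_x$ is generated by $n_1$ elements at each $x\in S$, so the fibre dimensions of $\cG$ are bounded by $n_1$. By the bound above, there is a surjection $\cO_S^{\oplus n_1'}\to\cG$ for some $n_1'$, for instance $n_1'=n_1(d+1)$ with $d=\dim S$. Let $\cK$ be its kernel, a coherent sheaf.

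Second step: we bound the number of local generators of $\cK_x$ uniformly in $x$. The stalk $\cK_x$ is the first syzygy module of $\cG_x$ with respect to a surjection $\cO_{S,x}^{\oplus n_1'}\to\cG_x$. By Schanuel's lemma applied to this surjection and to the one from \eqref{resol1}, we get $\cK_x\oplus\cO_{S,x}^{\oplus n_1}\simeq K_x\oplus\cO_{S,x}^{\oplus n_1'}$, where $K_x$ is the kernel of $\cO_{S,x}^{\oplus n_1}\to\cG_x$. Since $K_x$ is a quotient of $\cO_{S,x}^{\oplus n_2}$, the module $\cK_x$ is generated by $n_2+n_1'$ elements. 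Applying the bound again gives a surjection $\cO_S^{\oplus n_2'}\to\cK$, and hence the resolution \eqref{resol2}.

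The main obstacle is the global generation statement with a uniform fibre bound. If it cannot be quoted directly, we would prove it as follows. Stratify $S$ into finitely many locally closed analytic subsets of dimension $\leq d$. Then choose global sections inductively, using the Stein property (Cartan's Theorem A, and surjectivity of $H^0(S,\cF)$ onto $H^0$ of the fibres over a discrete set) together with a general-position argument: at each stage, the locus where the chosen sections fail to generate drops in dimension. After $d+1$ rounds of $k$ sections each, this locus is empty. Making these general-position choices work on a noncompact $S$, through a Baire-type argument in the Fréchet space $H^0(S,\cF)$, is the delicate part.
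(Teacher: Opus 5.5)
Your proposal is correct and follows the paper's proof. The global-generation result you invoke is exactly what the paper quotes as \cite[Theorem 1]{Kripke}, with global generation of $\cK$ and $\cG$ supplied by the Stein property, so the fallback sketch in your last paragraph is not needed. The only difference is in the local step: you bound the number of generators of $\cK_x$ by $n_1'+n_2$ via Schanuel's lemma, whereas the paper gets the same bound from the exact sequence $\Tor_1^{\cO_{S,x}}(\cG_x,\cO_{S,x}/\km_x)\to\cK_x/\km_x\cK_x\to(\cO_{S,x}/\km_x)^{\oplus n_1'}$ together with Nakayama's lemma; both arguments are valid.
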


\begin{proof}
In view of \eqref{resol1}, all the stalks of $\cG$ are generated by $n_1$ germs of sections. As~$\cG$ is moreover globally generated because $S$ is Stein, it follows from \cite[Theorem 1]{Kripke} that there exist an integer $n_1'$ and a surjective morphism  $\cO_{S}^{\oplus n'_1}\to\cG$. Let $\cK$ be the kernel of this morphism, so one has a short exact sequence
\begin{equation}
\label{sesres}
0\to\cK\to \cO_{S}^{\oplus n'_1}\to\cG\to 0.
\end{equation}

Fix $x\in S$. Let  $\km_x\subset\cO_{S,x}$ be the maximal ideal of $\cO_{S,x}$. Tensoring \eqref{sesres} with the~$\cO_{S,x}$\nobreakdash-module $\cO_{S,x}/\km_x$ gives rise to an exact sequence
\begin{equation}
\label{eqTor}
\Tor_1^{\cO_{S,x}}(\cG_x,\cO_{S,x}/\km_x)\to \cK_x/\km_x\cK_x\to (\cO_{S,x}/\km_x)^{\oplus n_1'}.
\end{equation}
The exact sequence \eqref{resol1} shows that $\Tor_1^{\cO_{S,x}}(\cG_x,\cO_{S,x}/\km_x)$ is an $\cO_{S,x}/\km_x$-vector space of dimension $\leq n_2$. In view of \eqref{eqTor}, the $\cO_{S,x}/\km_x$-vector space $\cK_x/\km_x\cK_x$ has dimension $\leq n_1'+n_2$. By Nakayama's lemma, we deduce that $\cK_x$ is generated by~$n_1'+n_2$ germs of sections. As $\cK$ is moreover globally generated because $S$ is Stein, another application of \cite[Theorem 1]{Kripke} provides us with an integer $n_2'$ and a surjective morphism $\cO_{S}^{\oplus n'_2}\to\cK$. This produces the desired resolution \eqref{resol2}.
\end{proof}

\begin{cor}
\label{corvanish2}
Let $S$ be a Stein space of dimension $\leq 2$. Then 
$${H^2_{\et}(\Spec(\cO(S)),\G_m)=0.}$$
\end{cor}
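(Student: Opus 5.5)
By Theorem \ref{th7+}, the natural morphism $H^2_{\et}(\Spec(\cO(S)),\G_m)\to H^2(S,\cO_S^{\times})$ is injective; note that $S$ is finite-dimensional, since its dimension is at most $2$. It therefore suffices to show that $H^2(S,\cO_S^{\times})=0$ for every Stein space $S$ of dimension $\leq 2$.

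To compute this group, I will use the exponential exact sequence $0\to\Z\xrightarrow{2\pi i}\cO_S\xrightarrow{\exp}\cO_S^{\times}\to 0$, which is valid on any complex space (see \cite[Lemma p.\,142]{GRStein}). Since $S$ is Stein, Cartan's Theorem B gives $H^2(S,\cO_S)=H^3(S,\cO_S)=0$. The long exact sequence then yields an isomorphism
$$H^2(S,\cO_S^{\times})\isoto H^3(S,\Z).$$

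The remaining step, which carries the real content, is to show that $H^3(S,\Z)=0$. Here I invoke the Andreotti--Frankel--Narasimhan vanishing theorem in the form proved by Hamm: a Stein space of dimension $d$ has the homotopy type of a CW complex of dimension $\leq d$ (\cite[Korollar]{Hamm}, already used in the proof of Theorem \ref{cor1+}). With $d\leq 2$, this gives $H^3(S,\Z)=0$.

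One point needs checking: the sheaf cohomology $H^3(S,\Z)$ appearing in the exponential sequence must agree with the singular cohomology to which Hamm's result applies. This holds because complex spaces are locally contractible and paracompact, being triangulable. Combining the three steps, $H^2(S,\cO_S^{\times})=0$, and hence $H^2_{\et}(\Spec(\cO(S)),\G_m)=0$ by the injectivity of Theorem \ref{th7+}.
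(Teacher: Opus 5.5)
Your proposal is correct and follows the paper's proof: injectivity from Theorem \ref{th7+}, the exponential sequence together with Cartan's Theorem B to get $H^2(S,\cO_S^{\times})\simeq H^3(S,\Z)$, and Hamm's result to conclude $H^3(S,\Z)=0$. Your extra checks (that dimension $\leq 2$ makes $S$ finite-dimensional, and that sheaf and singular cohomology agree) are fine and left implicit in the paper.
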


\begin{proof}
The exponential exact sequence ${0\to\Z\xrightarrow{2\pi i}\cO_S\xrightarrow{\exp}\cO_S^{\times}\to 0}$ and the vanishing of $H^2(S,\cO_S)$ and $H^3(S,\cO_S)$ (as $S$ is Stein) show that $H^2(S,\cO_S^{\times})$ is isomorphic to $H^3(S,\Z)$, which is zero because $S$ has the homotopy type of a CW complex of dimension~$2$ (see~\cite[Korollar]{Hamm}). We deduce from Theorem~\ref{th7+} that $H^2_{\et}(\Spec(\cO(S)),\G_m)$ vanishes.
\end{proof}

A similar albeit simpler argument yields the following algebraic analogue of Theorem \ref{th7+}. It can be viewed as a variant of \cite[Propositions 1.3 and 1.4]{Schroeranalytic} taking nontorsion classes of $H^2_{\et}(V,\G_m)$ into account.  In its statement, analytifications are meant in the sense of \cite{GAGA} (see also~\cite[Exp.\,XII, \S1]{SGA1} when $V$ is possibly not reduced or not projective).

\begin{prop}
\label{propinjalgan}
If $V$ is a proper algebraic variety over $\C$, the natural morphism 
\begin{equation}
\label{alganGm}
H^2_{\et}(V,\G_m)\to H^2(V^{\an},\cO_{V^{\an}}^{\times})
\end{equation}
is injective
\end{prop}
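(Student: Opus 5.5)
We follow the proof of Theorem~\ref{th7+}, replacing Stein-specific tools by GAGA. Fix $\alpha\in H^2_{\et}(V,\G_m)$ in the kernel of \eqref{alganGm}. Since $V$ is of finite type over $\C$, it is noetherian and in particular quasi-compact and quasi-separated. Theorem~\ref{thBigBrauer} therefore provides:
\begin{enumerate}[label=(\roman*)]
\item a coherent central separable $\cO_V$-algebra $\cA$ with $[\cA]=\alpha$;
\item a surjective \'etale morphism $p:U\to V$;
\item a splitting $(\cM,\cN,\lambda,\varphi)$ of $p^*\cA$ over $U$, with $\cM$ and $\cN$ coherent.
\end{enumerate}

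The first step is to analytify in the sense of \cite{GAGA}. The morphism $p^{\an}:U^{\an}\to V^{\an}$ is a surjective local biholomorphism. The data $(\cM^{\an},\cN^{\an},\lambda^{\an},\varphi^{\an})$ is a splitting of $(p^{\an})^*\cA^{\an}$: surjectivity of $\lambda^{\an}$ and the algebra isomorphism are preserved because analytification is exact and monoidal. Hence $\cA^{\an}$ is a coherent central separable $\cO_{V^{\an}}$-algebra. Comparing the gerbe $\kX_{\cA}$ with the gerbe of Lemma~\ref{lemgerbe} via $\varepsilon_V$, one checks that $[\cA^{\an}]$ is the image of $\alpha$ under \eqref{alganGm}. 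Consequently $[\cA^{\an}]=0$, and the $\cO_{V^{\an}}^{\times}$-gerbe $\kX_{\cA^{\an}}$ has a global object. This object is a splitting over all of $V^{\an}$: coherent sheaves $\cM'$ and $\cN'$ on $V^{\an}$, a surjection $\lambda':\cM'\otimes\cN'\to\cO_{V^{\an}}$, and an algebra isomorphism $\varphi':\cM'\otimes^{\lambda'}\cN'\isoto\cA^{\an}$.

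The second step algebraizes this splitting. This is the point where the argument is simpler than in the Stein case. Since $V$ is proper, the GAGA theorem of \cite[Exp.\,XII, Th\'eor\`eme~4.4]{SGA1} says that analytification is an equivalence between coherent sheaves on $V$ and on $V^{\an}$. The equivalence is compatible with tensor products and fully faithful on morphisms. We may therefore apply it as follows:
\begin{itemize}
\item there are coherent $\cP$ and $\cQ$ on $V$ with $\cP^{\an}\simeq\cM'$ and $\cQ^{\an}\simeq\cN'$;
\item there is a morphism $\mu:\cP\otimes\cQ\to\cO_V$ with $\mu^{\an}=\lambda'$; it is surjective because analytification is exact and faithfully flat on stalks, so the cokernel of $\mu$ has zero analytification and hence vanishes;
\item there is an isomorphism $\psi:\cP\otimes\cQ\isoto\cA$ of $\cO_V$-modules with $\psi^{\an}=\varphi'$.
\end{itemize}
The map $\psi$ is compatible with the products \eqref{elementary}, since this compatibility is an equality of morphisms $(\cP\otimes\cQ)^{\otimes2}\to\cA$ that holds after analytification, and analytification is faithful. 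Thus $(\cP,\cQ,\mu,\psi)$ is a global splitting of $\cA$ on $V$. The $\G_m$-gerbe $\kX_{\cA}$ then has a global section, so $\alpha=[\cA]=0$.

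The main obstacle is the compatibility claim in the first step: that $[\cA^{\an}]$ is indeed the image of $[\cA]$. Concretely, this requires the pullback along $\varepsilon_V$ of the étale gerbe $\kX_{\cA}$ to map to $\kX_{\cA^{\an}}$ compatibly with $\G_m^{\an}\to\cO_{V^{\an}}^{\times}$. The same identification is used implicitly in the proof of Theorem~\ref{th7+}. I would argue it by constructing a morphism of gerbes that sends a splitting over an étale $U'\to V$ to its analytification over $U'^{\an}$, and then invoke functoriality of gerbe classes under morphisms of sites.

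Note that no analogue of Lemma~\ref{lemresolutions} is needed here. Properness and GAGA algebraize arbitrary coherent sheaves, whereas in the Stein case one had to check that the sheaves were globally of finite presentation.
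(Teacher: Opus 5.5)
Your proposal is correct and follows the paper's proof. You represent $\alpha$ by a coherent central separable algebra via Theorem~\ref{thBigBrauer}, use $[\cA^{\an}]=0$ to obtain a global analytic splitting, algebraize it by GAGA, and conclude that $\cA$ is split; the details you add (surjectivity of $\mu$ and multiplicativity of $\psi$ via exactness and faithfulness of analytification, and the compatibility of gerbe classes, which the paper also uses implicitly as in Theorem~\ref{th7+}) are sound.
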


\begin{proof}
Fix $\alpha$ in the kernel of \eqref{alganGm}. Let $\cA$ be as in Theorem \ref{thBigBrauer}. As~$[\cA^{\an}]=0$, there exist coherent sheaves $\cM$ and $\cN$ on $V^{\an}$, a surjective morphism of coherent sheaves ${\lambda:\cM\otimes _{\cO_{V^{\an}}}\cN\to \cO_{V^{\an}}}$ and an algebra isomorphism ${\varphi:\cM\otimes _{\cO_{V^{\an}}}^{\lambda}\cN\isoto \cA^{\an}}$. By Serre's GAGA theorem (see \cite[\S 12]{GAGA} or \cite[Exp.\,XII, Th\'eor\`eme 4.4]{SGA1}), one can algebraize $(\cM,\cN,\lambda,\varphi)$. It follows that $\cA$ is split, hence that ${\alpha=0}$.
\end{proof}

\subsection{A nontorsion class in the cohomological Brauer group}

The next proposition shows that the use of Azumaya algebras (instead of more general central separable algebras) would have been insufficient to prove Theorem \ref{th7+}.

\begin{prop}
\label{propnontorsion}
There exists a connected normal Stein space $S$ of dimension $3$ such that $H^2_{\et}(\Spec(\cO(S)),\G_m)$ is not a torsion group.
\end{prop}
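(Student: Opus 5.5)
We need a connected normal Stein space $S$ of dimension $3$ with a nontorsion class in $H^2_{\et}(\Spec(\cO(S)),\G_m)$. The plan is to use the Kummer sequence together with the already proved comparison isomorphisms. For each $n$, the proof of Theorem \ref{th6+} gives a surjection $H^2_{\et}(X,\Z/n)\to H^2_{\et}(X,\G_m)[n]$, where $X:=\Spec(\cO(S))$. Torsion classes in $H^2_{\et}(X,\G_m)$ form $\Br(\cO(S))\cong H^3(S,\Z)_{\tors}$ by Gabber and Theorem \ref{cor1+}. So it suffices to produce some class that is not of this form. The most natural source is Weil divisors that are not Cartier: on a normal space, the local class groups feed into $H^2(\G_m)$.

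We use the exact sequence $0\to\G_m\to\cK^\times\to\mathcal{D}iv\to 0$ and compare Cartier and Weil divisors. Concretely, take $S$ to be a normal Stein space of dimension $3$ with an isolated singularity $0$ whose local class group $\mathrm{Cl}(\cO_{S,0})$ contains a nontorsion element. A cone over a smooth surface with large Picard group works, for example the affine cone over a smooth cubic surface, analytified. There $\mathrm{Cl}=\Pic(Y)/\Z H$ has rank $6$, which is nontorsion. The cone in $\C^4$ is Stein, normal and of dimension $3$.

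Next we produce the class. Let $D$ be a Weil divisor through $0$ whose class in $\mathrm{Cl}(\cO_{S,0})$ is nontorsion, and let $j:S^\circ=S\setminus\{0\}\hookrightarrow S$. The line bundle $\cO(D)|_{S^\circ}$ gives an element of $H^1(S^\circ,\cO^\times)$. The local cohomology sequence then gives $\partial[\cO(D)]\in H^2_{\{0\}}(S,\cO_S^\times)$, whose image in $H^2(S,\cO_S^\times)$ we want to be nontorsion. Rather than work analytically, it is better to work algebraically. Let $V$ be the algebraic cone and $R=\cO(V)$. In $H^2_{\et}(V\setminus 0)$ and $H^2_{\et}(V,\G_m)$, Grothendieck's local computation (see \cite[II]{Grothbrauer}) shows that for a normal isolated singularity, the local Picard group of the henselization injects via the boundary into $H^2_{\{0\}}(\G_m)$. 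A nontorsion local Picard element then yields a nontorsion element of $H^2_{\et}(V,\G_m)$, provided it does not lift to $\Pic(V\setminus 0)$ modulo the global part. Pulling back along $\Spec(\cO(S))\to V$ (with $S=V^{\an}$, or a small Stein neighbourhood) then transports the class. To check that it remains nontorsion, map further to the local ring $\cO_{S,0}$, the analytic henselian local ring. There we need $H^2_{\et}(\Spec\cO_{S,0},\G_m)$ to compute the nontorsion part via the punctured spectrum: $H^1(\Spec\cO_{S,0}\setminus\km,\G_m)\cong\mathrm{Cl}(\cO_{S,0})$ modulo the trivial part, embedded via the boundary because $\Spec \cO_{S,0}$ is local, so its $\Pic$ vanishes. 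Since $H^2$ of a henselian local ring equals that of its residue field $\C$, which is zero, this local approach kills everything. The class must therefore come from a global phenomenon, and the argument needs adjustment.

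The revised route is to make the nontorsion class come from Proposition/Theorem \ref{th7+}: prove nontorsion in the analytic target $H^2(S,\cO_S^\times)\cong H^3(S,\Z)$ after constructing an étale class mapping to a nontorsion element. So we choose $S$ with $H^3(S,\Z)$ containing $\Z$. Then we realize a nontorsion topological class by a central separable algebra built from a non-locally-free reflexive elementary datum glued along a cover. The main obstacle is exactly this: exhibiting an explicit central separable $\cO(S)$-algebra (via Theorem \ref{thBigBrauer} in reverse and \eqref{monoidaleq}) whose analytic gerbe class is nontorsion in $H^3(S,\Z)$. Non-normal or singular points are needed to allow non-locally-free splitting modules, since in the locally factorial case Theorem \ref{th4} forces torsion. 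I would try a normal cone singularity $S$ whose link has nontorsion $H^3$ supported near the singularity, and glue splittings $\cM,\cN$ given by reflexive rank-one sheaves $\cO(D)$, $\cO(-D)$ with $\lambda$ the multiplication map, whose transition data realize the nontorsion class.
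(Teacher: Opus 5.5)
There is a genuine gap: the proposal never produces a class, and the concrete spaces it suggests cannot work. You rightly notice that the purely local approach is empty, since $\cO_{S,0}$ is strictly henselian. But the affine cone over a smooth cubic surface is contractible, and so is any small ball neighbourhood of its vertex (by the conic structure theorem), or any cone whatever the cohomology of its link. Hence $H^3(S,\Z)=0$. The exponential sequence on the Stein space $S$ then gives $H^2(S,\cO_S^{\times})\hookrightarrow H^3(S,\Z)=0$, and Theorem \ref{th7+} forces $H^2_{\et}(\Spec(\cO(S)),\G_m)=0$.

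This is exactly the situation your own proviso warns about: on a cone, the local divisor classes at the vertex are accounted for by global Weil divisors (cones over curves on the cubic surface). Your revised route does not repair this. Theorem \ref{th7+} is an injectivity statement and cannot create classes. Gluing an analytic central separable algebra from $\cO(D)$, $\cO(-D)$ and transporting it via \eqref{monoidaleq} leaves the essential point open: you would need the resulting algebra on $\Spec(\cO(S))$ to be \'etale-locally elementary. Analytic splittings near points of $S$ give no control over the other points of $\Spec(\cO(S))$, and Corollary \ref{th7++} shows that analytic gerbe classes need not come from \'etale ones.

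The missing idea is to obtain the class by pull-back from an algebraic variety on which Grothendieck's theory already produces a nontorsion class, so that its \'etale nature is automatic. The paper takes a cubic threefold $V\subset\P^4_{\C}$ with a single node. $V$ is Zariski-locally factorial, while the henselization at the node has class group $\Z$. These local classes therefore do not come from global Weil divisors, and \cite[II, Proposition~1.7 and (7bis)]{Grothbrauer} gives a subgroup $\Z\subset H^2_{\et}(V,\G_m)$.

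The paper then carries this class to a Stein space:
\begin{enumerate}[label=(\roman*)]
\item Since $V$ is proper, Proposition \ref{propinjalgan} (via GAGA) shows that a nontorsion $\alpha$ stays nontorsion in $H^2(V^{\an},\cO_{V^{\an}}^{\times})$.
\item It is then nontorsion in $H^3(V^{\an},\Z)$, because $H^2(V,\cO_V)=0$.
\item Removing a smooth hyperplane section $D$ preserves this, as $H^3_{D^{\an}}(V^{\an},\Z)\simeq H^1(D^{\an},\Z)=0$.
\item Set $S:=(V\setminus D)^{\an}$, which is Stein, connected and normal of dimension $3$, and let $f:\Spec(\cO(S))\to V\setminus D$ be the natural morphism. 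The class $f^*(\alpha|_{V\setminus D})$ maps to the nontorsion class $\alpha^{\an}|_S$ in $H^2(S,\cO_S^{\times})$, hence is itself nontorsion.
\end{enumerate}
No injectivity result on the Stein side is needed.
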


\begin{proof}
Let $V\subset\P^4_{\C}$ be a cubic threefold with a single node. The variety $V$ is locally factorial (see \eg \cite[Theorem]{CDG}),
%There are more general results by Cheltsov. Hard to understand who is the first to deal with cubic threefolds with one node. Maybe [Werner, Kleine Auflösungen spezieller dreidimensionaler Varietäten, Satz p.27], though no direct connections with class group are made?
but its henselization at the singular point has class group isomorphic to $\Z$. It therefore follows from \cite[II, Proposition~1.7 and (7bis) p.\,75]{Grothbrauer} that $H^2_{\et}(V,\G_m)$ contains a subgroup isomorphic to $\Z$.

Fix a nontorsion $\alpha\in H^2_{\et}(V,\G_m)$. The image~$\alpha^{\an}\in H^2(V^{\an},\cO_{V^{\an}}^{\times})$ of $\alpha$ by~\eqref{alganGm} is nontorsion by Proposition \ref{propinjalgan} (here, the analytification is meant in the sense of~\cite{GAGA} or \cite[Exp.\,XII]{SGA1}). Since $H^2(V^{\an},\cO_{V^{\an}})=H^2(V,\cO_V)=0$ (by GAGA, see \cite[\S 12, Th\'eor\`eme 1]{GAGA}), the image $\beta\in H^3(V^{\an},\Z)$ of~$\alpha^{\an}$ by the boundary map of the exponential exact sequence is nontorsion.

Let $D\subset V$ be a smooth hyperplane section, and set $U:=V\setminus D$. The complex space $S:=U^{\an}$ is connected normal of dimension $3$ (because so is $U$) and Stein because $U$ is affine. The long exact sequence of cohomology with support
$$H^3_{D^{\an}}(V^{\an},\Z)\to H^3(V^{\an},\Z)\to H^3(S,\Z)$$
and the Thom isomorphism $H^3_{D^{\an}}(V^{\an},\Z)\simeq H^1(D^{\an},\Z)=0$ show that $\beta|_{S}$ is nontorsion. A fortiori, the class $\alpha^{\an}|_S\in H^2(S,\cO_S^{\times})$ is nontorsion. 

Consider the morphism $f:\Spec(\cO(S))\to U$ induced by the analytification morphism $\cO(U)\to \cO(U^{\an})=\cO(S)$ and the class $f^*(\alpha|_U)\in H^2_{\et}(\Spec(\cO(S)),\G_m)$. As the image of $f^*(\alpha|_U)$ by the natural morphism $H^2_{\et}(\Spec(\cO(S)),\G_m)\to H^2(S,\cO_S^{\times})$ coincides with the nontorsion class $\alpha^{\an}|_S$, the class $f^*(\alpha|_U)$ itself is nontorsion. This proves the proposition.
\end{proof}

\begin{rems}
(i)
The example of Proposition \ref{propnontorsion} is optimal in several respects. Let~$S$ be a Stein space.
First, if $S$ is finite-dimensional and locally factorial (\eg a manifold), then~$H^2_{\et}(\Spec(\cO(S)),\G_m)$ is torsion by Theorem \ref{th4+} below. Second, if $S$ has dimension~$\leq 2$, then the group $H^2_{\et}(\Spec(\cO(S)),\G_m)$ vanishes by Corollary~\ref{corvanish2}.

(ii) In contrast to (i), there exists a connected normal algebraic surface $V$ over~$\C$ for which $H^2_{\et}(V,\G_m)$ is not a torsion group (see \cite[II, Remarque 1.11 b)]{Grothbrauer}; we note that the example provided there is erroneously claimed to be locally factorial).
\end{rems}

\section{Holomorphic functions versus meromorphic functions}

\subsection{Twisted sheaves}
\label{partwisted}

We recall the requisite background on twisted sheaves used in the proof of Theorem \ref{th4+}. We refer to \cite[\S 2]{Lieblichmoduli} for more details.

Let $(X,\cO_X)$ be a ringed site. Giraud has constructed a bijection between the set of equivalence classes of $\cO_X^{\times}$\nobreakdash-gerbes on~$X$ and the cohomology group $H^2(X,\cO_X^{\times})$ (see \cite[Chap.\,IV, \S3.4]{Giraud}).

Fix $\alpha\in H^2(X,\cO_X^{\times})$ and let~$\pi:\kX\to X$ be an $\cO_X^{\times}$-gerbe representing $\alpha$. Endow~$\kX$ with the structure of a site ringed by setting $\cO_{\kX}:=\pi^*\cO_X$ (see \cite[Definition~2.1.1.1]{Lieblichmoduli}). An $\alpha$-\textit{twisted sheaf} on $X$ is an~$\cO_{\kX}$\nobreakdash-module~$\cF$ such that the two natural right $\cO_{\kX}^{\times}$-actions on $\cF$ (induced by the~$\cO_{\kX}$\nobreakdash-module structure and by the action of the inertia of $\kX$) coincide (see \cite[Definition 2.1.2.2]{Lieblichmoduli} for a precise definition). 
Note that when $(X,\cO_X)$ is a scheme, the gerbe $\kX$ is an algebraic stack.

 We will make use of another equivalent point of view on twisted sheaves, due to C\u{a}ld\u{a}raru~\cite{Caldararu}. Let $U_{\bullet}\to X$ be a hypercovering and let ${a\in H^0(U_2,\cO_{U_2}^{\times})}$ be a \v{C}ech cocycle~ representing $\alpha$. We denote by~${p_0,p_1:U_1\to U_0}$ the two projections and by $q_0,q_1,q_2:U_2\to U_1$ the three projections. A C\u{a}ld\u{a}raru\nobreakdash-$\alpha$\nobreakdash-twisted sheaf is an~$\cO_{U_0}$-module~$\cG$ endowed with an isomorphism~${\psi:p_0^*\cG\isoto p_1^*\cG}$ such that~$(q_1^*\psi)^{-1}\circ q_2^*\psi\circ q_0^*\psi=a$. Lieblich has constructed an equivalence between the categories of~$\alpha$\nobreakdash-twisted and C\u{a}ld\u{a}raru\nobreakdash-$\alpha$\nobreakdash-twisted sheaves on $X$ (see \cite[Proposition~2.1.3.3]{Lieblichmoduli}; the proof appears in \cite[\S 2.1.3]{Lieblichthesis}). 

If $(X,\cO_X)$ is a scheme, Lieblich has verified that an $\alpha$-twisted sheaf on $X$ is quasi-coherent if and only if so is its associated C\u{a}ld\u{a}raru-$\alpha$\nobreakdash-twisted sheaf (see \cite[Lemma 2.2.1.4]{Lieblichmoduli}). The exact same proof shows that an $\alpha$-twisted sheaf on $X$ is of finite presentation if and only if so is its associated C\u{a}ld\u{a}raru-$\alpha$\nobreakdash-twisted sheaf.

\subsection{Injectivity for locally factorial Stein spaces}

\begin{thm}
\label{th4+}
The natural morphism
\begin{equation}
\label{H2OM}
H^2_{\et}(\Spec(\cO(S)),\G_m)\to H^2_{\et}(\Spec(\cM(S)),\G_m)
\end{equation}
is injective for any finite-dimensional locally factorial Stein space. 
\end{thm}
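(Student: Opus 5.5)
Let $X:=\Spec(\cO(S))$ and fix $\alpha\in H^2_{\et}(X,\G_m)$ whose image in $H^2_{\et}(\Spec(\cM(S)),\G_m)$ vanishes. By Theorem \ref{th7+}, it suffices to show that the analytic class $\alpha^{\an}\in H^2(S,\cO_S^{\times})$ vanishes. The plan is to produce an $\alpha^{\an}$-twisted invertible sheaf on $S$: a twisted line bundle trivializes the gerbe.

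First, descend the generic splitting to a twisted sheaf on $X$ of finite presentation. Since $\alpha_{\cM(S)}=0$, there is an invertible $\alpha$-twisted sheaf $\cL_K$ on $\Spec(\cM(S))$. By a limit argument, there is $g\in\cO(S)$ that is a nonzerodivisor (hence invertible in $\cM(S)$; if $S$ is reducible or disconnected, $\cM(S)$ is a product and we work componentwise) such that $\cL_K$ comes from an invertible $\alpha$-twisted sheaf on $X_g$. I would then extend it to an $\alpha$-twisted sheaf $\cF$ of finite presentation on $X$; for this I use C\u{a}ld\u{a}raru's description over a hypercovering coming from Theorem \ref{thBigBrauer}, together with the fact that finite-presentation quasi-coherent twisted sheaves extend from open subsets of qcqs stacks. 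Now analytify (\S\ref{paranalytification}, via the C\u{a}ld\u{a}raru description and the hypercovering $U_\bullet^{\an}$). This gives a coherent $\alpha^{\an}$-twisted sheaf $\cG:=\cF^{\an}$ on $S$ that is invertible on the dense open set $\{g\neq 0\}$, i.e.\ of generic rank $1$ on each component.

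Second, use local factoriality to make $\cG$ invertible. Locally on $S$, the twisting is trivial (the gerbe is locally neutral), so $\cG$ becomes an ordinary coherent sheaf of generic rank $1$. Replace $\cG$ by its reflexive hull $\cG^{\vee\vee}$ modulo torsion; this is compatible with the twisting, since twisted duals are $\alpha^{-1}$-twisted and double duals are $\alpha$-twisted. A rank-one reflexive module over a factorial local ring is free. The local rings $\cO_{S,x}$ are factorial, and coherence lets us pass from stalks to neighborhoods. Hence $\cG^{\vee\vee}$ is an invertible $\alpha^{\an}$-twisted sheaf, so the gerbe representing $\alpha^{\an}$ is neutral and $\alpha^{\an}=0$. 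Theorem \ref{th7+} then gives $\alpha=0$.

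The main obstacle is the first step. We need to extend the generically defined invertible twisted sheaf to a twisted sheaf of finite presentation on all of $X$, which is non-noetherian, and do so in a way that analytifies correctly to a coherent twisted sheaf whose generic rank is $1$ on every irreducible component of $S$. I would try to first spread out the splitting over $X_g$, then take the image of a map from a finitely presented twisted sheaf (built from the central separable algebra $\cA$ of Theorem \ref{thBigBrauer}, e.g.\ $\cA$ acting on itself yields $\alpha$-twisted sheaves) into $j_*\cL$. The components must be handled by choosing $g$ nonvanishing identically on each one; finite-dimensionality and the Stein property ensure that such $g$ exist, by realizing the nonzerodivisors of $\cM(S)$ via holomorphic functions.
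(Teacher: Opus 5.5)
Your proposal is correct and follows essentially the same route as the paper. It spreads the generic triviality out to $\Spec(\cO(S)[\frac{1}{a}])$ for a nonzerodivisor $a$, extends an invertible twisted sheaf from there to a finitely presented $\alpha$-twisted sheaf, analytifies it through a C\u{a}ld\u{a}raru cocycle on an \'etale hypercovering, takes the reflexive hull using the factoriality of the local rings (of $U_0^{\an}$), and concludes with Theorem \ref{th7+}. Two corrections to your first step. The extension should be done on the gerbe $\kX$ itself, via Rydh's extension of finitely presented sheaves followed by Lieblich's weight decomposition; your fallback of taking an image in $j_*\cL$ only yields a sheaf of finite type, which cannot be analytified in the sense of \S\ref{paranalytification}. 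Also, the hypercovering comes from Verdier's theorem, not from Theorem \ref{thBigBrauer}.
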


\begin{proof}
Fix $\alpha\in H^2_{\et}(\Spec(\cO(S)),\G_m)$ in the kernel of \eqref{H2OM}. Using \cite[Theorem~\href{https://stacks.math.columbia.edu/tag/09YQ}{09YQ}]{SP} (applied with $\cF_i=\G_m$), one can find a nonzero divisor $a\in\cO(S)$ such that $\alpha|_{\Spec(\cO(S)[\frac{1}{a}])}\in  H^2_{\et}(\Spec(\cO(S)[\frac{1}{a}]),\G_m)$ vanishes.

Let $\kX\to\Spec(\cO(S))$ be a $\G_m$-gerbe representing $\alpha$ (see \S\ref{partwisted}). Our choice of~$a$ implies that the $\G_m$-gerbe $\kX_{\Spec(\cO(S)[\frac{1}{a}])}\to\Spec(\cO(S)[\frac{1}{a}])$ is trivial. It follows that there exists an invertible $\alpha$-twisted sheaf $\cL$ on $\Spec(\cO(S)[\frac{1}{a}])$ (see \cite[Lemma~3.1.1.8]{Lieblich}). Extend $\cL$ to a quasi-coherent sheaf of finite presentation $\cF$ on~$\kX$ (combine \cite[Lemma 4.3 (C2)$\Rightarrow$(E2)]{Rydh1} and \cite[Corollary B]{Rydh2}).
%Extending quasi-coherent sheaves by pushfoward is [SP, Proposition 077A].
Replacing~$\cF$ with one summand of the decomposition \cite[Proposition 2.2.1.6]{Lieblichmoduli}, we may ensure that $\cF$ is an $\alpha$-twisted sheaf on $\Spec(\cO(S))$.

Fix an \'etale hypercovering $U_{\bullet}\to \Spec(\cO(S))$ and a cocycle ${a\in H^0(U_2,\cO_{U_2}^{\times})}$ representing $\alpha$ (these exist by a theorem of Verdier, see \cite[Proposition~\href{https://stacks.math.columbia.edu/tag/09VZ}{09VZ}]{SP}).
%need the site to have fiber products. 
Let~$(\cG,\psi)$ be the C\u{a}ld\u{a}raru-$\alpha$\nobreakdash-twisted sheaf of finite presentation associated to $\cF$ (with the notation of \S\ref{partwisted}). Analytifying this data (in the sense of \S\ref{paranalytification}) gives rise to a hypercovering~$U^{\an}_{\bullet}\to S$ in the site of local isomorphisms of $S$ (which is equivalent to the classical site of $S$, see \cite[XI, \S4.0]{SGA43}), to a cocycle~$a^{\an}\in H^0(U_2^{\an},\cO_{U_2^{\an}}^{\times})$, and to a C\u{a}ld\u{a}raru-$\alpha^{\an}$\nobreakdash-twisted coherent sheaf $(\cG^{\an},\psi^{\an})$ on $S$ (where $\alpha^{\an}$ is the image of $\alpha$ by the natural morphism~$H^2_{\et}(\Spec(\cO(S)),\G_m)\to H^2(S,\cO_S^{\times})$).

Since the restriction of $\cF$ to $\kX_{\Spec(\cO(S)[\frac{1}{a}])}$ is invertible, the restriction of $\cG$ to~$(U_0)_{\Spec(\cO(S)[\frac{1}{a}])}$ is invertible, and hence the restriction of $\cG^{\an}$ to $U_0\setminus\{a=0\}$ is invertible. As $a\in\cO(S)$ is a nonzerodivisor, the zero locus of~$a$ in~$U_0$ is nowhere dense in~$U_0$. In addition, since the projection $U_0\to S$ is a local biholomorphism, the local rings of $U_0$ are factorial. It follows that the double dual (or reflexive hull)~$\cH$ of $\cG^{\an}$ is invertible.

Let ${p_0,p_1:U_1\to U_0}$ and $q_0,q_1,q_2:U_2\to U_1$ be the projections (as in \S\ref{partwisted}), so the isomorphism~$\psi^{\an}:(p_0^{\an})^*\cG^{\an}\isoto (p_1^{\an})^*\cG^{\an}$ satisfies 
$$((q_1^{\an})^*\psi^{\an})^{-1}\circ (q_2^{\an})^*\psi^{\an}\circ (q_0^{\an})^*\psi^{\an}=a^{\an}.$$
The isomorphism $\chi:(p_0^{\an})^*\cH\isoto (p_1^{\an})^*\cH$ induced by $\psi^{\an}$ then satisfies 
\begin{equation}
\label{cocycle}
((q_1^{\an})^*\chi)^{-1}\circ (q_2^{\an})^*\chi\circ (q_0^{\an})^*\chi=a^{\an}.
\end{equation}
This means that $(\cH,\chi)$ is an invertible C\u{a}ld\u{a}raru-$\alpha^{\an}$\nobreakdash-twisted coherent sheaf on $S$.

As $\cH$ is invertible hence locally isomorphic to $\cO_{U_0}$, one can use \cite[Lemma~\href{https://stacks.math.columbia.edu/tag/01GJ}{01GJ}]{SP} to find a hypercovering $V_{\bullet}\to S$ (in the site of local isomorphisms of $S$) and a morphism of hypercoverings $\xi:V_{\bullet}\to U^{\an}_{\bullet}$ such that $\xi^*\cH\simeq\cO_{V_0}$. Using this identification, the isomorphism $\xi^*{\chi}:\cO_{V_1}\isoto\cO_{V_1}$ is given by multiplication by an element~$b\in\cO(V_1)^{\times}$. Pulling back \eqref{cocycle} by $\xi$ shows that $\xi^*(a^{\an})$ is the coboundary of~$b$. It follows that $\alpha^{\an}=[a^{\an}]=[\xi^*(a^{\an})]=0$ in $H^2(S,\cO_S^{\times})$.

Since the natural morphism ~$H^2_{\et}(\Spec(\cO(S)),\G_m)\to H^2(S,\cO_S^{\times})$ is injective by Theorem \ref{th7+}, we deduce that $\alpha=0$. The theorem is proved.
\end{proof}

\begin{cor}
\label{th3+}
The natural morphism
\begin{equation}
\label{BrtoH2}
\Br(\cO(S))\to H^2_{\et}(\Spec(\cO(S)),\G_m)
\end{equation}
is an isomorphism for any finite-dimensional locally factorial Stein space. 
\end{cor}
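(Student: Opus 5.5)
By Gabber's theorem, the injection \eqref{BrtoH2} has image exactly the torsion subgroup of $H^2_{\et}(\Spec(\cO(S)),\G_m)$. It is therefore enough to prove that $H^2_{\et}(\Spec(\cO(S)),\G_m)$ is a torsion group when $S$ is a finite-dimensional locally factorial Stein space. By Theorem \ref{th4+}, this group injects into $H^2_{\et}(\Spec(\cM(S)),\G_m)$, so the plan is to show that the latter group is torsion.

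To do so, I would use the structure of the ring $\cM(S)$. Since $S$ is locally factorial, its local rings are domains, hence $S$ is reduced and locally irreducible. Its connected components $S_i$ are then irreducible, and
$$\cM(S)=\prod_i\cM(S_i),$$
with each $\cM(S_i)$ a field. Étale cohomology commutes with finite products of rings. For an infinite product one has to be more careful, since $\Spec$ of an infinite product of fields is not a disjoint union. The way around this is to observe that $\cM(S)$ is a von Neumann regular (absolutely flat) ring. Indeed, it is reduced, and every element is of the form unit times idempotent: on each component, a meromorphic function is either zero or invertible. So all local rings of $\Spec(\cM(S))$ are fields, and I would show that $H^2_{\et}$ is torsion for $\Spec$ of any absolutely flat ring $K$.

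The key input is that for an absolutely flat ring $K$, one has $H^1_{\et}(\Spec K,\G_m)=\Pic(K)=0$. Moreover, the strict henselizations are separably closed fields, so $\G_m$ is étale-locally $n$-divisible in a way that can be controlled by Galois cohomology. The concrete argument I would try is the following. Write $K$ as a filtered colimit of its finitely generated $\Z$-subalgebras $K_\lambda$ contained in reduced absolutely flat subrings, and pass to the limit using \cite[Theorem~\href{https://stacks.math.columbia.edu/tag/09YQ}{09YQ}]{SP}. This reduces the problem to noetherian absolutely flat rings, which are finite products of fields, and there $H^2_{\et}(\Spec F,\G_m)=\Br(F)$ is torsion, being Galois cohomology.

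The main obstacle is arranging such a filtered system of noetherian absolutely flat subrings. A simpler alternative I would attempt first is to represent a class $\beta\in H^2_{\et}(\Spec K,\G_m)$ by a Čech cocycle on a finite étale cover. By \cite[Lemma~\href{https://stacks.math.columbia.edu/tag/09YQ}{09YQ}]{SP}, $\beta$ comes from $\Spec$ of a finitely presented $K$-algebra. Alternatively, one can check torsion stalkwise via the sequence
$$0\to H^2_{\et}(\Spec K,\G_m)\to\prod_{\kp}H^2_{\et}(\Spec\kappa(\kp),\G_m),$$
where $\kp$ runs over the points of $\Spec K$, but this only gives torsion pointwise and not with a uniform order. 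The cleanest route is therefore probably a direct Galois-theoretic description. For absolutely flat $K$, the étale site is equivalent to that of the profinite space $\Spec K$ with a sheaf of profinite groups, and this should identify $H^2_{\et}$ with a colimit of Galois $H^2$ of finite products of fields, which is torsion. I expect this step to require the most care; once it is done, Corollary \ref{th3+} follows.
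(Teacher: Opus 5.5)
Your reduction is exactly the paper's. By Gabber's theorem the image of \eqref{BrtoH2} is the torsion subgroup, and Theorem~\ref{th4+} embeds $H^2_{\et}(\Spec(\cO(S)),\G_m)$ into $H^2_{\et}(\Spec(\cM(S)),\G_m)$. The paper then declares this target torsion in one phrase, calling it a ``Galois cohomology group''.

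You are right that this deserves a word when $S$ has infinitely many connected components, but your proposal never supplies it. Your first route cannot work. An infinite product such as $\cM(S)=\prod_i\cM(S_i)$ is in general not a filtered union of subrings that are finite products of fields. For example, take an element whose coordinates are pairwise distinct algebraic constants $z_i\in\C$. It lies in no subring $L_1\times\dots\times L_k$: on the block corresponding to $L_j$, its coordinates would all be roots of the minimal polynomial of a single algebraic element of $L_j$, so only finitely many values could occur. Your last route is only a heuristic (``this should identify\dots''). So the proposal stops exactly at the step you single out as delicate.

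The stalkwise route you discard is the one that works. Your objection that it gives no uniform order is answered by quasi-compactness.

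Let $K$ be absolutely flat and $\beta\in H^2_{\et}(\Spec(K),\G_m)$. For a prime $\kp$, one has $K_{\kp}=\kappa(\kp)=\colim_{f\notin\kp}K_f$. By \cite[Theorem~\href{https://stacks.math.columbia.edu/tag/09YQ}{09YQ}]{SP} and the torsion of $\Br(\kappa(\kp))$, there exist $n_{\kp}\geq 1$ and $f_{\kp}\notin\kp$ with $n_{\kp}\beta|_{\Spec(K_{f_{\kp}})}=0$.

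Since $f_{\kp}K=e_{\kp}K$ for an idempotent $e_{\kp}$, the open set $D(f_{\kp})$ is clopen. Finitely many of these cover the quasi-compact space $\Spec(K)$. Refine them to a partition $\Spec(K)=V_1\sqcup\dots\sqcup V_m$ into clopen subsets. If $N$ is the lcm of the finitely many $n_{\kp}$ involved, then $N\beta|_{V_l}=0$ for all $l$. Hence $N\beta=0$, because $H^2_{\et}(\Spec(K),\G_m)=\prod_l H^2_{\et}(V_l,\G_m)$.

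Equivalently: since $\Spec(K)$ is profinite, the Leray spectral sequence of $\Spec(K)_{\et}\to\Spec(K)_{\mathrm{Zar}}$ identifies $H^2_{\et}(\Spec(K),\G_m)$ with the global sections of a sheaf with torsion stalks on a quasi-compact space.

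With this step inserted, your argument coincides with the paper's.
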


\begin{proof}
By Gabber's theorem \cite[Chap.\,II, Theorem 1]{Gabber}, the morphism \eqref{BrtoH2} identifies $\Br(\cO(S))$ with the torsion subgroup of $H^2_{\et}(\Spec(\cO(S)),\G_m)$. In view of Theorem~\ref{th4+}, the group $H^2_{\et}(\Spec(\cO(S)),\G_m)$ injects into the Galois cohomology group~$H^2_{\et}(\Spec(\cM(S)),\G_m)$, and hence is torsion. This proves the corollary.
\end{proof}

\begin{cor}
\label{corinjOM}
The natural morphism
\begin{equation*}
\Br(\cO(S))\to \Br(\cM(S))
\end{equation*}
is injective for any finite-dimensional locally factorial Stein space. 
\end{cor}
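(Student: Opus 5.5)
The plan is to compose the injection \eqref{BrH2} with the comparison map of Theorem~\ref{th4+}, using that the Brauer group is compatible with these cohomological injections. Write $X:=\Spec(\cO(S))$ and $K:=\cM(S)$. The ring morphism $\cO(S)\to\cM(S)$ induces a morphism $\Spec(K)\to X$. Pulling back Azumaya algebras along it, and pulling back \'etale cohomology classes along it, gives a commutative square
\[
\begin{array}{ccc}
\Br(\cO(S)) & \longrightarrow & H^2_{\et}(X,\G_m)\\
\downarrow & & \downarrow\\
\Br(\cM(S)) & \longrightarrow & H^2_{\et}(\Spec(K),\G_m).
\end{array}
\]
The horizontal arrows are Grothendieck's maps \eqref{BrH2}. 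Commutativity holds because \eqref{BrH2} is functorial: it is defined by the boundary maps of $1\to\G_m\to\GL_n\to\PGL_n\to1$, and these commute with pullback of torsors.

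Next I check that the two paths from the top-left corner are injective. The top horizontal arrow is injective by \eqref{BrH2}; by Corollary~\ref{th3+} it is even an isomorphism, though injectivity is all that is needed. The right vertical arrow is injective by Theorem~\ref{th4+}, since $S$ is assumed finite-dimensional and locally factorial. Hence the composite $\Br(\cO(S))\to H^2_{\et}(\Spec(K),\G_m)$ is injective.

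By commutativity, this composite also factors through the left vertical arrow $\Br(\cO(S))\to\Br(\cM(S))$. A map whose composite with another map is injective must itself be injective, so $\Br(\cO(S))\to\Br(\cM(S))$ is injective.

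The only point needing care is the compatibility of \eqref{BrH2} with base change along $\cO(S)\to\cM(S)$, and this is standard. Note that $\cM(S)$ need not be a field, since $S$ may be disconnected. Neither \eqref{BrH2} nor Theorem~\ref{th4+} requires it to be one, so nothing further is needed; in particular there is no obstacle beyond invoking Theorem~\ref{th4+}.
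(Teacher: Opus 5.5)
Your proposal is correct and is essentially the paper's argument: the Brauer group of $\cO(S)$ injects into $H^2_{\et}(\Spec(\cO(S)),\G_m)$, and Theorem~\ref{th4+} gives injectivity into $H^2_{\et}(\Spec(\cM(S)),\G_m)$. Compatibility of \eqref{BrH2} with pullback along $\cO(S)\to\cM(S)$ then forces $\Br(\cO(S))\to\Br(\cM(S))$ to be injective. You are also right that only the top horizontal injection and the right vertical injection are actually needed.
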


\begin{proof}
As the groups $\Br(\cO(S))$ and $\Br(\cM(S))$ inject into $H^2_{\et}(\Spec(\cO(S)),\G_m)$ and $H^2_{\et}(\Spec(\cM(S)),\G_m)$ respectively, the corollary follows from Theorem \ref{th4+}.
\end{proof}

\begin{cor}

\label{th7++}
There exists a connected Stein manifold such that the natural morphism $H^2_{\et}(\Spec(\cO(S)),\G_m)\to H^2(S,\cO_S^{\times})$ is not surjective.
\end{cor}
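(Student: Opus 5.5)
The plan is to find a connected Stein manifold $S$ for which $H^2(S,\cO_S^{\times})$ contains a nontorsion element. Corollary~\ref{th3+} then settles the question. Indeed, since $S$ is a manifold it is locally factorial, and it is finite-dimensional. So Corollary~\ref{th3+} together with Theorem~\ref{th4+} shows that $H^2_{\et}(\Spec(\cO(S)),\G_m)\simeq\Br(\cO(S))$ is a torsion group. The image of a torsion group in $H^2(S,\cO_S^{\times})$ is torsion, so the natural morphism cannot be surjective once the target has a nontorsion element.

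To compute the target, I will use the exponential exact sequence $0\to\Z\to\cO_S\to\cO_S^{\times}\to 0$. Because $S$ is Stein, $H^2(S,\cO_S)=H^3(S,\cO_S)=0$, and hence $H^2(S,\cO_S^{\times})\simeq H^3(S,\Z)$, exactly as in the proof of Corollary~\ref{corvanish2}. It therefore suffices to produce a connected Stein manifold $S$ with $H^3(S,\Z)$ not torsion.

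For the construction I will use Theorem~\ref{lemStein}. Take $T=S^3$, a finite connected CW complex of dimension $3$. Theorem~\ref{lemStein} gives a Stein manifold of dimension $3$ homotopy equivalent to $S^3$, and it is connected since $S^3$ is. Then $H^3(S,\Z)\simeq\Z$, which is nontorsion. A more concrete alternative is $S=\{x_1^2+x_2^2+x_3^2+x_4^2=1\}\subset\C^4$. This is a closed submanifold of $\C^4$, hence Stein, and it deformation retracts onto the real sphere $S^3$. Either choice works, and I will use the affine quadric to avoid invoking Eliashberg--Gompf.

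The main obstacle is only to make sure the torsion statement applies: Corollary~\ref{th3+} needs finite dimension and local factoriality, and both hold for a manifold. The rest is bookkeeping with the exponential sequence.
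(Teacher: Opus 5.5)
Your proposal is correct and matches the paper's proof. Theorem~\ref{th4+} (via Corollary~\ref{th3+}) makes $H^2_{\et}(\Spec(\cO(S)),\G_m)$ torsion, the exponential sequence gives $H^2(S,\cO_S^{\times})\simeq H^3(S,\Z)$, and one takes a Stein manifold homotopy equivalent to $\bS^3$, such as the affine quadric $\{\sum_{i=1}^4 z_i^2=1\}\subset\C^4$, which is also among the paper's suggested choices.
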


\begin{proof}
Let $S$ be a Stein manifold with the homotopy type of $\bS^3$ (apply \cite[Theorem 4.1]{Mihalache} or Theorem \ref{lemStein}, or choose $S:=\{\sum_{i=1}^4z_i^2=1\}\subset\C^4$). The group~$H^2_{\et}(\Spec(\cO(S)),\G_m)$ is torsion by Theorem \ref{th4+}. The exponential exact sequence ${0\to\Z\xrightarrow{2\pi i}\cO_S\xrightarrow{\exp}\cO_S^{\times}\to 0}$ and the vanishing of $H^2(S,\cO_S)$ and~$H^3(S,\cO_S)$ (as $S$ is Stein) show that $H^2(S,\cO_S^{\times})\isoto H^3(S,\Z)\simeq\Z$ is not torsion. The morphism $H^2_{\et}(\Spec(\cO(S)),\G_m)\to H^2(S,\cO_S^{\times})$ can therefore not be surjective.
\end{proof}

\subsection{Purity for Stein manifolds}
Let $S$ be a normal Stein space and let~${D\subset S}$ be an irreducible analytic subset of codimension $1$ in $S$. Let $T\subset S$ be the connected component of $S$ containing $D$. Consider the discrete valuation on the field $\cM(T)$ given by the order of vanishing along $D$. Its valuation ring, the subring $\cO(S)_D$ of $\cM(T)$ of those meromorphic functions whose polar set (in the sense of \cite[Chap.~6, \S 3.2]{GRCoherent}) does not contain~$D$, has residue field $\cM(D)$. We denote by 
$$\res_D:\Br(\cM(S))\to H^1_{\et}(\cM(D),\Q/\Z)$$
the composition of the restriction morphism $\Br(\cM(S))\to\Br(\cM(T))$ and of the residue map
$\Br(\cM(T))\to H^1_{\et}(\cM(D),\Q/\Z)$ of \cite[Definition 1.4.11 (ii)]{CTS}.

\begin{thm}
\label{th5+}
Let $S$ be a finite-dimensional Stein manifold. 
The exact sequence
\begin{equation*}
0\to \Br(\cO(S))\to\Br(\cM(S))\xrightarrow{\res_D}\prod_D H^1_{\et}(\Spec(\cM(D)),\Q/\Z),
\end{equation*}
where $D$ runs over all irreducible analytic subsets of codimension $1$ of $S$, is exact.
\end{thm}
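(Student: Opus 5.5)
Injectivity of $\Br(\cO(S))\to\Br(\cM(S))$ is Corollary \ref{corinjOM}, since a manifold is locally factorial. The composite $\Br(\cO(S))\to\prod_D H^1$ vanishes because a class coming from $\cO(S)$ comes from the discrete valuation ring $\cO(S)_D\supset\cO(S)$, and the residue kills $\Br(\cO(S)_D)$ by \cite[Definition 1.4.11]{CTS}. So the real content is exactness in the middle: given $\beta\in\Br(\cM(S))$ with $\res_D(\beta)=0$ for all $D$, we must produce a class in $\Br(\cO(S))$ mapping to $\beta$. We may assume $S$ connected, or argue componentwise, since $\cM(S)$ is the product of the $\cM$ of the components.

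\textbf{Step 1: spread out.} Write $\beta$ as the class of a central simple algebra over $\cM(S)$, i.e.\ a torsion class in $H^2$ of the field. By a limit argument, $\beta$ comes from $\beta_a\in\Br(\cO(S)[\frac1a])$ for some nonzero $a\in\cO(S)$. Let $Z=\{a=0\}$, a hypersurface whose irreducible components $D_j$ form a locally finite family.

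\textbf{Step 2: pass to topology.} Let $n$ be the period. By Theorem \ref{cor1+} (or Theorem \ref{th6+}) applied to the Stein manifold $S\setminus Z$, whose ring is not $\cO(S)[\frac1a]$ in general, we compare $\Br(\cO(S)[\frac1a])$ with $\Br(\cO(S\setminus Z))\cong H^3(S\setminus Z,\Z)_{\tors}$. Then $\beta_a$ gives a class $\gamma\in H^2(S\setminus Z,\Z/n)$ lifting it, and the question is whether $\gamma$ extends across $Z$. Along the smooth locus of each $D_j$, the Gysin/purity sequence
\[
H^2(S,\Z/n)\to H^2(S\setminus Z,\Z/n)\to H^1(Z^{\mathrm{reg}},\Z/n)
\]
identifies the obstruction with a topological residue, which should be compatible with the algebraic residue $\res_{D_j}(\beta)$ restricted to $H^1$ of a neighbourhood. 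The vanishing of $\res_{D_j}(\beta)\in H^1_{\et}(\cM(D_j),\Q/\Z)$ only says the residue torsor becomes trivial over the function field of $D_j$, i.e.\ off a further hypersurface in $D_j$.

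\textbf{Step 3: effacement.} Here we use the effacement lemma \cite[Lemma 6.4]{Steinsurface} for degree $1$ classes of étale $\cO(S)$-algebras. A residue class that dies in $\cM(D)$ is killed after shrinking, and we modify $\gamma$ by classes supported in codimension $\ge 2$. Excising a codimension $\geq2$ analytic subset $W$ does not change $H^3(-,\Z)$ torsion issues in low degrees: real codimension $\ge4$ gives $H^3(S,\Z)\cong H^3(S\setminus W,\Z)$. So the residue vanishing up to codimension-$2$ loci suffices to extend the topological class to $H^3(S,\Z)_{\tors}$, which comes from some $\alpha\in\Br(\cO(S))$ by Theorem \ref{cor1+}.

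\textbf{Step 4: conclude.} It remains to see that $\alpha$ maps to $\beta$. The difference $\alpha_{\cM(S)}-\beta$ comes from $\Br(\cO(S)[\frac1{a'}])$ for some $a'$, with trivial topological class on $S\setminus\{a'=0\}$. We want to deduce that it vanishes in $\Br(\cM(S))$, by injectivity-type arguments as in Theorem \ref{th4+}, applied on the Stein manifold $S\setminus\{a'=0\}$ combined with Theorem \ref{cor1+} there.

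\textbf{Main obstacle.} The main obstacle is Step 3: matching the algebraic residue over the non-noetherian rings $\cO(S)_D$ and $\cM(D)$ with the topological Gysin residue, and handling infinitely many components $D_j$ (hence the product). I expect to need the effacement lemma to replace each local residue torsor by a global étale $\cO(S)$-algebra. The plan is to first try to do everything on a single exhausting Stein neighbourhood and then pass to the limit using finite-dimensionality.
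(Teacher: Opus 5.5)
Your plan has a genuine gap in Step~4, and it comes with a misplacement of the effacement lemma.

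\textbf{The gap in Step~4.} The tools you cite there (Theorems~\ref{cor1+} and~\ref{th4+} on $S\setminus Z'$) only detect the image of the difference class in $\Br(\cO(S\setminus Z'))$. Vanishing there does not imply vanishing in $\Br(\cM(S))$, because $\Br(\cM(S))\to\Br(\cM(S\setminus Z'))$ is not injective. Take $S=\C^2$ and $Z'=\{xy=0\}$. The quaternion class $(x,y)$ is defined over $\cO(\C^2)[\frac{1}{xy}]$ and maps to $0$ in $\Br(\cO((\C^*)^2))\cong H^3((\C^*)^2,\Z)_{\tors}=0$. Yet it is nonzero in $\Br(\cM(\C^2))$, since its residue along $\{x=0\}$ is the class of $y$, which is not a square in $\cM(\C)$.

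The condition that actually works is the vanishing of the $\Z/n$-class $\gamma\in H^2(S\setminus Z,\Z/n)$ obtained from an \'etale lift $\beta_a\in H^2_{\et}(U,\Z/n)$, $U=\Spec(\cO(S)[\frac1a])$. This class is nonzero in the example. Reaching it needs a step you do not have:
\begin{enumerate}
\item Once $\gamma$ extends to $\tilde\gamma\in H^2(S,\Z/n)$, subtract the element of $\Br(\cO(S))$ corresponding to the Bockstein of $\tilde\gamma$ in $H^3(S,\Z)[n]$.
\item Then $\tilde\gamma$ is the reduction of some $c_1(\cL)$. Replacing $a$ by $a\sigma\tau$, with $\sigma\in H^0(S,\cL)$ and $\tau\in H^0(S,\cL^{-1})$ having nowhere dense zero loci, kills $c_1(\cL)|_{S\setminus Z}$, hence kills $\gamma$.
\item Only now does the effacement lemma enter. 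The Leray spectral sequence of $\varepsilon_U\colon(S\setminus Z)_{\cl}\to U_{\et}$, together with $\Z/n\isoto(\varepsilon_U)_*\Z/n$ \cite[Corollary 3.12]{Steinsurface}, shows that $\beta_a$ comes from $H^0_{\et}(U,\RR^1(\varepsilon_U)_*\Z/n)$.
\item \cite[Lemma 6.4]{Steinsurface} then kills this class after inverting one more nonzerodivisor. So the modified class vanishes at the generic point, i.e.\ $\beta$ comes from $\Br(\cO(S))$.
\end{enumerate}

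\textbf{Repairs to Steps~2 and~3.} Work with $\Z/n$ \'etale coefficients from the start. Since $\cM(S)$ is absolutely flat, $\Pic=0$, and Kummer gives $\Br(\cM(S))[n]\cong H^2_{\et}(\Spec(\cM(S)),\Z/n)$. Spread out to $H^2_{\et}(U,\Z/n)$ and use the comparison morphism to $H^2(U^{\an},\Z/n)$ directly, with no detour through $\cO(S\setminus Z)$.

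Your Step~2 ``lift'' of a class in $H^3(S\setminus Z,\Z)_{\tors}$ to $H^2(S\setminus Z,\Z/n)$ is only defined up to $H^2(S\setminus Z,\Z)/n$. That group contains classes with nonzero Gysin residues, for instance the generator of $H^2((\C^*)^2,\Z)$ in the example above. So residue vanishing can only be expected for classes coming from \'etale cohomology.

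The residue step needs no effacement:
\begin{enumerate}
\item Vanishing of $\res_{D_i}$ lets you lift the class over the discrete valuation ring $\cO(S)_{D_i}$, via compatibility with the \'etale residue.
\item A limit argument then lifts it to $\cO(S)[\frac1{a_i}]$ with $a_i$ not identically zero on $D_i$, agreeing with $\beta_a$ after inverting $a_ia$.
\item This shows the Gysin residue of $\gamma$ dies on $D_i^0\setminus\{a_i=0\}$, hence on $D_i^0$, since removing a nowhere dense analytic subset of a complex manifold is injective on $H^1$.
\end{enumerate}

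Finally, extend across $Z^{\sing}$ in $H^2(-,\Z/n)$, where removing complex codimension $\geq 2$ induces an isomorphism. Your assertion $H^3(S,\Z)\cong H^3(S\setminus W,\Z)$ is false in general: take $W$ a point of $\C^2$. The whole argument is global. Infinitely many components are harmless because $H^1(Z^0,\Z/n)=\prod_i H^1(D_i^0,\Z/n)$, and no exhaustion is needed.
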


\begin{proof}
The injectivity of $\Br(\cO(S))\to\Br(\cM(S))$ follows from Corollary \ref{corinjOM}.

Fix~${\alpha\in\Br(\cM(S))}$ in the kernel of all the $\res_D$. Let $n$ be the order of~$\alpha$ in the torsion group $\Br(\cM(S))$. As $\cM(S)$ is a product of (possibly countably many) fields, it is an absolutely flat ring by \cite[Lemma~\href{https://stacks.math.columbia.edu/tag/092G}{092G}]{SP}. It follows that all quasi-compact open subsets of $\Spec(\cM(S))$ are closed (see  \cite[Lemma~\href{https://stacks.math.columbia.edu/tag/092F}{092F}\,(2)$\Rightarrow$(3) and Lemma~\href{https://stacks.math.columbia.edu/tag/04MG}{04MG}\,(7)$\Rightarrow$(4)]{SP}), hence that $\Pic(\Spec(\cM(S)))=0$. The Kummer exact sequence $0\to \Z/n\xrightarrow{1\mapsto\exp(\frac{2\pi i}{n})}\G_m\xrightarrow{n}\G_m\to 0$ and Hilbert 90 (see \cite[Exp.\,IX, Th\'eor\`eme 3.3]{SGA43}) therefore yield an isomorphism 
\begin{equation}
\label{Kummerproduct}
\Br(\cM(S))[n]\simeq H^2_{\et}(\Spec(\cM(S)),\Z/n).
\end{equation}

 Use \eqref{Kummerproduct} to view $\alpha$ as an element of $H^2_{\et}(\Spec(\cM(S)),\Z/n)$. Applying \cite[Theorem~\href{https://stacks.math.columbia.edu/tag/09YQ}{09YQ}]{SP} (with $\cF_i=\G_m$) shows the existence of a nonzerodivisor $a\in\cO(S)$ and, setting~$U:=\Spec(\cO(S)[\frac{1}{a}])$, of a class~$\alpha'\in H^2_{\et}(U,\Z/n)$ with ${\alpha'|_{\Spec(\cM(S))}=\alpha}$. Define~$Z:=\{a=0\}\subset S$, so that $U^{\an}=S\setminus Z$. Let ${\beta\in H^2(S\setminus Z,\Z/n)}$ be the image of $\alpha'$ by the comparison morphism 
$$H^2_{\et}(U,\Z/n)\to H^2(S\setminus Z,\Z/n)$$
(see \S\ref{paranalytification}). Let $(D_i)_{i\in I}$ be the irreducible components of $Z$. Let $Z^{\sing}$ be the singular locus of $Z$, set $Z^0:=Z\setminus Z^{\sing}$ and $D_i^0:=D_i\cap Z^0$. Let $\gamma=(\gamma_i)_{i\in I}$ be the element of~$H^1(Z^0,\Z/n)=\prod_{i\in I} H^1(D_i^0,\Z/n)$ image of~$\beta$ in the Gysin long exact sequence
\begin{equation}
\label{Gysin1}
H^2(S\setminus Z^{\sing},\Z/n)\to H^2(S\setminus Z,\Z/n)\to H^1(Z^0,\Z/n).
\end{equation}

\begin{lem}
\label{comparisonresidues}
Fix $i\in I$. The class $\gamma_i\in H^1(D_i^0,\Z/n)$ vanishes.
\end{lem}

\begin{proof}
Let $T_i$ be the connected component of $S$ containing $D_i$. By compatibility of $\res_{D_i}$ with the residue map in \'etale cohomology (see \cite[Theorems~1.4.14 and 2.3.5]{CTS}) and since $\res_{D_i}(\alpha|_{\Spec(\cM(T_i))})=0$, the class $\alpha|_{\Spec(\cM(T_i))}$ lifts to $H^2_{\et}(\Spec(\cO(S)_{D_i}),\Z/n)$. A limit argument based on \cite[Theorem~\href{https://stacks.math.columbia.edu/tag/09YQ}{09YQ}]{SP} shows  that it further lifts to a class~$\alpha'_i\in H^2_{\et}(\Spec(\cO(T_i)[\frac{1}{a_i}]),\Z/n)$ for some nonzerodivisor $a_i\in\cO(T_i)$ that does not vanish identically on~$D_i$. Another application of~\cite[Theorem~\href{https://stacks.math.columbia.edu/tag/09YQ}{09YQ}]{SP} shows that, after possibly changing $a_i$, we may assume that the restrictions of $\alpha'$ and $\alpha'_i$ to $H^2_{\et}(\Spec(\cO(T_i)[\frac{1}{a_ia}]),\Z/n)$ coincide.

Define $Z_i:=\{a_i=0\}\subset T_i$ and let $\beta_i\in H^2(T_i\setminus Z_i,\Z/n)$ be the image of~$\alpha'_i$ by the comparison morphism $H^2_{\et}(\Spec(\cO(T_i)[\frac{1}{a_i}]),\Z/n)\to H^2(T_i\setminus Z_i,\Z/n)$ of \S\ref{paranalytification}. Our choice of $a_i$ implies that the restrictions of $\beta$ and $\beta_i$ to $H^2(T_i\setminus (Z\cup Z_i),\Z/n)$ coincide. Set $W_i:=((Z\cap T_i)\setminus D_i^0)\cup Z_i$. The Gysin long exact sequence 
$$H^2(T_i\setminus W_i,\Z/n)\to H^2(T_i\setminus ((Z\cap T_i)\cup Z_i),\Z/n)\to H^1(D_i^0\setminus (Z_i\cap D_i^0),\Z/n)$$
and its compatibility with \eqref{Gysin1} therefore show that $\gamma_i|_{D_i^0\setminus (Z_i\cap D_i^0)}$ vanishes. As the restriction map $H^1(D_i^0,\Z/n)\to H^1(D_i^0\setminus (Z_i\cap D_i^0),\Z/n)$ is injective (because~$Z_i\cap D_i^0$ is a nowhere dense analytic subset of the complex manifold $D_i^0$), one has $\gamma_i=0$.
\end{proof}

We resume the proof of Theorem \ref{th5+}. Lemma \ref{comparisonresidues} shows that $\gamma=0$, hence that~$\beta\in H^2(S\setminus Z,\Z/n)$ lifts to a cohomology class~$\beta'\in H^2(S\setminus Z^{\sing},\Z/n)$. As~$S$ is a manifold and $Z^{\sing}$ has codimension $\geq 2$ in $S$, the class $\beta'$ further lifts to a class $\beta''\in H^2(S,\Z/n)$ (stratifying $Z^{\sing}$ by its singular locus, the singular locus of its singular locus, etc., one reduces to the case where  $Z^{\sing}$ is smooth in which case this follows from a Gysin long exact sequence).

Let $\partial:H^2(S,\Z/n)\to H^3(S,\Z)[n]$ be the connecting morphism associated to the short exact sequence~${0\to\Z\xrightarrow{n}\Z\to\Z/n\to 0}$. By Theorem \ref{cor1+}, the natural morphism $\Br(\cO(S))\to H^3(S,\Z)_{\tors}$ is an isomorphism. After subtracting from~$\alpha$ the element of $\Br(\cO(S))$ corresponding to $\partial(\beta'')\in H^3(S,\Z)[n]$ and modifying~$\alpha'$,~$\beta$,~$\beta'$ and~$\beta''$ accordingly, we may assume that $\partial(\beta'')=0$. This shows that~$\beta''$ is the reduction modulo $n$ of a class $\delta\in H^2(S,\Z)$.

The exponential exact sequence $0\to \Z\xrightarrow{2\pi i}\cO_S\xrightarrow{\exp}\cO_S^*\to 0$ shows that $\delta$ is the first Chern class of a holomorphic line bundle $\cL$ on $S$. As $S$ is Stein, one can choose~$\sigma\in H^0(S,\cL)$ and $\tau\in H^0(S,\cL^{-1})$ whose zero loci are nowhere dense in $S$. Replacing the nonzerodivisor $a\in\cO(S)$ with $a\sigma\tau\in\cO(S)$, one can ensure that~$\delta|_{S\setminus Z}\in H^2(S\setminus Z,\Z)$ vanishes, hence that $\beta=0$.

At this point, consider the Leray spectral sequence 
\begin{equation}
\label{Leray}
E_2^{p,q}=H^p_{\et}(U,\RR^q(\varepsilon_U)_*\Z/n)\Rightarrow H^{p+q}(S\setminus Z,\Z/n)
\end{equation}
of the morphism of sites $\varepsilon_{U}:(S\setminus Z)_{\cl}=(U^{\an})_{\cl}\to U_{\et}$ (see \S\ref{paranalytification}). Taking into account that the natural morphism $\Z/n\to(\varepsilon_U)_*\Z/n$ is an isomorphism (see \cite[Corollary 3.12]{Steinsurface}), the spectral sequence \eqref{Leray} induces an exact sequence
\begin{equation}
\label{Lerayexact}
H^0_{\et}(U,\RR^1(\varepsilon_U)_*\Z/n)\to H^2_{\et}(U,\Z/n)\to H^2(S\setminus Z,\Z/n).
\end{equation}
As $\beta\in H^2(S\setminus Z,\Z/n)$ vanishes, the class $\alpha'\in H^2_{\et}(U,\Z/n)$ lifts in \eqref{Lerayexact} to a class $\alpha''\in H^0_{\et}(U,\RR^1(\varepsilon_U)_*\Z/n)$. There exists a nonzerodivisor~$b\in\cO(S)$ such that~$\alpha''|_{\Spec(\cO(S)[\frac{1}{ab}])}$ vanishes (unravelling the definition of a higher direct image, this claim follows from \cite[Lemma 6.4]{Steinsurface}). After replacing $a$ with $ab$, we may therefore assume that $\alpha''=0$, hence that $\alpha'=0$, hence a fortiori that $\alpha=0$. The theorem is proved.
\end{proof}

\section{The index for Brauer classes of Stein algebras}

Let $(X,\cO_X)$ be a locally 
%Important, see [Grothendieck, Groupe de Brauer I, Remarque 5.12].
ringed site and fix $\alpha\in \Br(X,\cO_X)$. The index $\ind(\alpha)$ of $\alpha$ is the gcd of the degrees of the Azamuya algebras representing $\alpha$. In view of the first paragraph of the proof of \cite[Proposition~3.1.2.1]{Lieblich}, the index $\ind(\alpha)$ can equivalently be defined as the gcd of the ranks of all locally free $\alpha$-twisted sheaves (in the sense of \S\ref{partwisted}, where one identifies~$\alpha$ and its image in $H^2(X,\cO_X^{\times})$ by \eqref{injBrH2}).

\subsection{Algebraic index versus topological index}
\label{secindex}

\begin{thm}
\label{th10+}
Let $S$ be a finite-dimensional Stein space. For~$\alpha\in \Br(\cO(S))$ with images $\alpha^{\an}\in \Br_{\an}(S)$ and $\alpha^{\topo}\in\Br_{\topo}(S)$, one has $\ind(\alpha)=\ind(\alpha^{\an})=\ind(\alpha^{\topo})$.
\end{thm}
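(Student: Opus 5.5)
The index is a gcd of degrees of representing Azumaya algebras, so I will compare the three sets of such degrees. The plan is to show that the set $\{\deg\cA\}$ of degrees of Azumaya algebras representing the class is the same in all three settings. Equality of the gcds then follows at once.

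\emph{Step 1: $\ind(\alpha)=\ind(\alpha^{\an})$.} Take an Azumaya algebra $\cA$ of degree $n$ on $\Spec(\cO(S))$ representing $\alpha$. Its analytification $\cA^{\an}$ is a holomorphic Azumaya algebra of degree $n$ representing $\alpha^{\an}$, so $\ind(\alpha^{\an})\mid\ind(\alpha)$. Conversely, let $\cB$ be a holomorphic Azumaya algebra of degree $n$ with $[\cB]=\alpha^{\an}$. The proof of Proposition \ref{propcompa2} shows that the equivalence \eqref{monoidaleq} restricts, for finite-dimensional $S$, to an equivalence between locally free sheaves of rank $r$ on both sides. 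It also sends Azumaya algebras to Azumaya algebras, via the criterion $\cA\otimes\cA^{\op}\isoto\cEnd(\cA)$. Hence there is an Azumaya algebra $\cA'$ on $\Spec(\cO(S))$ of rank $n^2$, so of degree $n$, with $\cA'^{\an}\simeq\cB$. Since $\Br(\cO(S))\to\Br_{\an}(S)$ is injective by Proposition \ref{propcompa2}, we get $[\cA']=\alpha$. Thus $\ind(\alpha)\mid\ind(\alpha^{\an})$.

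\emph{Step 2: $\ind(\alpha^{\an})=\ind(\alpha^{\topo})$.} Here I use Grauert's Oka principle as in Proposition \ref{propcompa1}. The map $H^1(S,\PGL_n(\cO_S))\to H^1(S,\PGL_n(\cC_S))$ is bijective for every $n$. So every topological Azumaya algebra of degree $n$ is isomorphic to the topologization of a holomorphic Azumaya algebra of degree $n$. Its Brauer class is then the unique preimage of $\alpha^{\topo}$, namely $\alpha^{\an}$, by the injectivity in Proposition \ref{propcompa1}. Conversely, holomorphic representatives topologize to topological representatives of the same degree. So the two sets of degrees coincide.

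The only point needing care is Step 1. Both the rank-preservation of \eqref{monoidaleq} and its compatibility with the Azumaya criterion (Remark 2.6 of \cite{Steinsurface}) depend on finite-dimensionality, and they are exactly what was used in Proposition \ref{propcompa2}. Once those are granted, the argument is formal.
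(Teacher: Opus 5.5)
Your proof is correct and follows the same route as the paper. Analytification and topologization give one divisibility. Lifting a degree-$n$ representative back, via Grauert's Oka principle (Proposition \ref{propcompa1}) and the equivalence \eqref{monoidaleq} (Proposition \ref{propcompa2}), gives the other. The paper merely packages this as a cycle $\ind(\alpha^{\topo})\mid\ind(\alpha^{\an})\mid\ind(\alpha)\mid\ind(\alpha^{\topo})$, and your explicit use of injectivity of the comparison maps, to see that the lifted algebra represents the right class, is a step the paper leaves implicit.
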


\begin{proof}
Associating to an Azumaya algebra $\cA$ on $\Spec(\cO(S))$ the holomorphic Azumaya algebra $\cA^{\an}$ and to a holomorphic Azumaya algebra $\cB$ on $S$ the topological Azumaya algebra $\cB\otimes_{\cO_S}\cC_S$ shows that~$\ind(\alpha^{\an})\mid\ind(\alpha)$ and $\ind(\alpha^{\topo})\mid\ind(\alpha^{\an})$.

Let $\cA$ be a degree $n$ topological Azumaya algebra on $S$. The proof of Proposition~\ref{propcompa1} shows that one can endow $\cA$ with a structure of degree $n$ holomorphic Azumaya algebra on $S$. As $S$ is finite-dimensional, the proof of Proposition \ref{propcompa2} shows that $\cA$ is the analytification of a degree $n$ Azumaya algebra on $\Spec(\cO(S))$. This proves the converse divisibility $\ind(\alpha)\mid\ind(\alpha^{\topo})$. 
\end{proof}

\begin{cor}
\label{cor:per-ind-bound}
    Let $S$ be an $n$-dimensional Stein space. 
    For any $\alpha \in \Br(\cO(S))$, 
    \begin{equation*}
        \ind(\alpha) \mid (\lceil n/2 \rceil - 1)! \ \per(\alpha)^{\lceil n/2 \rceil - 1}
    \end{equation*}
\end{cor}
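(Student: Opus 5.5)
Fix $\alpha\in\Br(\cO(S))$. Theorem~\ref{th10+} gives $\ind(\alpha)=\ind(\alpha^{\topo})$. By Theorem~\ref{cor1+}, $\per(\alpha)=\per(\alpha^{\topo})$, since $\Br(\cO(S))\to\Br_{\topo}(S)$ is an isomorphism and isomorphisms preserve orders. Hence it suffices to prove the purely topological bound
\[
\ind(\alpha^{\topo}) \mid (\lceil n/2\rceil-1)!\,\per(\alpha^{\topo})^{\lceil n/2\rceil-1}.
\]
By Hamm's theorem \cite[Korollar]{Hamm}, $S$ has the homotopy type of a CW complex $T$ of real dimension $\leq n$, which is half the real dimension $2n$. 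Both index and period of topological Brauer classes are homotopy invariant: Azumaya algebras correspond to homotopy classes of maps to $\BPGL_m(\C)$, and by Theorem~\ref{th2+} the Brauer group is $H^3(-,\Z)_{\tors}$. So we may replace $S$ by a CW complex $T$ of dimension $\leq n$.

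The core is the topological period-index bound of Antieau--Williams type. We must show that a class $\alpha\in H^3(T,\Z)$ with $m\alpha=0$ on a $d$-dimensional complex, $d\le n$, has index dividing $(\lceil d/2\rceil-1)!\,m^{\lceil d/2\rceil-1}$. The plan uses obstruction theory. Set $N:=(\lceil n/2\rceil-1)!\,m^{\lceil n/2\rceil-1}$. Write $\alpha=\partial\beta$ with $\beta\in H^2(T,\Z/m)$, classified by a map $T\to K(\Z/m,2)$. We want to lift this map along $\BPGL_N(\C)\to K(\Z/N,2)\to K(\Z/m,2)$, or more precisely along a map realizing $\partial\beta$, up to the $n$-skeleton. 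The plan has three steps.

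\textbf{Step 1.} Treat the low-dimensional cases directly. If $n\le2$, then $H^3(T,\Z)=0$, so $\alpha=0$ and the index is $1$. If $n\le 4$, the exponent is $1$ and we need $\ind\mid m$. Here the map $\BPGL_m(\C)\to K(\Z/m,2)$ is highly connected in low degrees: $\pi_2$ is an isomorphism, $\pi_3=0$, and the first obstruction sits in $H^5(T;\pi_4)$, which vanishes for $d\le4$. So a lift exists.

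\textbf{Step 2.} In general, climb a Postnikov tower of $\BPGL_N(\C)$ in the stable range $2N>n$. Its homotopy groups are $\Z/N$ in degree $2$ and $\Z$ in the even degrees $4,6,\dots$. The successive obstructions lie in $H^{2k+1}(T,\Z)$ for $2\le k<\lceil n/2\rceil$. As in \cite{AW1,AW2}, each such obstruction is torsion, annihilated by $m$ after the appropriate rescaling. Each obstruction can be killed by passing from $N'$ to $kmN'$. The factor $k$ comes from the $k$-invariants involving $k!$, which appear through the Chern character denominators. Iterating over $k=2,\dots,\lceil n/2\rceil-1$ produces the factor $(\lceil n/2\rceil-1)!\,m^{\lceil n/2\rceil-1}$.

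\textbf{Step 3.} Since $T$ is only finite-dimensional, not finite, check that the obstruction-theoretic lifting works cell by cell. It does, since obstruction theory needs no finiteness, and Theorem~\ref{th2+} already handles the identification with $H^3$.

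The main obstacle is Step 2: controlling the obstructions and showing they are killed by the stated multiples. The first thing to try is an alternative to the obstruction theory. One would construct an explicit $\alpha$-twisted vector bundle via twisted $K$-theory, using the Atiyah--Hirzebruch spectral sequence for $KU_\alpha$. Each differential out of the class $1\in H^0$ then has torsion image bounded by $k\,m$, so the rank of a twisted bundle, that is the index, divides the product of these orders. If a cleaner reference exists, such as Gu's or Antieau--Williams' upper bound for topological period-index on $d$-dimensional complexes, I would simply invoke it after the reduction above.
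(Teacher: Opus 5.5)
\textbf{Verdict: there is a gap.} Your reduction to topology matches the paper's: Theorem~\ref{th10+} gives $\ind(\alpha)=\ind(\alpha^{\topo})$, the isomorphism of Theorem~\ref{cor1+} preserves periods, and Hamm's theorem lets you work on a CW complex $T$ of dimension $\leq n$. Your Step~1 also correctly settles $n\leq 4$.

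The gap is Step~2, which carries all the content when $n\geq 5$. Nothing in your sketch shows that the obstruction in $H^{2k+1}(T,\Z)$ at each stage is torsion of order dividing $km$. These obstructions also depend on the lifts chosen at earlier stages. So the claim that each one dies after passing from $N'$ to $kmN'$ is unsupported; the appeal to $k$-invariants and Chern character denominators is a heuristic, not an argument. That bound is precisely the Antieau--Williams theorem. The paper does not reprove it: it invokes the main result of \cite{aw3} with $d=\lceil n/2\rceil$.

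So your fallback of citing that result is the right move, but it does not apply directly ``after the reduction above.'' The Antieau--Williams bound is a statement about \emph{finite} CW complexes, which is why the paper first reduces to that case. Hamm's theorem only gives a finite-dimensional complex, which may have infinitely many cells. Your Step~3 addresses finiteness only for your own obstruction argument, not for the cited theorem.

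The missing step is the trick from the proof of Theorem~\ref{th2+}:
\begin{enumerate}
\item Set $m:=\per(\alpha^{\topo})$ and write $\alpha^{\topo}=\partial(\beta)$ for some $\beta\in H^2(T,\Z/m)$.
\item Choose a model of $K(\Z/m,2)$ with finitely many cells in each dimension, with tautological class $\beta_0$. Let $i\colon K(\Z/m,2)_{\leq n}\hookrightarrow K(\Z/m,2)$ be the inclusion of the $n$-skeleton.
\item By cellular approximation, find $f\colon T\to K(\Z/m,2)_{\leq n}$ with $f^*i^*\beta_0=\beta$.
\item Then $\alpha^{\topo}=f^*\gamma$, where $\gamma:=i^*\partial(\beta_0)$ lives on a finite $n$-dimensional complex and $\per(\gamma)\mid m$.
\item Apply Antieau--Williams to $\gamma$. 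Pulling back Azumaya algebras preserves their degrees, so $\ind(\alpha^{\topo})=\ind(f^*\gamma)\mid\ind(\gamma)$, which gives the required divisibility.
\end{enumerate}
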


\begin{proof}
    By Theorem~\ref{th10+}, it suffices to prove the same bound for $\alpha^{\topo} \in \Br_{\topo}(S)$.
    In fact, the bound in Corollary~\ref{cor:per-ind-bound} holds more generally for a topological Brauer class~$\alpha$ on an $n$-dimensional CW complex $T$ (note that $S$ has the homotopy type of a CW complex of dimension $n$ by \cite[Korollar]{Hamm}).
    Indeed, as in the proof of Theorem~\ref{th2+}, one reduces to the case when $T$ is a finite CW complex of dimension~$n$. One can then apply the main result from \cite{aw3} with $d = \lceil n/2 \rceil$. 
\end{proof}

\begin{rem}
    Antieau and Williams have conjectured in \cite{aw3} that for each $d \geq 1$ and each integer $\ell \geq 2$ prime to $(d - 1)!$, there exists a $2d$-dimensional finite CW complex $T$ and a topological Brauer class $\alpha\in\Br_{\topo}(T)$ of period $\ell$ such that
    \begin{equation*}
        \ind(\alpha) = \per(\alpha)^{d - 1} = \ell^{d - 1}.
    \end{equation*}
    If true, their conjecture would imply (by way of Lemma~\ref{lemStein} and Theorem~\ref{th10+}) that the bound in Corollary~\ref{cor:per-ind-bound} is sharp for Brauer classes of period prime to $(n/2 - 1)!$ on Stein spaces of even dimension $n$.
\end{rem}

\begin{rem}
    Fix an integer $\ell \geq 2$, 
    and let $S$ be a $3$-dimensional Stein manifold with the homotopy type of the Moore space $M(\Z/\ell, 2)$ (using Theorem~\ref{lemStein} and the fact that $M(\Z/\ell, 2)$ is a $3$-dimensional CW complex by construction).
    From Theorem~\ref{th2+} and the universal coefficient theorem, one calculates that $\Br_{\topo}(S) = \Z/\ell$.
    
    This example shows that it is not generally possible to improve in the ceiling~$\lceil n/2 \rceil$ to a floor $\lfloor n/2 \rfloor$ everywhere in Corollary~\ref{cor:per-ind-bound}.
\end{rem}

\subsection{Twisted topological K-theory}
\label{partwistedKth}

The cohomology theory called \textit{twisted topological K-theory} was introduced by Donovan and Karoubi in \cite{DK} and developed by Atiyah and Segal \cite{AS} and others. We collect here the necessary background on twisted topological K-theory used in the proof of Theorem \ref{th9+} in \S\ref{parindOM}.

Let $T$ be a topological space and let $\cA$ be a topological Azumaya algebra of index~$n$ on $T$. 
An $\cA$-\textit{bundle} is a right $\cA$\nobreakdash-module that is locally free of finite rank as a $\cC_T$\nobreakdash-module. We define $K(T,\cA)$ to be the K-theory of the exact category of $\cA$-bundles (see \cite[\S5]{DK}). This definition is not the correct one for infinite-dimensional CW complexes (see \cite{AS}), but it will be sufficient for our purposes.

The rank of an $\cA$-bundle $\cE$ as a $\cC_T$-module is of the form $nr$ for some locally constant function $r:T\to \Z_{\geq 0}$ (reduce to the case~$\cA=M_n(\cC_T)$ where it follows from Morita theory). We call $r$ the $\cA$-\textit{rank} of~$\cE$ (note that $\cA$ has $\cA$-rank $n$). One defines the $\cA$-\textit{rank} of an element of $K(T,\cA)$ by additivity: it is a locally constant function $r:T\to\Z$. If $r$ is constant (\eg if $T$ is connected), we view it as an integer. Let $K(T,\cA)_r\subset K(T,\cA)$ be the subgroup of elements of $\cA$-rank $r$. 

For $0\leq r\leq nN$, we let $\Gr_{(T,\cA)}(r,N)\to T$ be the locally trivial fibration whose 
fiber over $t\in T$ is the space of $\cA_t$-stable subspaces of $\cA_t$-rank $r$ of~$\cA_t^{\oplus N}$.
Since~$\cA$ is locally isomorphic to $M_n(\cC_T)$, it follows from Morita theory that the fibers of~$\Gr_{(T,\cA)}(r,N)\to T$ are homeomorphic to the Grassmannian $\Gr(r,nN)$ of~$r$\nobreakdash-di\-men\-sion\-al subspaces of  $\C^{nN}$. 

Define $\BU_{(T,\cA)}(r):=\varinjlim_N \Gr_{(T,\cA)}(r,N)$ where the colimit is induced by the inclusions $\cA^{\oplus N}\subset \cA^{\oplus N+1}$. Let $\cU_{(T,\cA)}(r)$ be the universal $\cA$-bundle of~$\cA$\nobreakdash-rank~$r$ on~$\BU_{(T,\cA)}(r)$. Set ${\BU_{(T,\cA), r}:=\varinjlim_i \BU_{(T,\cA)}(r+ni)}$ where the colimit is induced by $\cE\mapsto\cE\oplus \cA$. The locally trivial fibrations ${\BU_{(T,\cA)}(r)\to T}$ and ${\BU_{(T,\cA), r}\to T}$ respectively have fibers homeomorphic to the classifying spaces~$\BU(r)$ and~$\BU$.

The following lemma is an analogue in our context of the assertion that $\BU\times\Z$ represents usual topological K-theory (this corresponds to the case $\cA=\cC_T$). As the proof of Lemma \ref{lemtwistedKth} is entirely similar to the proof of this classical assertion (for which see \eg \cite[Chap.\,24, \S1]{Concise}), we omit it.

\begin{lem}
\label{lemtwistedKth}
Let $T$ be a connected compact Hausdorff topological space and let $\cA$ be topological Azumaya algebra of degree $n$ on $T$. Fix $r\in\Z$. There is a bijection
\begin{equation}
 \left\{  \begin{array}{l}
    \textrm{homotopy classes of sections}\\ \hspace{2em}\textrm{of \,} \BU_{(T,\cA), r}\to T
  \end{array}\right\}
    \stackrel[]{}{\isoto} 
K(T,\cA)_r
\end{equation}
associating with $\sigma:T\to \BU_{(T,\cA)}(r+ni)$ the element $[\sigma^*(\cU_{(T,\cA)}(r+ni))]-i[\cA]$.
\end{lem}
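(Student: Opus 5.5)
We follow the classical argument that $\BU\times\Z$ represents $K^0$ on compact Hausdorff spaces, working fiberwise over $T$ with $\cA$-bundles replacing vector bundles. Three inputs are needed, all adapted from the case $\cA=\cC_T$: every $\cA$-bundle is a direct summand of some $\cA^{\oplus N}$; homotopic sections give isomorphic pulled-back bundles; and isomorphic $\cA$-bundles are classified up to stabilization.

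\textbf{Step 1 (complements).} We show that every $\cA$-bundle $\cE$ on $T$ is a direct summand of $\cA^{\oplus N}$ for some $N$. Since $T$ is compact Hausdorff, cover it by finitely many opens $V_j$ over which $\cA\simeq M_n(\cC)$. Over each $V_j$, Morita theory gives $\cE\simeq \cC^{\oplus n}\otimes \cE'_j$, and $\cE|_{V_j}$ is generated by finitely many $\cA$-linear maps $\cA|_{V_j}\to\cE|_{V_j}$. Gluing these with a partition of unity, which is $\cC_T$-linear and hence compatible with the $\cA$-module structure, yields a surjection $\cA^{\oplus N}\to\cE$ of $\cA$-modules. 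To split it, average a $\cC_T$-linear splitting: pick a fiberwise Hermitian metric on $\cA^{\oplus N}$ invariant under the unitary group of $\cA_t$, locally via $M_n(\C)\supset U(n)$ and glued by partitions of unity, and take orthogonal complements. Alternatively, use that $\cA_t$ is semisimple, so any surjection of $\cA$-bundles splits locally, and glue the local splittings with a partition of unity, since convex combinations of splittings are splittings. Consequently every $\cA$-bundle of $\cA$-rank $s$ is $\sigma^*\cU_{(T,\cA)}(s)$ for some section $\sigma$ of $\Gr_{(T,\cA)}(s,N)\to T$.

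\textbf{Step 2 (homotopy invariance and classification).} If two sections $\sigma_0,\sigma_1$ are homotopic through sections, then $\sigma_0^*\cU\simeq\sigma_1^*\cU$. This follows from the usual fact that bundles on $T\times[0,1]$ are pulled back from $T$, which again uses compactness and partitions of unity and applies verbatim to $\cA$-bundles on $T\times[0,1]$. Conversely, if $\sigma_0^*\cU\simeq\sigma_1^*\cU$ as subbundles of $\cA^{\oplus N}$, then after enlarging $N$ (embedding into $\cA^{\oplus 2N}$ via the two factors and rotating) the two sections become homotopic. This is the standard linear homotopy trick, run $\cA$-linearly fiberwise. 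Hence homotopy classes of sections of $\BU_{(T,\cA)}(s)\to T$, with $N\to\infty$, are in bijection with isomorphism classes of $\cA$-bundles of $\cA$-rank $s$. Compactness ensures that any section and any homotopy factor through a finite stage of the colimit.

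\textbf{Step 3 (stabilization).} Pass to the colimit over $i$ via $\cE\mapsto\cE\oplus\cA$. A class in $K(T,\cA)_r$ has the form $[\cE]-[\cF]$. By Step 1, $\cF$ has a complement $\cF'$ with $\cF\oplus\cF'\simeq\cA^{\oplus i}$, so the class equals $[\cE\oplus\cF']-i[\cA]$, which proves surjectivity of the map. For injectivity, $[\cE]-i[\cA]=[\cE']-i'[\cA]$ implies that $\cE\oplus\cA^{\oplus i'}\oplus\cG\simeq\cE'\oplus\cA^{\oplus i}\oplus\cG$ for some $\cG$. Adding a complement of $\cG$ shows that $\cE$ and $\cE'$ become isomorphic after adding copies of $\cA$, so the corresponding sections become homotopic at a later stage. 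Well-definedness of the map is immediate from Step 2.

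The main obstacle is Step 1, specifically producing global $\cA$-linear splittings so that every $\cA$-bundle embeds as a summand of a free $\cA$-module. The plan for it is the averaging or partition-of-unity argument above, checking $\cA$-linearity locally through the Morita equivalence with ordinary vector bundles.
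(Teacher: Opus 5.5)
Your proposal is correct and takes the same route as the paper, which omits the proof and says only that it is entirely similar to the classical argument that $\BU\times\Z$ represents topological K-theory on compact Hausdorff spaces; your three steps carry out exactly that argument fiberwise for $\cA$-bundles. One small caution about Step 1: rely on your second option, gluing local $\cA$-linear splittings with a partition of unity. The invariant-Hermitian-metric option does not work as stated, because $\cA_t$ has no canonical unitary group, and averaging metrics invariant under different local copies of $U(n)$ need not give an invariant metric unless you first reduce the structure group of $\cA$ to $PU(n)$.
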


\subsection{Index over \texorpdfstring{$\cO(S)$}{O(S)} versus index over \texorpdfstring{$\cM(S)$}{M(S)}}
\label{parindOM}

The following proposition is key to the proof of Theorem \ref{th9+}.

\begin{prop}
\label{proptwistedresolution}
Let $S$ be a connected Stein manifold. Let $\cA$ be a holomorphic Azumaya algebra of degree $n$ on $S$ with class $\alpha\in\Br_{\an}(S)$. Let $\cF$ be a coherent right $\cA$\nobreakdash-module on~$S$ that is generically of rank $rn$ (as an $\cO_S$-module). Then $\ind(\alpha)\mid r$.
\end{prop}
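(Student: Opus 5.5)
The plan is to pass to topology and use twisted K-theory (\S\ref{partwistedKth}). By Theorem \ref{th10+} (or rather its proof, which applies to $\Br_{\an}(S)$ and $\Br_{\topo}(S)$), it suffices to show $\ind(\alpha^{\topo})\mid r$. I will do this by producing a topological $\cA$-bundle of $\cA$-rank $r+ni$ for some $i\geq 0$. Such a bundle $\cE$ gives a degree $r+ni$ topological Azumaya algebra $\cEnd_{\cA}(\cE)$ Morita equivalent to $\cA$. Hence $\ind(\alpha^{\topo})\mid r+ni$, and since also $\ind(\alpha^{\topo})\mid n$, we get $\ind(\alpha^{\topo})\mid r$.

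\emph{Step 1: local resolutions.} Fix a Stein compact (or relatively compact Stein open) $K\subset S$; enlarging $K$ exhausts $S$. On a neighbourhood $V$ of $K$, Cartan's Theorem A gives finitely many sections generating $\cF$, hence a surjection of right $\cA$-modules $\cA^{\oplus N_0}|_V\to\cF|_V$. Its kernel is again a coherent right $\cA$-module, so the process can be iterated. Since $S$ is a manifold of dimension $d$, the $d$-th syzygy is locally free as an $\cO_S$-module. By Morita theory, a right $\cA$-module that is $\cO$-locally free is an $\cA$-bundle. This yields a finite resolution
\[
0\to\cE_d\to\cdots\to\cE_0\to\cF|_V\to 0
\]
by holomorphic $\cA$-bundles on $V$. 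Passing to underlying topological objects, $x_K:=\sum_j(-1)^j[\cE_j]\in K(K,\cA)$ has $\cA$-rank $r$: rank is additive, and it can be computed at a generic point where $\cF$ has $\cO$-rank $rn$.

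\emph{Step 2: from a K-class to a bundle.} By Lemma \ref{lemtwistedKth}, $x_K$ corresponds to a section of $\BU_{(K,\cA),r}\to K$, which factors through $\BU_{(K,\cA)}(r+ni)$ for some $i$ by compactness. Replacing $i$ by a larger value, we may assume $2(r+ni)\geq d+1$. The maps $\BU(m)\to\BU$ are $(2m+1)$-connected and $K$ has covering dimension at most $2d$ (homotopy dimension at most $d$). So the stabilization does not affect which sections exist on $K$, and we get an honest $\cA$-bundle of $\cA$-rank $m:=r+ni$ on $K$. Thus $\ind(\alpha^{\topo}|_K)\mid r$ for every compact $K$.

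\emph{Step 3: globalizing (main obstacle).} This is the step I expect to be hardest: passing from all compacts $K$ to $S$ itself, since $\cF$ need not have a global resolution. My first attempt is to fix one large $m\equiv r\pmod n$ with $2m>d+1$ and build a section of $\BU_{(S,\cA)}(m)\to S$ by obstruction theory over a CW structure on $S$ adapted to an exhaustion (Hamm's theorem gives homotopy dimension $\leq d$). The data from Step 1 supply sections over each compact stage. To obtain compatible sections over an exhaustion $K_1\subset K_2\subset\cdots$, I will control the inverse system of homotopy classes of sections: it is a system of nonempty sets, and I would try to show it is Mittag-Leffler, in which case its inverse limit is nonempty. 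Here the stable range should help, because differences of sections are measured by untwisted $K^0$-classes of compacts, which have finitely generated, controllable restriction maps. If this fails, the fallback is to show that for a finite-dimensional CW complex the index is already detected on a suitable finite subcomplex. That would mimic the reduction in Theorem \ref{th2+}, factoring the classifying map through a finite skeleton of $\BPGL_n(\C)$ (which has finitely many cells in each dimension), and would then let me apply Step 2 on a single large compact.
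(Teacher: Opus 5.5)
There is a genuine gap. It shows up in Step 3 but originates in Step 1: you build a class $x_K$ on each compact separately and never compare $x_{K'}|_K$ with $x_K$ for $K\subset K'$. Without that comparison, neither of your globalization strategies goes through.

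\emph{The Mittag-Leffler route.} The set $K(K,\cA)_r$ is a torsor under the \emph{twisted} group $K(K,\cA)_0$, not under untwisted $K^0$. Inverse systems of finitely generated abelian groups are often not Mittag-Leffler (e.g.\ $\cdots\xrightarrow{2}\Z\xrightarrow{2}\Z$). A nonzero $\lim^1$ is exactly what allows an inverse system of nonempty torsors to have empty limit, so nonemptiness at each stage proves nothing.

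\emph{The fallback.} Pulling $\cA$ back from a finite skeleton $Y$ of $\BPGL_n(\C)$ only shows that $\ind(\alpha^{\topo})$ divides the index of the universal class on $Y$. That index has nothing to do with $\cF$ and need not divide $r$. Moreover, $Y$ is not a compact subset of $S$, so Step 2 cannot be run on it.

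The missing idea, which is how the paper argues, is that the classes are compatible. Take Stein neighbourhoods $V\subset V'$ of $K\subset K'$ and your resolutions $\cE'_\bullet\to\cF|_{V'}$ and $\cE_\bullet\to\cF|_V$.
\begin{enumerate}
\item Since $V$ is Stein and $\cE'_j$ is a free $\cA$-module for $j<d$, Theorem B lets you lift $\mathrm{id}_{\cF|_V}$ step by step to a chain map $\cE'_\bullet|_V\to\cE_\bullet$. The last component factors automatically through $\cE_d=\ker(\cE_{d-1}\to\cE_{d-2})$.
\item The mapping cone of this chain map is a bounded exact complex of $\cA$-bundles, hence $x_{K'}|_K=x_K$.
\item Exhaust $S$ by finite subcomplexes $S_i$ and set $\cB:=\cA\otimes_{\cO_S}\cC_S$. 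By Lemma \ref{lemtwistedKth}, the corresponding sections $\sigma_i$ of $\BU_{(S,\cB),r}$ over $S_i$ satisfy $\sigma_{i+1}|_{S_i}\simeq\sigma_i$.
\item The homotopy extension property for $S_i\subset S_{i+1}$ lets you modify the $\sigma_i$ inductively so they are strictly compatible, which yields a global section. No Mittag-Leffler condition is needed, because you already hold an element of the inverse limit.
\item Only then destabilize, once and globally. For $r+ni\geq d$ the fiber of $\BU(r+ni)\to\BU$ is $2d$-connected, and $S$ is a $2d$-dimensional manifold. So obstruction theory lifts the section to $\BU_{(S,\cB)}(r+ni)$.
\end{enumerate}
Your ordering (destabilizing on each compact in Step 2, then gluing actual bundles) would additionally require compatibility of unstable sections, which is harder. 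Gluing in the stable range first avoids that issue.
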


\begin{proof}
Let $d$ be the complex dimension of $S$. Fix $s\in S$.
Let $\rho:S\to\R$ be a $\ci$ strictly plurisubharmonic exhaustion function on $S$ (see \cite[Theorem 5.1.6]{Hormander}). Use Sard's theorem to choose a strictly increasing sequence $(c_i)_{i\geq 1}$ of regular values of $\rho$ tending to infinity. Set $S_i:=\{x\in S\mid \rho(x)\leq c_i\}$. For all $c\in\R$, the complex manifold~$\{x\in S\mid\rho(x)<c\}$ is Stein (apply \cite[Theorem 5.2.10]{Hormander} to $x\mapsto\frac{1}{c-\rho(x)}$). It follows that the compact subset $S_i\subset S$ is Stein in the sense that it admits a basis of Stein open neighborhoods. Using \cite[Theorem 10.6]{Munkres}, one can endow~$S$ with a structure of CW complex such that the $S_i$ are finite subcomplexes. Replacing $S_i$ with its connected component containing $s$, we may assume that $S_i$ is connected.

Fix $i\geq 1$. As $S_i$ is a Stein compact set and $\cF$ is coherent, there is a resolution 
\begin{equation}
\label{resSi}
0\to\cE^i_d\to\dots\to\cE^i_0\to\cF|_{U_i}\to 0
\end{equation}
of $\cF|_{U_i}$, on a Stein open neighborhood $U_i$ of $S_i$, by $\cA|_{U_i}$-bundles (right $\cA|_{U_i}$\nobreakdash-mod\-ules that are locally free of finite rank as $\cO_{U_i}$-modules); one can also choose~$\cE^i_0,\dots,\cE^i_{d-1}$ to be free right $\cA|_{U_i}$\nobreakdash-mod\-ules. Using that~$\cF$ is generated (as an~$\cO_{S}$\nobreakdash-mod\-ule hence also as an~$\cA$-module) by finitely many global sections over some $U_i$, one constructs the morphism~${\cE^i_0\to\cF|_{U_i}}$. Applying the same argument to the kernel of~${\cE^i_0\to\cF|_{U_i}}$ and iterating (possibly shrinking~$U_i$), one constructs~$\cE^i_1,\dots,\cE^i_{d-1}$. The~$\cA|_{U_i}$\nobreakdash-module~$\cE^i_d$ chosen so \eqref{resSi} is exact is a locally free $\cO_{U_i}$-module by Hilbert's syzygy theorem. Shrinking the $U_i$, one can moreover ensure that~${U_i\subset U_{i+1}}$.

 As $U_i$ is Stein, one can lift the identity of $\cF|_{U_i}$ to a morphism ${f^i:\cE^{i+1}_{\bullet}|_{U_i}\to\cE^i_{\bullet}}$ of complexes of~$\cA|_{U_i}$\nobreakdash-bundles (choose first $f_0^i$, then $f^i_1$, etc.). The mapping cone~$C(f^i)_{\bullet}$ of~$f^i$ is then an exact complex of $\cA|_{U_i}$-bundles with~${C(f^i)_j=\cE^{i+1}_{j-1}|_{U_i}\oplus \cE^i_j}$. 
 
Consider the topological Azumaya algebra $\cB:=\cA\otimes_{\cO_S}\cC_S$ on $S$ and let $\cB_i$ be its restriction to $S_i$. The element $\kappa_i:=\sum_{j=0}^d (-1)^j[(\cE^i_j\otimes_{\cO_{U_i}}\cC_{U_i})|_{S_i}]$ of $K(S_i,\cB_i)$ (see~\S\ref{partwistedKth}) has $\cB_i$-rank $r$. The complex $(C(f^i)_{\bullet}\otimes_{\cO_{U_i}}\cC_{U_i})|_{S_i}$ shows that $\kappa_{i+1}|_{S_i}=\kappa_i$.

The class $\kappa_i$ corresponds to a continuous section $\sigma_i:S_i\to\BU_{(S_i,\cB_i),r}$, well defined up to homotopy, of $\BU_{(S,\cB),r}\to S$ over $S_i$ (see Lemma \ref{lemtwistedKth}). As~${\kappa_{i+1}|_{S_i}=\kappa_i}$, the section~$\sigma_{i+1}|_{S_i}$ is homotopic to~$\sigma_i$. For all $i\geq 1$, one can modify $\sigma_{i+1}$ to ensure that~$\sigma_{i+1}|_{S_i}=\sigma_i$ (to see it, note that $\sigma_{i+1}$ and the homotopy yield a section of~$\BU_{(S,\cB)}(r)\times [0,1]\to S\times [0,1]$ over $S_{i+1}\times\{0\}\cup S_i\times [0,1]$, which one can extend to $S_{i+1}\times [0,1]$ because the inclusion $S_{i+1}\times\{0\}\cup S_i\times [0,1]\subset S_{i+1}\times [0,1]$ is a homotopy equivalence). 
Doing so for $i=1$, then $i=2$, etc., ensures that the $\sigma_i$ are compatible. This yields a section $\sigma:S\to \BU_{(S,\cB),r}$ of~$\BU_{(S,\cB),r}\to S$ over $S$.

Choose $i\geq 0$ such that $r+ni\geq d$. Then the homotopy fiber $F$ of the inclusion~$\BU(r+ni)\to\BU$ has the property that $\pi_k(F)=0$ for $0<k\leq 2d$ (this follows from the Hurewicz theorem in the form \cite[Theorem 4.27]{Hatcher}). Consider the problem of finding a section $\tau$ of $\BU_{(S,\cB)}(r+ni)\to S$ that is homotopic to~$\sigma$ as a section of $\BU_{(S,\cB),r}\to S$. Obstruction theory provides successive obstructions to this problem, that live in $H^{k+1}(S,\underline{\pi}_k(F))$, where~$\underline{\pi}_k(F)$ is a local system on $S$ with fiber~$\pi_k(F)$ (see \eg \cite[Theorem 34.2]{Steenrod}). These groups vanish if $k\leq 2d$ because~$\pi_k(F)=0$ and if $k\geq 2d$ because $S$ is a $2d$-dimensional~$\ci$ manifold. It follows that such a section~$\tau:S\to \BU_{(S,\cB)}(r+ni)$ exists (see \cite[Corollary~34.4]{Steenrod}).

The $\cB$-bundle $\tau^*(\cU_{(S,\cB)}(r+ni))$ on $S$ has $\cB$-rank $r+ni$ (in the notation of~\S\ref{partwistedKth}). Let $\beta\in\Br_{\topo}(S)$ be the class of $\cB$. By \cite[Theorem 1.3.7]{Caldararu}, there exists a~$\beta$\nobreakdash-twisted sheaf of rank $r+ni$ on $S$, so that $\ind(\beta)\mid r+ni$. As clearly $\ind(\beta)\mid n$, it follows that $\ind(\beta)\mid r$. The proof of Theorem \ref{th10+} now shows that~$\ind(\alpha)\mid r$.
\end{proof}

\begin{thm}
\label{th9+}
Let $S$ be a connected Stein manifold. Fix $\alpha\in \Br(\cO(S))$ and let~$\alpha_{\cM(S)}$ be its image in $\Br(\cM(S))$. Then $\ind(\alpha)=\ind(\alpha_{\cM(S)})$.
\end{thm}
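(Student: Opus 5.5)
The divisibility $\ind(\alpha_{\cM(S)})\mid\ind(\alpha)$ is immediate: an Azumaya algebra of degree $m$ over $\cO(S)$ representing $\alpha$ base changes to a central simple algebra of degree $m$ over the field $\cM(S)$ representing $\alpha_{\cM(S)}$. Here $S$ is connected and nonsingular, hence irreducible, so $\cM(S)$ is a field. The work is in the converse divisibility $\ind(\alpha)\mid\ind(\alpha_{\cM(S)})$, and the plan is to reduce it to Proposition \ref{proptwistedresolution}.

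Fix an Azumaya algebra $A$ over $\cO(S)$ of degree $n$ representing $\alpha$, and let $\cA:=A^{\an}$, a holomorphic Azumaya algebra of degree $n$ on $S$ whose class is $\alpha^{\an}$. Set $A_{\cM}:=A\otimes_{\cO(S)}\cM(S)$, a central simple algebra of degree $n$. By Wedderburn, $A_{\cM}\simeq M_k(D)$ with $D$ a division algebra of degree $e=\ind(\alpha_{\cM(S)})$ and $n=ke$. A simple right $A_{\cM}$-module $V$ has $\cM(S)$-dimension $ne$.

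Write $V=eA_{\cM}$ for an idempotent $e\in A_{\cM}$. Clearing denominators, choose a finitely generated right $A$-submodule $F\subset A_{\cM}$, for instance $F=xA$ with $x=ge\in A$ for a suitable nonzerodivisor $g\in\cO(S)$, such that $F\otimes\cM(S)=V$. This $F$ is a finitely presented $\cO(S)$-module: it is the image of $A\to A$, $y\mapsto xy$, whose kernel is again finitely generated by the equivalence \eqref{monoidaleq}, since the category of globally finitely presented coherent sheaves is closed under kernels of maps between free modules (Lemma \ref{lemresolutions}-type arguments). Its analytification $\cF:=F^{\an}$ is then a coherent right $\cA$-module. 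Its generic rank as an $\cO_S$-module equals $\dim_{\cM(S)}V=ne$, because analytification is exact and $\cM(S)$ is the function field of the irreducible space $S$, so the generic rank can be read off after tensoring with $\cM(S)$. Proposition \ref{proptwistedresolution}, applied with $r=e$, gives $\ind(\alpha^{\an})\mid e$. Theorem \ref{th10+} gives $\ind(\alpha)=\ind(\alpha^{\an})$, which concludes.

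The step needing the most care is the construction of the coherent module $\cF$ and the verification that its generic rank is $ne$. This requires checking that taking the image inside $A$ is compatible with analytification, and that the generic rank of $F^{\an}$ agrees with $\dim_{\cM(S)}(F\otimes\cM(S))$. For the latter, I would use that $F$ embeds in the free module $A$: then $F^{\an}\subset\cA$, and on the dense open set where $g$ and the relevant minors are invertible, $F^{\an}$ is locally free of the rank computed over the generic point. Alternatively, one can avoid the kernel issue by taking $\cF$ to be the image sheaf of $\cA\xrightarrow{x\cdot}\cA$. This image is coherent automatically as the image of a morphism of coherent sheaves, and its generic rank equals the rank of left multiplication by $x$ over $\cM(S)$, which is $\dim V=ne$. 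I would adopt this second version as it sidesteps finite presentation entirely.
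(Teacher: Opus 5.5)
Your proof is correct. Its final step, Proposition \ref{proptwistedresolution} followed by Theorem \ref{th10+}, is exactly the paper's. The difference is in how you produce the coherent right $\cA$-module of generic rank $n\cdot\ind(\alpha_{\cM(S)})$.

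The paper's route is the standard twisted-sheaf mechanism:
\begin{enumerate}
\item It starts from a locally free $\alpha_{\cM(S)}$-twisted sheaf of rank $r$.
\item It extends this to a finitely presented sheaf on a $\G_m$-gerbe $\kX\to\Spec(\cO(S))$, using Rydh's extension results.
\item It isolates an $\alpha$-twisted summand via Lieblich's weight decomposition.
\item It turns that summand into a finitely presented right $A$-module via C\u{a}ld\u{a}raru's Morita equivalence.
\item Only then does it analytify.
\end{enumerate}

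You do the extension by hand. Wedderburn gives a simple right ideal $eA_{\cM}$ of the right dimension. Clearing denominators gives $x\in A$ with $xA_{\cM}=eA_{\cM}$. The image sheaf $x\cA\subset\cA$ is then a coherent right $\cA$-module for free, as the image of a morphism of coherent sheaves. Its generic rank is the $\cM(S)$-rank of left multiplication by $x$.

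Your route is more elementary: no stacks, no Rydh, no C\u{a}ld\u{a}raru, and no finite presentation over $\cO(S)$. It also shows that the module can be taken to be a right ideal sheaf of $\cA$ itself.

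You use one fact silently and should state it: $\cM(S)$ is the fraction field of $\cO(S)$, since every meromorphic function on a Stein space is a quotient of holomorphic functions. This is what makes clearing denominators legitimate. The paper relies on the same fact when it passes to the rings $\cO(S)[\frac{1}{a}]$.

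Two minor remarks. First, in your first version the claim that $F=xA$ is finitely presented is not justified as written. Lemma \ref{lemresolutions} needs a uniform bound on local presentations of the kernel, which you have not established. Your second version makes this irrelevant, since Proposition \ref{proptwistedresolution} only needs coherence. Second, you use the letter $e$ both for the index and for the idempotent.
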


\begin{proof}
It is obvious that $\ind(\alpha_{\cM(S)})\mid \ind(\alpha)$ and we prove the converse divisibility. 

Let $\pi:\kX\to\Spec(\cO(S))$ be a $\G_m$-gerbe representing~$\alpha$ (see \S\ref{partwisted}). Let~$\cG$ be a locally free~$\alpha_{\cM(S)}$\nobreakdash-twisted sheaf of rank $r$. Extend $\cG$ to a quasi-coherent sheaf of finite presentation $\cH$ on~$\kX$ (combine \cite[Lemma~4.3~(C2)$\Rightarrow$(E2)]{Rydh1} and \cite[Corollary B]{Rydh2}). Replacing~$\cH$ with one summand of the decomposition \cite[Proposition~2.2.1.6]{Lieblichmoduli}, we may ensure that $\cH$ is an $\alpha$-twisted sheaf on $\Spec(\cO(S))$.

Let $\cA$ be a degree $n$ Azumaya algebra on $\Spec(\cO(S))$ representing $\alpha$. Using \cite[Theorem 1.3.7]{Caldararu},
%See also [van Bree, Virasoro constraints and moduli of twisted sheaves, Lemma 3.1.44].
one sees that there exists a finitely presented quasi-coherent sheaf $\cF$ on $\Spec(\cO(S))$ with a structure of right $\cA$-module such that $\cF_{\cM(S)}$ has rank $rn$ (as a module over $\cM(S)$).
Let $\cA^{\an}$ and $\cF^{\an}$ be the analytifications of $\cA$ and $\cF$ in the sense of \S\ref{paranalytification}, so~$\cA^{\an}$ is a holomorphic Azumaya algebra of degree~$n$ on $S$ and $\cF^{\an}$ is a coherent right~$\cA^{\an}$\nobreakdash-module on $S$ that is generically of rank~$rn$ (as an $\cO_S$-module). It follows from Proposition \ref{proptwistedresolution} that $\ind(\alpha^{\an})\mid r$, hence, by Theorem \ref{th10+}, that~${\ind(\alpha)\mid r}$. This completes the proof of the theorem.
\end{proof}

\subsection{Period versus index}

\begin{thm}
\label{th8+}
There exist a connected Stein manifold $S$ and a class $\alpha\in\Br(\cM(S))$ in the image of
$\Br(\cO(S))\to\Br(\cM(S))$ such that $\ind(\alpha)\neq\per(\alpha)$.
\end{thm}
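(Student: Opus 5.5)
The plan is to reduce the statement to a purely topological period-index gap, using Theorems \ref{th10+} and \ref{th9+} together with Theorem \ref{lemStein}. We would start from a finite CW complex $T$ and a topological Brauer class $\beta\in\Br_{\topo}(T)$ with $\ind(\beta)\neq\per(\beta)$. The standard candidate, following Antieau--Williams \cite{AW1,AW2}, is a skeleton of $K(\Z/2,2)$, say of dimension $6$, or a suitable finite approximation of $\BPGL_2(\C)$. On such a skeleton the class $\beta$ coming from the tautological class via \eqref{PGLH2H3} has period $2$, while Antieau--Williams show that its index is divisible by $4$, by computing obstructions to lifting to $\BPGL_2(\C)$ in degree $\leq 6$. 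We would take $T$ to be the $6$-skeleton of a $K(\Z/2,2)$ with finitely many cells, as in the proof of Theorem \ref{th2+}. We would then invoke the Antieau--Williams computation that the restriction of the canonical class $\partial(\beta_0)$ to this skeleton has period $2$ and index $4$. If needed, we would pass to a finite CW complex of the same homotopy type as a smooth affine variety, but that is unnecessary here.

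Next, Theorem \ref{lemStein} provides a connected Stein manifold $S$ of dimension $6$ homotopy equivalent to $T$. We would take $T$ connected and use that $T$ is a countable CW complex. By Theorem \ref{cor1+}, the class $\beta$ transported to $H^3(S,\Z)_{\tors}\simeq\Br_{\topo}(S)$ comes from a unique $\alpha_0\in\Br(\cO(S))$, and $\per(\alpha_0)=\per(\beta)=2$. The index of a topological Brauer class is a homotopy invariant: Azumaya algebras correspond to maps to $\BPGL_n(\C)$, and these pull back along homotopy equivalences. Together with Theorem \ref{th10+}, this gives $\ind(\alpha_0)=\ind(\beta)=4$.

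Finally, we set $\alpha:=\alpha_0$ viewed in $\Br(\cM(S))$. By Corollary \ref{corinjOM}, the map $\Br(\cO(S))\to\Br(\cM(S))$ is injective, so $\per(\alpha)=\per(\alpha_0)=2$. By Theorem \ref{th9+} (here $S$ is a connected Stein manifold), $\ind(\alpha)=\ind(\alpha_0)=4$. Hence $\ind(\alpha)\neq\per(\alpha)$.

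The main obstacle is the topological input: producing a \emph{finite-dimensional} CW complex on which the period and index of a topological Brauer class genuinely differ. This is where we rely on Antieau--Williams, who show, for example, that on the $6$-skeleton of $K(\Z/2,2)$ the class of period $2$ has index divisible by $4$ because $\beta_0^2$ obstructs a degree-$2$ lift. We would cite their result directly, taking care that it applies to a finite skeleton of dimension compatible with Theorem \ref{lemStein}. All the remaining steps are formal consequences of results already established in the paper.
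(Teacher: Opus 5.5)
Your proposal is correct and is essentially the paper's proof: take a finite CW complex with a topological period--index gap due to Antieau--Williams, realize it up to homotopy by a connected Stein manifold via Theorem~\ref{lemStein}, lift the class to $\Br(\cO(S))$ by Theorem~\ref{cor1+}, and transport the index to $\Br(\cM(S))$ with Theorems~\ref{th10+} and~\ref{th9+}. The differences are cosmetic and harmless: you use Corollary~\ref{corinjOM} to get $\per(\alpha)=2$ exactly, whereas the paper only needs $\per(\alpha)\mid 2$; and the example cited from \cite[Theorem~B]{AW2} has period $2$ and index $8$, not $4$ as you state, which does not matter because the argument only uses $4\mid\ind$.
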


\begin{proof}
Let $T$ be a connected finite CW complex carrying a topological Brauer class whose period and index differ (say, period $2$ and index $8$ as in \cite[Theorem~B]{AW2}). Use Theorem \ref{lemStein} to find a connected Stein manifold $S$ with the homotopy type of~$T$, so there exists~$\gamma\in \Br_{\topo}(S)[2]$ with $\ind(\gamma)=8$. Let $\beta\in\Br(\cO(S))[2]$ be such that~$\beta^{\topo}=\gamma$ (see Theorem \ref{cor1+}). Let $\alpha\in \Br(\cM(S))[2]$ be the image of~$\beta$. Applying Theorems \ref{th9+} and \ref{th10+} shows that ${\ind(\alpha)=\ind(\beta)=\ind(\gamma)=8}$.
\end{proof}

\bibliographystyle{myamsalpha}
\bibliography{BrauerStein}

\end{document}